\numberwithin{equation}{section}
\theoremstyle{plain}
\newtheorem{theorem}{Theorem}[section]
\newtheorem{proposition}{Proposition}[section]
\newtheorem{lemma}{Lemma}[section]
\newtheorem{corollary}{Corollary}[section]
\theoremstyle{definition}
\newtheorem{definition}{Definition}[section]
\newtheorem{remark}{Remark}[section]
\def\now{%
\minute=\time%
\hour=\time \divide \hour by 60%
\hourMins=\hour \multiply\hourMins by 60%
\advance\minute by -\hourMins%
\zeroPadTwo{\the\hour}:\zeroPadTwo{\the\minute}%
}
\def\zeroPadTwo#1{\ifnum #1<10 0\fi#1}
\renewcommand{\cite}{\citet}
\def\^#1{\ifmmode {\mathaccent"705E #1} \else {\accent94 #1} \fi}
\def\~#1{\ifmmode {\mathaccent"707E #1} \else {\accent"7E #1} \fi}
\def\*#1{#1^\ast}
\edef\-#1{\noexpand\ifmmode {\noexpand\bar{#1}} \noexpand\else \-#1\noexpand\fi}
\def\>#1{\vec{#1}}
\def\.#1{\dot{#1}}
\def\wt#1{\widetilde{#1}}
\def\atop{\@@atop}
\def\*#1{\mathscr{#1}}
\renewcommand{\leq}{\leqslant}
\renewcommand{\geq}{\geqslant}
\newcommand{\eq}{\eqref}
\newcommand{\IE}{\mathbbm{E}}
\newcommand{\Var}{\mathop{\mathrm{Var}}\nolimits}
\newcommand{\Cov}{\mathop{\mathrm{Cov}}}
\def\be#1{\begin{equation*}#1\end{equation*}}
\def\ben#1{\begin{equation}#1\end{equation}}
\def\bes#1{\begin{equation*}\begin{split}#1\end{split}\end{equation*}}
\def\besn#1{\begin{equation}\begin{split}#1\end{split}\end{equation}}
\def\ba#1{\begin{align*}#1\end{align*}}
\newcommand{\mcl}[1]{\mathcal{#1}}
\def\norm#1{\Vert#1\Vert}
\def\mid{\vert}
\def\beqn#1\eeqn{\begin{align}#1\end{align}}
\def\beq#1\eeq{\begin{align*}#1\end{align*}}
\def\E{{\IE}}
\newcommand{\ul}[1]{\underline{#1}}
\newcommand{\ol}[1]{\overline{#1}}
\renewcommand\section{\@startsection {section}{1}{\z@}%
{-3.5ex \@plus -1ex \@minus -.2ex}%
{1.3ex \@plus.2ex}%
{\center\small\sc\mathversion{bold}\MakeUppercase}}
\def\subsection#1{\@startsection {subsection}{2}{0pt}%
{-3.5ex \@plus -1ex \@minus -.2ex}%
{1ex \@plus.2ex}%
{\bf\mathversion{bold}}{#1}}
\def\subsubsection#1{\@startsection{subsubsection}{3}{0pt}%
{\medskipamount}%
{-10pt}%
{\normalsize\itshape}{\kern-2.2ex. #1.}}
\def\blfootnote{\xdef\@thefnmark{}\@footnotetext}
\begin{document}

\title{High-dimensional Central Limit Theorems by Stein's Method}
\author{Xiao Fang and Yuta Koike}
\date{\it The Chinese University of Hong Kong and The University of Tokyo} 
\maketitle

\noindent{\bf Abstract:} 
We obtain explicit error bounds for the $d$-dimensional normal approximation on hyperrectangles for a random vector that has a Stein kernel, or admits an exchangeable pair coupling, or is a non-linear statistic of independent random variables or a sum of $n$ locally dependent random vectors.
We assume the approximating normal distribution has a non-singular covariance matrix.
The error bounds vanish even when the dimension $d$ is much larger than the sample size $n$.
We prove our main results using the approach of G\"otze (1991) in Stein's method, together with modifications of an estimate of Anderson, Hall and Titterington (1998) and a smoothing inequality of Bhattacharya and Rao (1976). 
For sums of $n$ independent and identically distributed isotropic random vectors having a log-concave density, we obtain an error bound that is optimal up to a $\log n$ factor.
We also discuss an application to multiple Wiener-It\^{o}integrals.

\medskip

\noindent{\bf AMS 2010 subject classification: }  60F05, 62E17

\noindent{\bf Keywords and phrases:}  Central limit theorem, exchangeable pairs, high dimensions, local dependence, multiple Wiener-It\^{o}integrals, non-linear statistic, Stein kernel, Stein's method.

\section{Introduction and Main Results}\label{sec1}

Motivated by modern statistical applications in large-scale data, there has been a recent wave of interest in proving high-dimensional central limit theorems. 
Starting from the pioneering work by \cite{CCK13}, who established a Gaussian approximation for maxima of sums of centered independent random vectors, many articles have been devoted to the development of this subject: For example, see \cite{CCK17,CCKK19} for generalization to normal approximation on hyperrectangles and improvements of the error bound, \cite{Ch18,ChKa19,SoChKa19} for extensions to $U$-statistics, \cite{CCK19,ZhCh18,ZhWu17} for sums of dependent random vectors, and \cite{BCCHK18} for a general survey and statistical applications.
In particular, for $W=n^{-1/2}\sum_{i=1}^n X_i$ where $\{X_1,\dots, X_n\}$ are centered independent random vectors in $\mathbb{R}^d$ and satisfy certain regularity conditions, \cite{CCKK19} proved that
\ben{\label{f11}
\sup_{h=1_A: A\in \mathcal{R}} |E h(W)- E h(Z)|\leq C_0\left(\frac{\log^5(dn)}{n} \right)^{1/4},
}
where $\mathcal{R}:=\{\Pi_{j=1}^d (a_j, b_j), -\infty\leq a_j\leq b_j\leq \infty\}$, $Z$ is a centered Gaussian vector with the same covariance matrix as $W$ and $C_0$ is a positive constant that is independent of $d$ and $n$. 

The distance between two probability measures on $\mathbb{R}^d$ considered in \eq{f11} is stronger than the multivariate Kolmogorov distance. The error bound vanishes if $\log d=o(n^{1/5})$,
which allows $d$ to be much larger than $n$. The result in \eq{f11} is useful in many statistical applications in high-dimensional inference such as construction of simultaneous confidence intervals and strong control of family-wise error rate in multiple testing; see \cite{BCCHK18} for details. 
In the literature, people have also considered bounding other (stronger) distances in multivariate normal approximations. However, they typically require $d$ to be sub-linear in $n$. 
We discuss some of the recent results in Section \ref{sec1.1} below.


To date, the proofs for results such as \eq{f11} in the literature all involve smoothing the maximum function $\max_{1\leq j\leq d} x_j$ by $\frac{1}{\beta}\log \sum_{j=1}^d e^{\beta x_j}$ for a large $\beta$ (cf. Theorem 1.3 of \cite{Ch05}).
In this paper, we use a new method to prove high-dimensional normal approximations on hyperrectangles.
We assume the approximating normal distribution has a non-singular covariance matrix.
Our method combines the approach of \cite{Go91} in Stein's method with modifications of an estimate of \cite{AnHaTi98} and a smoothing inequality of \cite{BhRa76}. 
We improve the bound in \eq{f11} to $C_0\big(\frac{\log^4 (d n) }{n}\big)^{1/3}$ when the smallest eigenvalue of $\Cov(W)$ is bounded away from 0 by a constant independent of $d$ and $n$ (cf. Corollary~\ref{c2} below). 
We further improve the bound to $C_0\big(\frac{\log^3 d}{n}\big)^{1/2}\log n$, which is optimal up to the $\log n$ factor, for sums of independent and identically distributed (i.i.d.) isotropic random vectors with log-concave distributions (cf. Corollary~\ref{c0} below).
Moreover, our method works for general dependent random vectors and we state our main results for $W$ that has a Stein kernel, or admits an exchangeable pair coupling, or is a non-linear statistic of independent random variables or a sum of locally dependent random vectors. 
We prove our main results in Section~\ref{sec2}. We also discuss an application to multiple Wiener-It\^{o}integrals. 
Some details are deferred to an appendix.

Throughout the paper, we always assume $d\geq3$ so that $\log d>1$. 
Also, $W$ denotes a random vector in $\mathbb{R}^d$ with $E W=0$. 
We use $Z\sim N(0, \Sigma)$ to denote a $d$-dimensional Gaussian variable with covariance matrix $\Sigma=(\Sigma_{jk})_{1\leq j, k\leq d}$ and denote
\besn{\label{eq:sigma}
&\overline \sigma^2:=\overline \sigma^2(\Sigma)=\max_{1\leq j\leq d}\Sigma_{jj}, \\
& \underline \sigma^2:=\underline \sigma^2(\Sigma)=\min_{1\leq j\leq d}\Sigma_{jj}, \\
&\sigma_*^2:=\sigma_*^2(\Sigma)=\text{smallest eigenvalue of $\Sigma$}.
} 
Note that in the isotropic case $\Sigma=I_d$, $\overline \sigma^2=\underline \sigma^2=\sigma_*^2=1$.
We use $C$ to denote positive absolute constants, which may differ in different expressions.
We use $\partial_j f, \partial_{jk} f,$ etc to denote partial derivatives. 
For an $\mathbb{R}^d$-vector $w$, we use $w_j, 1\leq j\leq d$ to denote its components and write $\|w\|_\infty=\max_{1\leq j\leq d}|w_j|$.

We first consider random vectors that have a Stein kernel, which was defined in \cite{LeNoPe15} and used implicitly in, for example, \cite{Ch09} and \cite{NoPe09} (see also Lecture VI of \cite{St86}). 

\begin{definition}[Stein kernel]
A $d\times d$ matrix-valued measurable function $\tau^W=(\tau^W_{ij})_{1\leq i,j\leq d}$ on $\mathbb{R}^d$ is called a \textit{Stein kernel} for (the law of) $W$ if $E|\tau^W_{ij}(W)|<\infty$ for any $i,j\in\{1,\dots,d\}$ and 
\begin{equation*}
\sum_{j=1}^dE[\partial_jf(W)W_j]=\sum_{i,j=1}^dE[\partial_{ij}f(W)\tau^W_{ij}(W)]
\end{equation*}
for any $C^\infty$ function $f:\mathbb{R}^d\to\mathbb R$ with bounded partial derivatives of all orders.
\end{definition}

If $W$ has a Stein kernel, then in applying Stein's method, we only need to deal with the first and second derivatives of the solution to the Stein equation. In this case, we obtain the following simple bound.

\begin{theorem}[Error bound using Stein kernels]\label{t1}
Suppose that $W$ has a Stein kernel $\tau^W=(\tau^W_{jk})_{1\leq j,k\leq d}$. 
Let $Z\sim N(0, \Sigma)$.
Then we have
\ben{\label{f12}
\sup_{h=1_A: A\in \mathcal{R}} |E h(W)- E h(Z)|
\leq C \frac{\Delta_W}{\sigma_*^2} (\log d)(|\log(\frac{\underline \sigma \Delta_W}{\overline \sigma \sigma_*^2})|\vee 1),
}
where the $\sigma$'s are defined in \eq{eq:sigma} and 
\[
\Delta_W:= E\left[\max_{1\leq j,k\leq d}|\Sigma_{jk}-\tau^W_{jk}(W)|\right].
\]
\end{theorem}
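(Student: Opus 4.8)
The plan is to pass from the rectangle indicator to a Gaussian‑smoothed test function via the smoothing inequality of \cite{BhRa76}, to bound the resulting smooth functional by Stein's method through the Stein kernel, using G\"otze's Ornstein--Uhlenbeck representation of the Stein solution together with a dimension‑robust estimate (a modification of \cite{AnHaTi98}) on the $\ell_1$-norm of the Hessian, in the mean, of the Gaussian measure of a rectangle, and finally to optimise over the smoothing scale.

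\emph{Smoothing and the Stein kernel identity.} Fix $A\in\mathcal R$ and $\delta>0$; for a rectangle $A'$ set $g_{A'}:=1_{A'}*\phi_{\delta^2\Sigma}$, where $\phi_C$ is the $N(0,C)$ density, so that $g_{A'}\in C^\infty$, $0\leq g_{A'}\leq1$, with all partial derivatives bounded. The smoothing inequality of \cite{BhRa76} with kernel $\phi_{\delta^2\Sigma}$ gives, uniformly in $A$,
\[
\babs{\P(W\in A)-\P(Z\in A)}\leq C\sup_{A'\in\mathcal R}\babs{Eg_{A'}(W)-Eg_{A'}(Z)}+C\sup_{A'\in\mathcal R}\P\bklr{Z\in(\partial A')^{\rho}},
\]
where $(\partial A')^\rho$ is the $\rho$-neighbourhood of $\partial A'$ and $\rho$ is a concentration radius of $\phi_{\delta^2\Sigma}$, of order $\delta\overline\sigma\sqrt{\log d}$; a Nazarov-type anti-concentration bound controls the last term by $C\rho\sqrt{\log d}/\underline\sigma\leq C\delta\overline\sigma(\log d)/\underline\sigma$. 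For each $A'$ let $f=f_{A'}$ solve $\sum_{j,k}\Sigma_{jk}\partial_{jk}f(w)-w\cdot\nabla f(w)=g_{A'}(w)-Eg_{A'}(Z)$, given by $f(w)=-\tfrac12\int_0^1 s^{-1}\bkle{Eg_{A'}(\sqrt s\,w+\sqrt{1-s}\,Z)-Eg_{A'}(Z)}\,ds$; since $g_{A'}$ is smooth with bounded derivatives, so is $f$, and evaluating the Stein equation at $W$, taking expectations, and invoking the defining property of $\tau^W$ yields
\[
Eg_{A'}(W)-Eg_{A'}(Z)=E\bbkle{\sum_{j,k}\bklr{\Sigma_{jk}-\tau^W_{jk}(W)}\partial_{jk}f(W)},
\]
so that, by H\"older's inequality, $\babs{Eg_{A'}(W)-Eg_{A'}(Z)}\leq\Delta_W\,\sup_w\sum_{j,k}\babs{\partial_{jk}f(w)}$.

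\emph{The Hessian bound.} Differentiating the representation of $f$ twice in $w$ — the factor $s$ produced by the chain rule cancels $s^{-1}$ — gives $\partial_{jk}f(w)=-\tfrac12\int_0^1\int g_{A'}(y)\,\partial_{jk}\phi_{(1-s)\Sigma}(y-\sqrt s\,w)\,dy\,ds$, and since $g_{A'}=1_{A'}*\phi_{\delta^2\Sigma}$ and $\phi_{\delta^2\Sigma}*\phi_{(1-s)\Sigma}=\phi_{(1-s+\delta^2)\Sigma}$, this equals
\[
\partial_{jk}f(w)=-\tfrac12\int_0^1\partial_{\mu_j}\partial_{\mu_k}\P\bklr{N(\mu,(1-s+\delta^2)\Sigma)\in A'}\Big|_{\mu=\sqrt s\,w}\,ds.
\]
The crucial estimate — where the modification of \cite{AnHaTi98} enters — is that for every rectangle $A'$, every $\mu\in\mathbb R^d$, and every positive-definite $B$,
\[
\sum_{j,k}\babs{\partial_{\mu_j}\partial_{\mu_k}\P\bklr{N(\mu,B)\in A'}}\leq\frac{C\log d}{\lambda_{\min}(B)};
\]
the point is that the product structure of $A'$ forces this $\ell_1$-norm to be of order $\log d$ rather than $d^2$ (in the isotropic case it reduces to a one-dimensional computation, using that $\prod_j(\Phi(b_j-\mu_j)-\Phi(a_j-\mu_j))$ is small precisely where the sum of the associated hazard-type ratios is large). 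Since $\lambda_{\min}((1-s+\delta^2)\Sigma)=(1-s+\delta^2)\sigma_*^2$, integrating in $s$ gives
\[
\sup_w\sum_{j,k}\babs{\partial_{jk}f(w)}\leq\frac{C\log d}{2\sigma_*^2}\int_0^1\frac{ds}{1-s+\delta^2}=\frac{C\log d}{2\sigma_*^2}\log\frac{1+\delta^2}{\delta^2}\leq\frac{C'\log d}{\sigma_*^2}\bklr{\babs{\log\delta}\vee1}.
\]

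\emph{Optimisation, and the main obstacle.} Combining the three displays,
\[
\babs{Eh(W)-Eh(Z)}\leq C\,\Delta_W\,\frac{\log d}{\sigma_*^2}\bklr{\babs{\log\delta}\vee1}+C\,\delta\,\frac{\overline\sigma\log d}{\underline\sigma},
\]
and choosing $\delta=\underline\sigma\Delta_W/(\overline\sigma\sigma_*^2)$ balances the two terms, producing the factor $\babs{\log(\underline\sigma\Delta_W/(\overline\sigma\sigma_*^2))}$ and hence \eqref{f12} (using $d\geq3$, so $\log d>1$); if this $\delta$ exceeds $1$, then $\Delta_W>\overline\sigma\sigma_*^2/\underline\sigma\geq\sigma_*^2$, so the right-hand side of \eqref{f12} already exceeds $\log d>1\geq\babs{Eh(W)-Eh(Z)}$ and there is nothing to prove. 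The main obstacle is the crucial estimate in the Hessian-bound step: establishing the $O(\log d/\lambda_{\min}(B))$ bound uniformly over all rectangles, means, and covariances — this is the genuinely delicate part, requiring the product structure to be exploited carefully, and it is where the original \cite{AnHaTi98} estimate must be adapted. Everything else — the smoothing reduction, the Stein kernel manipulation, the differentiation of the Ornstein--Uhlenbeck integral, and the scalar optimisation in $\delta$ — is routine, though the anti-concentration estimate for $Z$ must be carried out with enough care to produce the correct $\overline\sigma,\underline\sigma$-dependence inside the logarithm.
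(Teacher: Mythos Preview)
Your proposal is correct and follows essentially the same route as the paper: Gaussian smoothing via the Bhattacharya--Rao lemma, the Stein-kernel identity applied to the Ornstein--Uhlenbeck solution of the Stein equation, an $\ell_1$-Hessian bound of size $O(\sigma_*^{-2}\log d)$ per unit of ``time'' coming from the Anderson--Hall--Titterington estimate, and a final optimisation of the smoothing scale. The parametrisations differ (the paper works with the semigroup time $s\in[t,\infty)$ and the substitution $e^{-2s}$ rather than your $s\in[0,1]$), but the computations are the same.

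One point worth making explicit, since you flag it as ``the main obstacle'' without saying how to resolve it: your crucial estimate
\[
\sum_{j,k}\bigl|\partial_{\mu_j}\partial_{\mu_k}P\bigl(N(\mu,B)\in A'\bigr)\bigr|\leq\frac{C\log d}{\lambda_{\min}(B)}
\]
for \emph{general} positive-definite $B$ is not what the Anderson--Hall--Titterington argument gives directly --- that argument is tied to the product (isotropic) case. The paper's device is to split $\Sigma=(\Sigma-\sigma_*^2I_d)+\sigma_*^2I_d$ inside the representation of the Stein solution (writing $Z=\tilde Z+\sigma_*Z'$ with $Z'\sim N(0,I_d)$), so that the smoothing that actually gets differentiated is isotropic with variance $\sigma_*^2(1-e^{-2s})$; the non-isotropic remainder $\tilde Z$ is simply averaged out, using that the isotropic bound is uniform in the centre. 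The same splitting, applied to your $B=(1-s+\delta^2)\Sigma$, immediately yields your claimed estimate. Once you note this, your sketch is complete and matches the paper's proof.
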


\begin{remark}\label{rem1}
In practice, we typically choose $\Sigma=\Cov(W)$ (so that $E \tau^W_{jk}(W)=\Sigma_{jk}$), although it is not required in the above theorem.
Moreover, since
\be{
\sup_{h=1_A: A\in \mathcal{R}} |E h(W)- E h(Z)|=\sup_{h=1_A: A\in \mathcal{R}} |E h(MW)- E h(MZ)|
}
for any diagonal matrix $M$, we have the freedom to do component-wise scaling for $W$ so that the right-hand side of \eq{f12} is minimized.
This minimization problem seems non-trivial, except that one should obvious shrink each component of $W$ as much as possible for a given value of $\sigma_*$.
This remark applies to all the general bounds below (cf. Theorems \ref{t2}--\ref{ft5}).
For simplicity, in applications below (cf. Corollary \ref{c0}--\ref{c2}), we do the most natural component-wise scaling for $W$ so that $\Var(W_j)=1$, $1\leq j \leq d$ and choose $\Sigma=\Cov(W)$. As a result, $\overline \sigma=\underline \sigma=1$ and only $1/\sigma_*$ appears in the upper bound. 
This factor can be removed if $\sigma_*$ is bounded away from 0 by an absolute constant. 
We call it the \textit{strongly non-singular} case. One example is the isotropic case where $\Sigma=I_d$.
\end{remark}

\begin{remark}
\cite[Theorem 5.1]{CCKK19} proved\footnote{\eq{r06} is deduced from their result together with the proof of \cite[Corollary 5.1]{CCK17} and the Stein kernel for $(W^\top, -W^\top)^\top$.} that if $W$ has a Stein kernel $\tau^W=(\tau^W_{jk})_{1\leq j,k\leq d}$ and 
$Z\sim N(0,\Sigma)$ with the diagonal entries $\Sigma_{jj}\geq c$ for all $j=1,\dots, d$ and some constant $c>0$, then 
\ben{\label{r06}
\sup_{h=1_A: A\in \mathcal{R}} |P(W\in A)- P(Z\in A)|\leq C'  \Delta_W^{1/2} \log d,
}
where $C'$ depends only on $c$.
They also showed that the bound \eq{r06} is asymptotically sharp (personal communication). 
Theorem \ref{t1} shows that under the additional assumption that $\Sigma$ is non-singular and the ratio of the largest and the smallest diagonal entries of $\Sigma$, $\frac{\overline \sigma}{\underline \sigma}$, is bounded, the bound $\eq{r06}$ can be improved to
\be{
C_* \Delta_W \log d (|\log \Delta_W|\vee 1),
}
where $C_*$ depends only on $\sigma_*$.
Since $\Sigma$ is singular in the example attaining the upper bound in \eqref{r06} asymptotically, this improvement comes from the non-singularity assumption on $\Sigma$.
\end{remark}

%
As an illustration, we apply Theorem \ref{t1} to sums of i.i.d.~variables with log-concave densities. 
Recall that a probability measure $\mu$ on $\mathbb{R}^d$ has a log-concave density if it is supported on (the closure of) an open convex set $\Omega\subset \mathbb{R}^d$ and, on $\Omega$, it has a density of the form $e^{-V}$ with $V: \Omega\to \mathbb{R}$ a convex (hence continuous) function; see \cite{SaWe14} for more details. Note that the support of $\mu$ must be full dimensional because $\mu$ has a density. 
In this situation, under some regularity assumptions, \cite{Fathi19} provides a way to construct Stein kernels having some nice properties. This enables us to obtain the following near optimal error bound.

\begin{corollary}\label{c0}
Let $\mu$ be a probability measure on $\mathbb{R}^d$ with a log-concave density. 
Suppose $\mu$ has mean 0 and a covariance matrix $\Sigma$ with diagonal entries all equal to 1 and smallest eigenvalue $\sigma_*^2>0$. 
Let $W=n^{-1/2}\sum_{i=1}^n X_i\in \mathbb{R}^d$ with $n\geq 3$, where $\{X_1,\dots, X_n\}$ are i.i.d.~with law $\mu$. 
Let $Z\sim N(0,\Sigma)$.
Then 
\[
\sup_{h=1_A: A\in \mathcal{R}} |E h(W)- E h(Z)|
\leq \frac{C}{\sigma_*^2}\sqrt{\frac{\log^3 d}{n}} \log n.
\]
\end{corollary}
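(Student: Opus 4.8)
The plan is to apply Theorem~\ref{t1} to $W = n^{-1/2}\sum_{i=1}^n X_i$, so the whole task reduces to constructing a good Stein kernel for $W$ and controlling the quantity $\Delta_W = E[\max_{1\le j,k\le d}|\Sigma_{jk} - \tau^W_{jk}(W)|]$. The natural route is: first invoke the construction of \cite{Fathi19}, which, under the log-concavity hypothesis on $\mu$, produces a Stein kernel $\tau^{X}$ for a single summand $X_i$ (more precisely for the centered law $\mu$) whose entries are controlled in an $L^2$ (or Poincar\'e-type) sense, with a bound governed by the Poincar\'e constant of $\mu$. Since $\mu$ is isotropic-diagonal log-concave with smallest eigenvalue $\sigma_*^2$, its Poincar\'e constant is bounded (by the Kannan--Lov\'asz--Simonovits theory, or more simply since for log-concave measures the Poincar\'e constant is comparable to the spectral norm of the covariance, which here is $O(1)$ after the normalization $\Sigma_{jj}=1$; any $\sigma_*$-dependence gets absorbed into the $1/\sigma_*^2$ already present in Theorem~\ref{t1}). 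The key structural fact I would use is that Stein kernels tensorize/average: if $\tau^{X}$ is a Stein kernel for each i.i.d.\ copy, then $\tau^W(W) := \frac1n\sum_{i=1}^n \tau^{X}(X_i)$ is a Stein kernel for the normalized sum $W$. This is a standard computation that follows by conditioning on the other coordinates and using independence.

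Given this, $\Sigma_{jk} - \tau^W_{jk}(W) = \frac1n\sum_{i=1}^n \big(\Sigma_{jk} - \tau^{X}_{jk}(X_i)\big)$ is an average of $n$ i.i.d.\ mean-zero terms (mean zero because we take $\Sigma = \Cov(W) = \Cov(\mu)$ and $E\tau^X_{jk}(X) = \Sigma_{jk}$). So for each fixed pair $(j,k)$ this is a sum of i.i.d.\ centered variables scaled by $1/n$, and $\Delta_W$ is the expected maximum of $d^2$ such averages. The plan for bounding $\Delta_W$ is a maximal-inequality argument: control the moment generating function (or a sufficiently high moment) of $\tau^X_{jk}(X) - \Sigma_{jk}$ uniformly in $(j,k)$, then apply a union bound / Bernstein-type or Rosenthal-type inequality over the $d^2$ pairs, picking up a $\sqrt{\log(d^2)/n} = O(\sqrt{\log d / n})$ factor. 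Here is where the $\log n$ slack in the statement comes from: log-concave random variables have only sub-exponential (not sub-Gaussian) tails, and the entries of $\tau^X$ built from \cite{Fathi19} are quadratic-type functionals, so $\tau^X_{jk}(X)$ has roughly sub-exponential-squared tails; the expected maximum of $n^{-1}$ times a sum of such variables over $d^2$ indices is $O(\sqrt{\log d/n}\,\log n)$ rather than $O(\sqrt{\log d/n})$, using the truncation at level $\sim \log n$ together with a Bernstein bound for the truncated part and a crude bound for the (rare) excess. This gives $\Delta_W = O(\sqrt{\log d/n}\,\log n)$.

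Plugging this $\Delta_W$ into \eqref{f12} with $\overline\sigma = \underline\sigma = 1$ yields a bound of order $\frac{1}{\sigma_*^2}\sqrt{\tfrac{\log d}{n}}\,\log n \cdot \log d \cdot \big(|\log \Delta_W| \vee 1\big)$. Since $\Delta_W \ge$ some polynomial lower bound in $1/(dn)$ (it cannot be superpolynomially small in a genuine CLT regime, and in any case if $\Delta_W$ were that small the bound is trivially fine), $|\log \Delta_W| = O(\log(dn))$; absorbing $\log d$ into $\log^{3/2} d$ (one factor from the maximal inequality, one explicit factor of $\log d$ in \eqref{f12}, and one more from $|\log \Delta_W|$ when $d \ge n$, or $\log n$ when $n \ge d$) gives the claimed $\frac{C}{\sigma_*^2}\sqrt{\tfrac{\log^3 d}{n}}\log n$. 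One has to be a little careful bookkeeping the logarithmic factors — there is a $\log d$ from Theorem~\ref{t1}, a $\sqrt{\log d}$ from the union bound, and a $|\log\Delta_W|\vee 1$ term — and checking that their product is indeed $O(\log^{3/2} d \cdot \log n)$ rather than something larger; that is the main (though routine) place where care is needed.

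The main obstacle, and the part that is genuinely not bookkeeping, is establishing the right tail/moment control for the entries $\tau^X_{jk}(X) - \Sigma_{jk}$ of Fathi's Stein kernel uniformly over all $d^2$ pairs $(j,k)$ with constants depending only on $\sigma_*$ — i.e.\ verifying that the construction of \cite{Fathi19} really does produce a Stein kernel whose componentwise fluctuations are sub-exponential (up to squaring) with the Poincar\'e/spectral-norm-controlled scale, so that the maximal inequality loses only a single $\log n$ factor. This requires looking into the precise regularity of the moment map / Brenier-type potential underlying Fathi's construction (or the alternative "moment measure" construction), and invoking concentration for log-concave measures (e.g.\ the result that Lipschitz functions of log-concave vectors concentrate, or bounds on the fluctuations of the relevant transport map). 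Everything downstream — tensorization, the maximal inequality, and substitution into Theorem~\ref{t1} — is then standard.
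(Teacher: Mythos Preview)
Your overall strategy---Fathi's Stein kernel for one summand, averaging to get a Stein kernel for $W$, a maximal inequality to bound $\Delta_W$, then Theorem~\ref{t1}---is exactly the paper's. But your accounting of the $\log n$ is off, and as written your final bound is too large by a factor of $\log n$. In the paper, the maximal inequality gives $\Delta_W = O(\sqrt{\log d/n})$ with \emph{no} $\log n$: Fathi's Proposition~3.2 yields the explicit moment bound $\max_{j}E|\tau_{jj}(X_1)|^p\le 8^p p^{2p}$ (depending only on $\Sigma_{jj}=1$, not on any Poincar\'e constant or on $\sigma_*$), positive definiteness of $\tau$ extends this to all $j,k$, and then a sharp sub-Weibull($1/2$) maximal inequality for i.i.d.\ sums (Kuchibhotla--Chakrabortty) gives $E\max_{j,k}\bigl|\tfrac1n\sum_i(\tau_{jk}(X_i)-\Sigma_{jk})\bigr|\le C\bigl(\sqrt{\log d/n}+n^{-1}\log^2 d\bigr)$, and the second term is absorbed by the first under the harmless a~priori assumption $\log^3 d\le \sigma_*^4 n$. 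The single $\log n$ in the corollary then comes \emph{entirely} from the factor $|\log\Delta_W|\vee1$ in \eqref{f12}. With your claimed $\Delta_W=O(\sqrt{\log d/n}\,\log n)$, plugging into \eqref{f12} produces $\sqrt{\log^3 d/n}\,\log n\cdot(|\log\Delta_W|\vee1)$, and since $|\log\Delta_W|\asymp\log n$ you end up with $\log^2 n$; your ``absorbing'' paragraph does not actually close this gap.

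Two further points you glossed over. First, $\tfrac1n\sum_i\tau^X(X_i)$ is a function of $(X_1,\dots,X_n)$, not of $W$; the Stein kernel for $W$ is its conditional expectation given $W$, and Jensen's inequality is what lets you pass to the i.i.d.\ average inside $\Delta_W$. Second, Fathi's construction (existence of the moment map with the stated moment bounds) requires $\mu$ to be compactly supported with density bounded away from zero on its support; the paper handles general log-concave $\mu$ by first restricting to a large convex body and renormalizing, then removing a small Gaussian perturbation, with both approximations controlled by convergence of densities and Scheff\'e's lemma. Your remark about the Poincar\'e constant being ``comparable to the spectral norm of the covariance'' is neither needed nor, in this generality, available without invoking KLS-type results.
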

%

%
As we see in the following proposition, if $\sigma_*$ is bounded away from 0 by an absolute constant, $\sqrt{\frac{\log^3d}{n}}$ is generally the optimal convergence rate in this situation, so the above corollary gives a nearly optimal rate.  
%
\begin{proposition}\label{p1}
Let $X=(X_{ij})_{i,j=1}^\infty$ be an array of i.i.d.~random variables such that $Ee^{c|X_{ij}|}<\infty$ for some $c>0$, $EX_{ij}=0$, $EX_{ij}^2=1$ and $\gamma:=EX_{ij}^3\neq0$. 
Let $W=n^{-1/2}\sum_{i=1}^n X_i$ with $X_i:=(X_{i1},\dots,X_{id})^\top$. 
Suppose that $d$ depends on $n$ so that $(\log^3d)/n\to0$ and $d(\log^3d)/n\to\infty$ as $n\to\infty$. 
Also, let $Z\sim N(0, I_d)$.
Then we have
\[
\limsup_{n\to\infty}\sqrt{\frac{n}{\log^3d}}\sup_{x\in\mathbb{R}}\left|P\left(\max_{1\leq j\leq d}W_j\leq x\right)-P\left(\max_{1\leq j\leq d}Z_j\leq x\right)\right|>0.
\]
\end{proposition}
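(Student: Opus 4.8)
The plan is to exhibit a concrete lower bound on the Kolmogorov distance between $\max_j W_j$ and $\max_j Z_j$ by means of a one-term Edgeworth-type expansion, exploiting that the third moment $\gamma\neq0$ produces a correction term of the right order. First I would recall that, because the coordinates $X_{i1},\dots,X_{id}$ are i.i.d.\ (in particular independent across $j$), both $W$ and $Z$ have independent coordinates, so $P(\max_j W_j\leq x)=\prod_{j=1}^d F_n(x)$ and $P(\max_j Z_j\leq x)=\Phi(x)^d$, where $F_n$ is the distribution function of $n^{-1/2}\sum_{i=1}^n X_{i1}$ and $\Phi$ is the standard normal c.d.f. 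Thus the problem reduces to a \emph{one-dimensional} comparison: I need to understand $F_n(x)-\Phi(x)$ in the relevant regime of $x$, which is $x=x_n$ of order $\sqrt{2\log d}$ (the location where $\Phi(x)^d$ transitions from $0$ to $1$), and then see how the product magnifies a pointwise discrepancy of size $\sim n^{-1/2}$ into a discrepancy of size $\sim\sqrt{(\log d)/n}$ between the two maxima. A cleaner route, to avoid delicate moderate-deviation Edgeworth control at $x\sim\sqrt{2\log d}$, is to instead take $x$ \emph{constant} and bound the maximum over $x\in\mathbb R$ from below by the value at a cleverly chosen $x_n\to\infty$ slowly; but since the condition $d(\log^3d)/n\to\infty$ is exactly what forces $\sqrt{2\log d}$ into the moderate-deviation zone, I expect the honest argument does need a moderate-deviation expansion.

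The key steps, in order, would be: (i) Write $\log P(\max_j W_j\leq x)=d\log F_n(x)=-d(1-F_n(x))+O(d(1-F_n(x))^2)$ and similarly $\log P(\max_j Z_j\leq x)=-d(1-\Phi(x))+O(d(1-\Phi(x))^2)$; choose $x=x_n$ so that $d(1-\Phi(x_n))$ tends to a positive constant, which pins down $x_n=\sqrt{2\log d}(1+o(1))$. (ii) Apply a moderate-deviation / Cramér-type expansion for the i.i.d.\ sum $n^{-1/2}\sum_{i=1}^n X_{i1}$ — valid under the finite exponential moment hypothesis $Ee^{c|X_{ij}|}<\infty$ — to obtain
\besn{\label{eq:md}
\frac{1-F_n(x_n)}{1-\Phi(x_n)}=\exp\!\left(\frac{x_n^3}{\sqrt n}\,\lambda\!\left(\frac{x_n}{\sqrt n}\right)\right)\!\left(1+O\!\left(\frac{1+x_n}{\sqrt n}\right)\right),
}
where $\lambda(\cdot)$ is the Cramér series with leading coefficient $\gamma/6\neq0$; in the range $x_n\asymp\sqrt{\log d}$ with $(\log^3 d)/n\to0$ this gives $1-F_n(x_n)=(1-\Phi(x_n))(1+\tfrac{\gamma}{6}x_n^3/\sqrt n+o(x_n^3/\sqrt n))$. (iii) Combine: $d(1-F_n(x_n))-d(1-\Phi(x_n))=d(1-\Phi(x_n))\cdot\tfrac{\gamma}{6}\tfrac{x_n^3}{\sqrt n}(1+o(1))\asymp x_n^3/\sqrt n\asymp (\log d)^{3/2}/\sqrt n=\sqrt{(\log^3 d)/n}$, and since $d(1-\Phi(x_n))$ is bounded away from $0$ and $\infty$, exponentiating shows $|P(\max_j W_j\leq x_n)-P(\max_j Z_j\leq x_n)|\asymp\sqrt{(\log^3 d)/n}$; the error terms in \eqref{eq:md} and in the $\log$-expansions are $o(\sqrt{(\log^3 d)/n})$ precisely because $(\log^3 d)/n\to0$. (iv) Conclude $\limsup_n\sqrt{n/\log^3 d}\,\sup_x|\cdots|\geq$ a positive constant, which is the claim.

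The main obstacle is Step (ii): I need a moderate-deviation expansion that is uniform in the relevant window $x=x_n\sim\sqrt{2\log d}$ and precise enough to isolate the $\gamma x_n^3/\sqrt n$ correction with a genuinely smaller remainder. This is classical (Cramér, Linnik, Petrov; see the moderate-deviation literature for sums of i.i.d.\ variables with exponential moments), and the hypotheses $Ee^{c|X_{ij}|}<\infty$ and $\gamma\neq0$ are exactly tailored to it, but invoking it at the correct order and checking that the multiplicative error $1+O((1+x_n)/\sqrt n)$ together with the $O(d(1-\Phi(x_n))^2)$ from the $\log F_n$ expansion are both $o(x_n^3/\sqrt n)$ requires the two growth conditions on $d$: $(\log^3 d)/n\to0$ controls the former, and $d(\log^3 d)/n\to\infty$ is what guarantees $x_n$ is large enough that $x_n^3/\sqrt n$ genuinely dominates the $O(x_n/\sqrt n)$ relative error (equivalently, $x_n^2\to\infty$, i.e.\ $\log d\to\infty$, which is automatic). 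A secondary technical point is ensuring the chosen $x_n$ can be taken so that $d(1-\Phi(x_n))$ converges to a fixed constant rather than merely being bounded — handled by continuity/monotonicity of $\Phi$ and a standard Gaussian tail estimate $1-\Phi(x)=\tfrac{\phi(x)}{x}(1+o(1))$ — after which everything is bookkeeping.
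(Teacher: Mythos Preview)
Your proposal is correct and follows essentially the same route as the paper: pick $x_n$ with $d(1-\Phi(x_n))\to1$, apply the Cram\'er--Petrov moderate-deviation expansion \eqref{eq:md} to compare $1-F_n(x_n)$ with $1-\Phi(x_n)$, and show the resulting discrepancy between the maxima is of exact order $\sqrt{(\log^3 d)/n}$. The paper handles the product $F_n(x_n)^d$ via the Arratia--Goldstein--Gordon Poisson approximation rather than your direct $\log$-expansion, but with independent coordinates these are equivalent and yield the same $O(1/d)$ error.

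One small correction: your account of why the hypothesis $d(\log^3 d)/n\to\infty$ is needed is off. It is \emph{not} there to force $x_n^3/\sqrt n$ to dominate the $O(x_n/\sqrt n)$ relative error (that only needs $\log d\to\infty$, which is automatic). Its real role is to ensure that the $O(d(1-\Phi(x_n))^2)=O(1/d)$ error from your $\log$-expansion (equivalently, the Poisson-approximation error in the paper) is $o(\sqrt{(\log^3 d)/n})$; this requires $d^2\log^3 d/n\to\infty$, which the stated hypothesis implies. Once you fix that attribution, your bookkeeping goes through exactly as in the paper.
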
  
Proposition~\ref{p1} is proved in Section \ref{sec3-p1}. 
Note that it is possible to find an example which simultaneously satisfies the assumptions in both Corollary~\ref{c0} and Proposition~\ref{p1}. 
In fact, in the setting of Proposition~\ref{p1}, if the law of $X_{ij}$ has a log-concave density,
the assumptions of Corollary~\ref{c0} are satisfied due to the independence across the coordinates of $X_i$. 
For example, this is the case when $X_{ij}$ follows a normalized exponential distribution. 

Theorem~\ref{t1} is also interesting in the context of the so-called Malliavin-Stein method (see \cite{NoPe12} for an exposition of this topic). 
For simplicity, we focus on the case that the coordinates of $W$ are multiple Wiener-It\^{o}integrals with common orders. We refer to \cite{NoPe12} for unexplained concepts appearing in the following corollary (and its proof). 
\begin{corollary}\label{c-wiener}
Let $X$ be an isonormal Gaussian process over a real separable Hilbert space $\mathfrak{H}$. 
Let $q\in\mathbb{N}$ and denote by $I_q(f)$ the $q$-th multiple Wiener-It\^{o}integral of $f\in\mathfrak{H}^{\odot q}$ with respect to $X$, where $\mathfrak{H}^{\odot q}$ denotes the $q$-th symmetric tensor power of $\mathfrak{H}$. 
For every $j=1,\dots,d$, suppose $W_j=I_q(f_j)$ for some $f_j\in\mathfrak{H}^{\odot q}$. Suppose also 
$\Cov(W)=\Sigma$ with diagonal entries all equal to 1 and smallest eigenvalue $\sigma_*^2>0$. Let $Z\sim N(0, \Sigma)$.
Then we have
\begin{equation}\label{eq:c-wiener}
\sup_{h=1_A: A\in \mathcal{R}} |E h(W)- E h(Z)|
\leq C_q \frac{\ol\Delta_W}{\sigma_*^2}(\log^q d)(|\log \ol\Delta_W |\vee1),
\end{equation}
where $C_q>0$ is a constant depending only on $q$ and 
\[
\ol\Delta_W:=\max_{1\leq j\leq d}\sqrt{EW_j^4-3(EW_j^2)^2}.
\]
\end{corollary}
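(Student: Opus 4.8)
The plan is to apply Theorem~\ref{t1} with a Stein kernel coming from Malliavin calculus. Since $W_j=I_q(f_j)$ is an eigenfunction of the Ornstein--Uhlenbeck generator $L=-\delta D$ with eigenvalue $-q$, we have $-DL^{-1}W_j=\tfrac1q DW_j$, so the integration-by-parts formula $E[\phi(W)W_j]=E[\langle D\phi(W),-DL^{-1}W_j\rangle_{\mathfrak H}]$ becomes $E[\phi(W)W_j]=\tfrac1q\sum_{k=1}^d E[\partial_k\phi(W)\langle DW_j,DW_k\rangle_{\mathfrak H}]$ for every smooth $\phi$ with bounded derivatives. Taking $\phi=\partial_j f$ and summing over $j$ shows that
\[
\tau^W_{jk}(w):=\tfrac1q\,E\bigl[\langle DW_j,DW_k\rangle_{\mathfrak H}\,\big|\,W=w\bigr]
\]
is a Stein kernel for $W$; the conditional expectation is inserted only so that $\tau^W$ is a genuine function on $\mathbb R^d$, as required in the definition of a Stein kernel, and it does not affect the defining identity. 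Since $E\tau^W_{jk}(W)=\tfrac1q E\langle DW_j,DW_k\rangle_{\mathfrak H}=E[W_jW_k]=\Sigma_{jk}$ and the diagonal of $\Sigma$ is $1$ (so $\overline\sigma=\underline\sigma=1$), Theorem~\ref{t1} reduces \eqref{eq:c-wiener} to estimating $\Delta_W=E[\max_{1\le j,k\le d}|\Sigma_{jk}-\tau^W_{jk}(W)|]$.

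To estimate $\Delta_W$, note first that for each pair $(j,k)$ the variable $\tfrac1q\langle DW_j,DW_k\rangle_{\mathfrak H}-\Sigma_{jk}$ is centred and, by the product formula for multiple integrals together with the fact that $DW_j$ lies in the $(q-1)$st Wiener chaos, belongs to the direct sum of the chaoses of orders $2,4,\dots,2q-2$. Hypercontractivity hence gives $\|\cdot\|_{L^p}\le(p-1)^{q-1}\|\cdot\|_{L^2}$ for all $p\ge2$, and by conditional Jensen the same $L^p$--$L^2$ comparison holds for $\Sigma_{jk}-\tau^W_{jk}(W)$. Combining this with $E[\max_{1\le i\le N}|X_i|]\le N^{1/p}\max_i\|X_i\|_{L^p}$ applied to the $N=d^2$ entries and taking $p=2\log d$ yields
\[
\Delta_W\le C_q\,(\log d)^{q-1}\max_{1\le j,k\le d}\sqrt{\Var\bigl(\tfrac1q\langle DW_j,DW_k\rangle_{\mathfrak H}\bigr)}.
\]
It then remains to dominate each variance by fourth cumulants: writing both $\Var(\tfrac1q\langle DW_j,DW_k\rangle_{\mathfrak H})$ and $\kappa_4(W_\ell):=EW_\ell^4-3(EW_\ell^2)^2$ as positive $q$-dependent combinations of squared norms of contractions, and bounding the mixed contractions of $f_j$ and $f_k$ by the diagonal contractions of each (via Cauchy--Schwarz and the identity $\|f_j\otimes_r f_k\|^2=\langle f_j\otimes_{q-r}f_j,\,f_k\otimes_{q-r}f_k\rangle$), one obtains $\Var(\tfrac1q\langle DW_j,DW_k\rangle_{\mathfrak H})\le C_q\sqrt{\kappa_4(W_j)\kappa_4(W_k)}\le C_q\,\ol\Delta_W^2$, so that $\Delta_W\le C_q(\log d)^{q-1}\ol\Delta_W$.

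Substituting this bound into \eqref{f12} finishes the proof. We may assume the right-hand side of \eqref{eq:c-wiener} is $<2$, since otherwise the claim is trivial; for a suitably large constant $C_q$ this forces $\ol\Delta_W$ to be small enough that $\Delta_W\le C_q(\log d)^{q-1}\ol\Delta_W<\sigma_*^2/\e$ and $\ol\Delta_W<1/\e$. On $(0,\sigma_*^2/\e)$ the function $x\mapsto x(|\log(x/\sigma_*^2)|\vee1)=x\log(\sigma_*^2/x)$ appearing in \eqref{f12} is increasing, so inserting the bound for $\Delta_W$ and discarding the non-positive terms $\log\sigma_*^2$, $-\log C_q$, $-(q-1)\log\log d$ from $\log(\sigma_*^2/\Delta_W)$ (valid since $\sigma_*^2\le1$, $C_q\ge1$, $d\ge3$) leaves a bound of the asserted shape $C_q\,\sigma_*^{-2}(\log^q d)\,\ol\Delta_W\,|\log\ol\Delta_W|$, with $|\log\ol\Delta_W|>1$ absorbing the $\vee1$. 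The step I expect to require the most care is the off-diagonal variance bound $\Var(\tfrac1q\langle DW_j,DW_k\rangle_{\mathfrak H})\le C_q\sqrt{\kappa_4(W_j)\kappa_4(W_k)}$ for $j\ne k$: the diagonal case $j=k$ is the classical estimate behind the fourth-moment theorem, whereas the off-diagonal case needs the mixed contractions of $f_j,f_k$ to be controlled by the diagonal contractions of each and the symmetrisations tracked carefully, which is where the constant $C_q$ is generated. Everything else — the algebraic identification of the kernel, hypercontractivity with the moment bound on the maximum, and the case analysis for the logarithmic factor — is routine.
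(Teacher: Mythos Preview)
Your proof is correct and follows essentially the same route as the paper: construct the Malliavin--Stein kernel $\tau^W_{jk}=E[\tfrac1q\langle DW_j,DW_k\rangle_{\mathfrak H}\mid W]$, bound $\Delta_W\le C_q(\log d)^{q-1}\ol\Delta_W$, and invoke Theorem~\ref{t1}. The only difference is that the paper outsources the middle step to two citations (Proposition~3.7 of \cite{NoPeSw14} for the kernel and Lemma~2.2 of \cite{Ko19a} for the maximal-moment bound), whereas you unpack both --- the hypercontractivity-plus-moment argument and the off-diagonal variance estimate $\Var(\tfrac1q\langle DW_j,DW_k\rangle)\le C_q\sqrt{\kappa_4(W_j)\kappa_4(W_k)}$ via contraction norms --- and also spell out the logarithmic bookkeeping that the paper leaves implicit.
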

Corollary~\ref{c-wiener} is comparable with Corollary 1.3 in \cite{NoPeYa20}, where an analogous bound to \eqref{eq:c-wiener} is obtained when $\mathcal{R}$ is replaced by the set of all measurable convex subsets of $\mathbb{R}^d$ (see also \cite{KiPa15} for related results). 
Meanwhile, considering the restricted class $\mathcal{R}$, we improve the dimension dependence of the bounds: Typically, the bound of Corollary 1.3 in \cite{NoPeYa20} depends on the dimension through $d^{41/24+1}$, while our bound depends through $\log^qd$.

We also remark that \cite{NoPeYa20} succeeded in removing the logarithmic factor from their bound by an additional recursion argument. However, it does not seem straightforward to apply their argument to our situation.

Stein kernels do not exist for discrete random vectors. Next, we apply other commonly used approaches in Stein's method to exploit the dependence structure of a random vector and obtain error bounds in the normal approximation. First, we consider the exchangeable pair approach developed in \cite{St86} in one-dimensional normal approximations and \cite{ChMe08} and \cite{ReRo09} for multivariate normal approximations. 

\begin{theorem}[Error bound using exchangeable pairs]\label{t2}
Suppose we can construct another random vector $W'$ on the same probability space such that $(W, W')$ and $(W', W)$ have the same distribution (exchangeable), and moreover,
\begin{equation}\label{eq:lr}
E(W'-W\mid W)=-\Lambda (W+R)
\end{equation}
for some invertible $d\times d$ matrix $\Lambda$ (linearity condition). Let $D=W'-W$ and suppose $E\|D\|_\infty^4<\infty$. 
Also, let $Z\sim N(0,\Sigma)$.
Then, if $\eta\geq0$ and $t>0$ satisfy $\eta/\sqrt{t}\leq \sigma_*/\sqrt{\log d}$, we have
\begin{multline*}
\sup_{h=1_A: A\in \mathcal{R}} |E h(W)- E h(Z)|\\
\leq C\left\{\frac{1}{\sigma_*}E\left(\max_{1\leq j\leq d} |R_j|\right)\sqrt{\log d}  
+\frac{1}{\sigma_*^2}\Delta_1 (|\log t|\vee1)\log d
+\frac{1}{\sigma_*^4}(\Delta_2+\Delta_3(\eta))\frac{(\log d)^{2}}{t}  
+\frac{\ol \sigma}{\ul \sigma}\sqrt{t} \log d\right\},
\end{multline*}
where the $\sigma$'s are defined in \eq{eq:sigma},
\ba{
\Delta_1&:=E\left[ \max_{1\leq j,k\leq d}\left|\Sigma_{jk}-\frac{1}{2} E[(\Lambda^{-1}D)_j D_k\mid W]\right| \right],\\
\Delta_2&:=E\left[\max_{1\leq j,k,l,m\leq d}E[|(\Lambda^{-1}D)_jD_kD_lD_m|\mid W]\right],\\
\Delta_3(\eta)&:=E\left[\max_{1\leq j,k,l,m\leq d}|(\Lambda^{-1}D)_jD_kD_lD_m|;\|D\|_{\infty}>\eta\right].
}
\end{theorem}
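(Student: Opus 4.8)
\textbf{Proof plan for Theorem \ref{t2}.}
The natural strategy is to reduce the exchangeable-pair bound to an application of a ``master'' smoothing estimate coming from G\"otze's approach, exactly as Theorem~\ref{t1} does for Stein kernels, but now carrying the extra second-order Taylor error produced by the exchangeable pair. First I would recall the multivariate Stein equation associated with $Z\sim N(0,\Sigma)$ and the smoothed indicator: given $h=1_A$ with $A\in\mathcal R$, one smooths $h$ at scale controlled by $t$ (this is where the $\sqrt t\,\log d$ term and the factor $\ol\sigma/\ul\sigma$ will enter, via the anti-concentration / smoothing inequality of Bhattacharya--Rao together with the Anderson--Hall--Titterington-type estimate already used for Theorem~\ref{t1}), and lets $f$ be the solution of the Stein equation for the smoothed function, whose first three derivatives obey bounds in terms of $\sigma_*$, $\log d$, and the smoothing parameter $t$.

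The core computation is the standard exchangeable-pair identity: by exchangeability, $0=E[(W_j'-W_j)(\partial_k f(W')+\partial_k f(W))]$ summed appropriately, so that
\begin{equation*}
0=E\Bigl[\sum_{j,k}(\Lambda^{-1}D)_j\bigl(\partial_k f(W')+\partial_k f(W)\bigr)\Bigr].
\end{equation*}
Taylor expanding $\partial_k f(W')=\partial_k f(W+D)$ to second order and using the linearity condition \eqref{eq:lr} to rewrite $E[\sum_j (\Lambda^{-1}D)_j\partial_jf(W)\cdot 2]$ as $-2E[\sum_j(W_j+R_j)\partial_j f(W)]$ (after applying $E[\,\cdot\mid W]$ to $\Lambda^{-1}D$ inside), one arrives at an approximate Stein identity of the form $E[\sum_j W_j\partial_j f(W)-\sum_{j,k}\Sigma_{jk}\partial_{jk}f(W)]=\text{(error)}$, where the error splits into: a term involving $R$, controlled by $E\max_j|R_j|$ times the sup-norm of $\nabla f$, giving the $\frac1{\sigma_*}E(\max_j|R_j|)\sqrt{\log d}$ contribution; a term $\sum_{j,k}(\Sigma_{jk}-\tfrac12E[(\Lambda^{-1}D)_jD_k\mid W])\partial_{jk}f(W)$ controlled by $\Delta_1$ times the relevant second-derivative bound $\lesssim\sigma_*^{-2}(|\log t|\vee1)\log d$; and the third-order Taylor remainder $\tfrac12E[\sum_{j,k,l}(\Lambda^{-1}D)_jD_kD_l\,\partial_{jkl}f(W+\theta D)]$. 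For this last remainder I would split the expectation over $\{\|D\|_\infty\le\eta\}$ and its complement: on the small-jump event, bound $|D_l|\le\eta$ and one more $|D_m|$ by $\eta$ where needed, pulling out $\eta^2/t$ against the third-derivative bound (here the hypothesis $\eta/\sqrt t\le\sigma_*/\sqrt{\log d}$ keeps this term from blowing up and lets the fourth-moment quantities $\Delta_2,\Delta_3(\eta)$ appear with the stated $\sigma_*^{-4}(\log d)^2/t$ weight); on the large-jump event, bound directly by $\Delta_3(\eta)$ times the third-derivative bound. Combining the four pieces and inserting the smoothing error $\lesssim(\ol\sigma/\ul\sigma)\sqrt t\log d$ yields the claimed inequality.

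The main obstacle, and the step that needs the most care, is the bookkeeping of derivative bounds for the solution $f$ of the Stein equation applied to the \emph{smoothed} indicator: one needs sharp dependence on $\sigma_*$, on $\log d$ (coming from the number of faces of the hyperrectangle and the sub-Gaussian maximal inequality for $Z$), and on the smoothing scale $t$ — in particular the second derivatives must carry only a $|\log t|\vee1$ factor while the third derivatives carry a $1/t$, which is exactly what forces the shape of the bound. This is precisely the place where the modifications of \cite{AnHaTi98} and \cite{BhRa76} advertised in the introduction do the work, and I expect the bulk of the proof to consist of quoting/establishing these derivative estimates (presumably packaged as a lemma in Section~\ref{sec2}) and then performing the expansion above; the exchangeable-pair algebra itself is routine once the linearity condition is in hand, and the regrouping into $\Delta_1,\Delta_2,\Delta_3(\eta)$ is dictated by which conditional expectations survive.
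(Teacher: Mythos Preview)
Your overall architecture (smoothing, Stein equation for $T_t\tilde h$, exchangeable-pair identity, Taylor expansion) matches the paper, and the $R$ and $\Delta_1$ terms arise exactly as you describe. However, the treatment of the higher-order remainder has a genuine gap.

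The Taylor remainder you write down is third-order: $\frac12\sum_{j,k,l}(\Lambda^{-1}D)_jD_kD_l\,\partial_{jkl}\psi_t(W+\theta D)$. This involves only three factors of $D$, so there is no ``one more $|D_m|$'' to bound by $\eta$, and the third-derivative estimate \eqref{aht3} carries $1/\sqrt t$, not $1/t$. Bounding this remainder directly would produce a third-moment quantity with weight $\sigma_*^{-3}(\log d)^{3/2}/\sqrt t$, which is \emph{not} the claimed $\sigma_*^{-4}(\log d)^2(\Delta_2+\Delta_3(\eta))/t$; note that $\Delta_2$ and $\Delta_3(\eta)$ are fourth-order in $D$. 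The paper closes this gap by using exchangeability a \emph{second} time on the remainder $\Xi$: swapping $W\leftrightarrow W'$ gives
\[
E\bigl[(\Lambda^{-1}D)_jD_kD_l\,U\,\partial_{jkl}\psi_t(W+(1-U)D)\bigr]
=-E\bigl[(\Lambda^{-1}D)_jD_kD_l\,U\,\partial_{jkl}\psi_t(W+UD)\bigr],
\]
so $E[\Xi]$ equals half the difference, which after one more Taylor step becomes a genuine fourth-order expression $\frac14\sum_{j,k,l,m}E[(\Lambda^{-1}D)_jD_kD_lD_m\,U(1-2U)\,\partial_{jklm}\psi_t(W+\tilde D)]$. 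Only now do four $D$'s appear and the fourth-derivative bound $\sum_{j,k,l,m}\sup_{y\in R(0;\eta)}|\partial_{jklm}\psi_t(x+y)|\leq C\sigma_*^{-4}t^{-1}\log^2 d$ (Lemma~\ref{l2} with $r=4$) delivers the stated weight. The hypothesis $\eta/\sqrt t\leq\sigma_*/\sqrt{\log d}$ enters precisely here, as the condition needed to apply Lemma~\ref{l2} uniformly over $y\in R(0;\eta)$, not to tame an ``$\eta^2/t$'' factor.
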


\begin{remark}
The exchangeability and the linearity condition in the statement of Theorem \ref{t2} may be motivated by considering a bivariate normal vector $(W,W')$ with correlation $\rho$ and $E(W'-W|W)=-(1-\rho) W$.
If $W=\sum_{i=1}^n \xi_i$ is a sums of independent random vectors and $W'=W-\xi_I+\xi_I'$, where $I$ is an independent random index uniformly chosen from $\{1,\dots, n\}$ and $\{\xi_i': 1\leq i\leq n\}$ is an independent copy of $\{\xi_i: 1\leq i\leq n\}$, then it can be verified that \eq{eq:lr} is satisfied with $\Lambda=\frac{1}{n} I_d$ and $R=0$.
The exchangeable pair approach was proved to be useful for dependent random vectors as well; See \cite{ReRo09} and the references therein for many applications.
\end{remark}

\begin{remark}
We can take $\eta=0$ and $t=(\frac{\ul \sigma}{\ol \sigma \sigma_*^4}\Delta_3(0) \log d)^{2/3}$ in Theorem~\ref{t2} to obtain a simpler bound
\be{
C\left\{\frac{1}{\sigma_*}E\left(\max_{1\leq j\leq d} |R_j|\right)\sqrt{\log d}+\frac{1}{\sigma_*^2} \Delta_1 (|\log (\frac{\ul \sigma}{\ol \sigma \sigma_*^4}\Delta_3(0))|\vee 1)\log d   +(\frac{\ol \sigma^2}{\ul \sigma^2 \sigma_*^4}\Delta_3(0) \log^4 d)^{1/3}\right\}.
}
We can simplify the bound in Theorem~\ref{ft4} below similarly. 
However, these simplified bounds result in a worse bound $C(B_n^{4} (\log^{6} d)/\sigma_*^4 n)^{1/3}$ for Corollary~\ref{c2}.
We introduce the parameter $\eta$ in the same spirit as in the Chernozhukov-Chetverikov-Kato theory: It plays a similar role to the parameter $\gamma$ in \cite{CCK13} and serves for better control of maximal moments appearing in the bound. 
\end{remark}

We note that \cite{Me06} introduced an infinitesimal version of the exchangeable pairs approach. Her method can be used to find the Stein kernel for certain random vectors with a continuous symmetry; hence, we can apply Theorem~\ref{t1} to obtain a near optimal rate of convergence. In general, however, the convergence rate obtained using Theorem~\ref{t2} is slower, as demonstrated below in Corollary~\ref{c2}.

Next, we consider non-linear statistics along the lines of \cite{Ch08a}, \cite{ChRo10} and \cite{Du19}.

%

\begin{theorem}[Error bound for non-linear statistics]\label{ft4}
Let $X=(X_1,\dots, X_n)$ be a sequence of independent random variables taking values in a measurable space $\mathcal{X}$.
Let $F: \mathcal{X}^n\to \mathbb{R}^d$ be a measurable function, and let $W=F(X)$. 
Let $X'=(X_1',\dots, X_n')$ be an independent copy of $X$.
For each $A\subset [n]$, define $X^A=(X_1^A,\dots, X_n^A)$ where 
\be{
X_i^A=
\begin{cases}
X_i', & \text{if}\ i\in A\\
X_i, & \text{if}\ i\notin A.
\end{cases}
}
Let $W^A=F(X^A)$.
Suppose $E(W)=0$ and $E\|W\|_\infty^4<\infty$. 
Also, let $Z\sim N(0, \Sigma)$.
Then, if $\eta\geq0$ and $t>0$ satisfy $\eta/\sqrt{t}\leq \sigma_*/\sqrt{\log d}$, we have
\bes{
\sup_{h=1_A: A\in \mathcal{R}} |E h(W)- E h(Z)|\leq C\left(\frac{1}{\sigma_*^2} \delta_1 (|\log t|\vee1) \log d+\frac{1}{\sigma_*^4}(\delta_2+\delta_3(\eta))\frac{1}{t} (\log d)^2+\frac{\ol \sigma}{\ul \sigma}\sqrt{t} \log d\right),
}
where the $\sigma$'s are defined in \eq{eq:sigma},
\ba{
\delta_1&:= E\left[ \max_{1\leq j,k\leq d} \left|\Sigma_{jk}-\frac{1}{2}\sum_{i=1}^n (W^{\{1:i\}}-W^{\{1:(i-1)\}} )_j (W^{\{i\}}-W)_k    \right|    \right],\\
\delta_2&:= E\left[\max_{1\leq j\leq d}\sum_{i=1}^n (  W^{\{1:i\}}-W^{\{1:(i-1)\}} )_j^4  \right]
+E\left[\max_{1\leq j\leq d}\sum_{i=1}^n (  W^{\{i\}}-W )_j^4  \right],\\
\delta_3(\eta)&:=\sum_{i=1}^nE\left[\max_{1\leq j\leq d}(  W^{\{1:i\}}-W^{\{1:(i-1)\}} )_j^4; \|W^{\{i\}}-W\|_{\infty}>\eta  \right]\\
&\quad+\sum_{i=1}^n E\left[\max_{1\leq j\leq d}(  W^{\{i\}}-W )_j^4;\|W^{\{i\}}-W\|_\infty>\eta  \right],
}
and 
\be{
\{1:i\}:=
\begin{cases}
\{1,2,\dots, i\}, &\text{if}\ i\geq 1\\
\emptyset, & \text{if}\ i=0.
\end{cases}
}

\end{theorem}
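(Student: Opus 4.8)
The plan is to mirror the structure of the proof of Theorem~\ref{t1}, but replace the Stein-kernel identity with a telescoping decomposition along the lines of the non-linear-statistics approach of \cite{Ch08a} and \cite{ChRo10}. First I would set up the G\"otze-type argument: for a target test function $h=1_A$ with $A\in\mathcal{R}$, one replaces $h$ by a smoothed version obtained by convolving with a Gaussian of small bandwidth (using the modified Bhattacharya--Rao smoothing inequality and the modified Anderson--Hall--Titterington estimate alluded to in the introduction), and then writes $Eh_{\mathrm{smooth}}(W)-Eh_{\mathrm{smooth}}(Z)=E[\mathcal{A}f(W)]$ where $f$ solves the multivariate Stein equation $\Sigma:\nabla^2 f(w)-w\cdot\nabla f(w)=h_{\mathrm{smooth}}(w)-Eh_{\mathrm{smooth}}(Z)$. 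The point is that the regularity of $f$ (bounds on $\partial_{jk}f$, and on the relevant mixed third-order quantities, with the $\log d$ and $\sigma_*$ dependence) is exactly what was established for the proof of Theorem~\ref{t1}, so I would quote those estimates rather than rederive them.

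Next I would handle the term $E[W\cdot\nabla f(W)]$ by the standard Stein/Lindeberg swapping argument adapted to the functions $X\mapsto X^A$. Writing $W=F(X)$ and using that $X'$ is an independent copy, one has for each coordinate $j$ a telescoping identity
\be{
E[W_j\,\partial_j f(W)]=\sum_{i=1}^n E\bigl[(W-W^{\{i\}})_j\,\partial_j f(W)\bigr],
}
and then a first-order Taylor expansion of $\partial_j f$ around an intermediate point, using the conditional-independence bookkeeping provided by the sets $\{1:i\}$, produces the "Stein-kernel-like" random matrix $\frac12\sum_{i=1}^n (W^{\{1:i\}}-W^{\{1:(i-1)\}})_j(W^{\{i\}}-W)_k$ as the leading term together with a second-order remainder controlled by fourth moments of the increments. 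The quantity $\delta_1$ measures how far this random matrix is (in expectation, in $\ell^\infty$) from $\Sigma$, and it plays precisely the role $\Delta_W$ played in Theorem~\ref{t1}; the remainder terms are what get bounded by $\delta_2$ and the truncated version $\delta_3(\eta)$, with the truncation at level $\eta$ used — just as the parameter $\gamma$ is used in the Chernozhukov--Chetverikov--Kato approach — to split a fourth-moment quantity into a bounded-increment part (absorbed into the $1/t$ term against the second derivative bound $\sim(\log d)^2/\sigma_*^4$) and a tail part $\delta_3(\eta)$.

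The optimization over the smoothing bandwidth $t$ then yields the stated three-term bound: the $\delta_1(|\log t|\vee1)\log d/\sigma_*^2$ term comes from the leading matrix discrepancy against the second-derivative bound (the $|\log t|$ factor is the usual logarithmic loss in the anti-concentration/smoothing step), the $(\delta_2+\delta_3(\eta))(\log d)^2/(t\sigma_*^4)$ term from the Taylor remainders, and the $(\ol\sigma/\ul\sigma)\sqrt{t}\log d$ term from the smoothing error itself; the constraint $\eta/\sqrt t\le\sigma_*/\sqrt{\log d}$ is what makes the truncated Taylor expansion legitimate at scale $t$. The main obstacle, I expect, is the careful conditional-independence bookkeeping in the telescoping step: one must expand $\partial_jf$ to first order while keeping track of which coordinates of $X$ have been resampled, so that after taking conditional expectations the cross term reassembles into $\sum_i(W^{\{1:i\}}-W^{\{1:(i-1)\}})_j(W^{\{i\}}-W)_k$ and not some other combination, and so that the remainder genuinely involves only fourth powers of the single-coordinate increments $W^{\{i\}}-W$ and the block increments $W^{\{1:i\}}-W^{\{1:(i-1)\}}$. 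Everything else is a matter of inserting the Stein-solution regularity bounds from the proof of Theorem~\ref{t1} and optimizing, which I would treat as routine.
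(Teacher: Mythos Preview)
Your overall strategy matches the paper's: set up the Stein equation via the smoothing machinery (the paper's ``Basic estimates'' subsection), decompose $E[W\cdot\nabla\psi_t(W)]$ by telescoping and symmetrization, Taylor-expand, and insert the derivative bounds \eqref{aht2} and the fourth-derivative analogue from Lemma~\ref{l2}. Two concrete points, however, would derail the argument as you have written it.

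First, your displayed telescoping identity is false. The sum $\sum_i(W-W^{\{i\}})$ does not telescope, and $\sum_i E[(W-W^{\{i\}})_j\,\partial_jf(W)]$ is not equal to $E[W_j\,\partial_jf(W)]$ in general. The paper instead uses $E W=0$ and the independence of $W^{\{1:n\}}=F(X')$ from $W$ to write $E[W\cdot\nabla\psi_t(W)]=E[(W-W^{\{1:n\}})\cdot\nabla\psi_t(W)]$, and then telescopes over the \emph{block} sets $\{1{:}i\}$:
\[
E[W\cdot\nabla\psi_t(W)]=\sum_{i=1}^nE\bigl[(W^{\{1:(i-1)\}}-W^{\{1:i\}})\cdot\nabla\psi_t(W)\bigr].
\]
Swapping $X_i\leftrightarrow X_i'$ in the $i$th summand gives
$E[(W^{\{1:(i-1)\}}-W^{\{1:i\}})\cdot\nabla\psi_t(W)]=E[(W^{\{1:i\}}-W^{\{1:(i-1)\}})\cdot\nabla\psi_t(W^{\{i\}})]$,
and averaging the two forms produces the factor $\tfrac12$ and the difference $\nabla\psi_t(W^{\{i\}})-\nabla\psi_t(W)$, which is then Taylor-expanded in $W^{\{i\}}-W$. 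You clearly know the correct target matrix (you write it down), but the route you display does not reach it.

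Second, a single Taylor expansion of $\nabla\psi_t(W^{\{i\}})-\nabla\psi_t(W)$ leaves a \emph{third}-derivative remainder, which via \eqref{aht3} scales like $t^{-1/2}(\log d)^{3/2}$ times third moments of the increments --- not the fourth-moment quantities $\delta_2,\delta_3(\eta)$ with the $t^{-1}(\log d)^2$ prefactor in the statement. The paper closes this gap exactly as in the proof of Theorem~\ref{t2}: apply the swap $X_i\leftrightarrow X_i'$ a \emph{second} time to the third-derivative remainder, which antisymmetrizes it and permits one further Taylor step up to fourth derivatives of $\psi_t$. Only after this step do the fourth-moment quantities $\delta_2,\delta_3(\eta)$ appear, and only then is the constraint $\eta/\sqrt t\le\sigma_*/\sqrt{\log d}$ used (it is precisely what lets one invoke Lemma~\ref{l2} with the truncation parameter $\eta$ on the fourth derivatives). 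This second symmetrization is the step you flagged as ``the main obstacle'' but did not actually carry out.
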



Using either Theorem~\ref{t2} or \ref{ft4} with a truncation argument, we can improve \cite[Theorem 2.1]{CCKK19} in the strongly non-singular case. 

\begin{corollary}\label{c2}
Let $W=n^{-1/2}\sum_{i=1}^n X_i\in \mathbb{R}^d$, where $\{X_1,\dots, X_n\}$ are centered independent variables with $\Cov(W)=\Sigma$ with diagonal entries all equal to 1 and smallest eigenvalue $\sigma_*^2>0$. Let $Z\sim N(0, \Sigma)$. Suppose that there is a constant $B_n\geq1$ such that $\max_{i,j}E\exp(X_{ij}^2/B_n^2)\leq2$ and $\max_j n^{-1}\sum_{i=1}^nEX_{ij}^4\leq B_n^2$,
where $X_{ij}$ denotes the $j$th component of the vector $X_i$.
Then we have
\begin{align}\label{f15}
\sup_{h=1_A: A\in \mathcal{R}} |E h(W)- E h(Z)|
\leq C\left(\frac{B_n^2\log^4(dn)}{\sigma_*^4 n}\right)^{1/3}.
\end{align}
\end{corollary}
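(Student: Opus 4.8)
\emph{Proof proposal.} The plan is to apply Theorem~\ref{t2} (the argument via Theorem~\ref{ft4} is essentially identical) to a suitable truncation of $W$, to optimize the free parameters $t$ and $\eta$, and to check that the truncation error is negligible.

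First I would dispose of the trivial case: since the left-hand side of \eqref{f15} is always at most $1$, it suffices to treat the regime $B_n^2\log^4(dn)\le\eps_0\,\sigma_*^4 n$ for a small absolute constant $\eps_0$ (fixed at the very end), which forces $n$ to be large relative to $B_n$, $\log d$ and $\sigma_*^{-1}$; this ``non-trivial regime'' is used throughout to discard lower-order terms. \emph{Truncation step:} fix a large absolute constant $K$, set $\tau:=B_n\sqrt{K\log(dn)}$, and let $\hat X_{ij}:=X_{ij}\mathbbm 1(|X_{ij}|\le\tau)$, $\hat X_i:=(\hat X_{i1},\dots,\hat X_{id})^\top$ and $\hat W:=n^{-1/2}\sum_{i=1}^n\hat X_i$. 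The sub-Gaussian hypothesis gives $P(W\ne\hat W)\le 2nd\,(dn)^{-K}$, so $\sup_{h=1_A:A\in\mathcal R}|Eh(W)-Eh(\hat W)|$ is negligible when $K$ is large; moreover $\|n^{-1/2}\sum_iE\hat X_i\|_\infty$ is of the same tiny order because $EX_{ij}=0$. Hence it is enough to approximate $\hat W$, whose coordinates obey $|\hat X_{ij}|\le\tau$ and $n^{-1}\sum_iE\hat X_{ij}^4\le B_n^2$.

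\emph{Exchangeable pair and moment bounds.} Take $I$ uniform on $\{1,\dots,n\}$, let $\{\hat X_i'\}$ be an independent copy of $\{\hat X_i\}$, and set $\hat W':=\hat W-n^{-1/2}(\hat X_I-\hat X_I')$. Then $(\hat W,\hat W')$ is exchangeable, and conditioning first on $\hat X$ shows \eqref{eq:lr} holds with $\Lambda=\tfrac1n I_d$ and $R=-n^{-1/2}\sum_iE\hat X_i$, so $\|R\|_\infty$ — hence the first term of the bound in Theorem~\ref{t2} — is negligible. Since $D=\hat W'-\hat W=n^{-1/2}(\hat X_I'-\hat X_I)$ satisfies $\|D\|_\infty\le 3\tau/\sqrt n$ almost surely, the choice $\eta:=3\tau/\sqrt n$ makes $\Delta_3(\eta)=0$. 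With $\Lambda^{-1}D=\sqrt n(\hat X_I'-\hat X_I)$, subtracting off $\widehat\Sigma:=\Cov(\hat W)$ (which differs from $\Sigma$ negligibly) and using $\max_{j,k}E[\,\cdot\mid\hat W]\le E[\max_{j,k}\,\cdot\mid\hat W]$ gives
\[
\Delta_1\le\|\Sigma-\widehat\Sigma\|_{\max}+\frac1{2n}E\Bigl[\max_{1\le j,k\le d}\Bigl|\textstyle\sum_{i=1}^n\bigl(\hat X_{ij}\hat X_{ik}-E\hat X_{ij}\hat X_{ik}\bigr)\Bigr|\Bigr].
\]
Each summand is bounded by $\tau^2$, and $\sum_i\Var(\hat X_{ij}\hat X_{ik})\le\sum_i(E\hat X_{ij}^4\,E\hat X_{ik}^4)^{1/2}\le nB_n^2$ by the Cauchy--Schwarz inequality and the fourth-moment hypothesis, so Bernstein's inequality over the $d^2$ pairs yields $\Delta_1\le CB_n\sqrt{\log(dn)/n}$. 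For $\Delta_2$, the elementary inequality $|y_jy_ky_ly_m|\le\tfrac14(y_j^4+y_k^4+y_l^4+y_m^4)$ reduces $\Delta_2$, after conditioning, to a constant multiple of $n^{-2}E[\max_j\sum_i(E\hat X_{ij}^4+\hat X_{ij}^4)]$, whence $\Delta_2\le CB_n^2/n$ by the fourth-moment hypothesis and Bernstein's inequality (over $j$, using $\hat X_{ij}^4\le\tau^4$).

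\emph{Optimization.} I would then apply Theorem~\ref{t2} with this $\eta$ and with $t:=\bigl(B_n^2\log d/(\sigma_*^4 n)\bigr)^{2/3}$; a short computation shows $\eta/\sqrt t\le\sigma_*/\sqrt{\log d}$ in the non-trivial regime once $\eps_0$ is small enough given $K$. Then $\Delta_3(\eta)=0$, the term $\tfrac{\overline\sigma}{\underline\sigma}\sqrt t\log d$ equals $(B_n^2\log^4 d/(\sigma_*^4 n))^{1/3}$ (as $\overline\sigma=\underline\sigma=1$), the term with $\Delta_2$ is of the same order by $\Delta_2\le CB_n^2/n$, and — using that $\Delta_1\le CB_n\sqrt{\log(dn)/n}$ carries the extra $n^{-1/2}$ needed to swallow the $(|\log t|\vee1)\log d$ factor — so is the term with $\Delta_1$. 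Summing and adding back the negligible truncation error gives \eqref{f15}. (With Theorem~\ref{ft4} one uses instead that $W^{\{1:i\}}-W^{\{1:(i-1)\}}=W^{\{i\}}-W=n^{-1/2}(X_i'-X_i)$, so $\delta_1,\delta_2,\delta_3$ reduce to the same quantities.)

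\emph{Main obstacle.} The heart of the matter is the two moment bounds of the second step, and in particular extracting the correct $B_n$-dependence: one must exploit the comparatively weak hypothesis $n^{-1}\sum_iEX_{ij}^4\le B_n^2$ rather than the crude $O(B_n^4)$ estimate that sub-Gaussianity alone supplies, since this is exactly what produces $\Delta_2\le CB_n^2/n$ — hence the exponent $B_n^2$, not $B_n^4$, in \eqref{f15}. A secondary technical point, also reliant on the reduction to the non-trivial regime, is to confirm that the $t$ minimizing the bound respects the structural constraint $\eta/\sqrt t\le\sigma_*/\sqrt{\log d}$ and that the parasitic $|\log t|$ factor is absorbed.
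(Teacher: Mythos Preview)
Your proposal is correct and follows essentially the same approach as the paper: truncate at level $\asymp B_n\sqrt{\log(dn)}$, apply the exchangeable-pair/non-linear statistic bound with $\eta$ chosen so that $\Delta_3(\eta)=0$, control $\Delta_1$ and $\Delta_2$ by maximal inequalities for sums (the paper uses Nemirovski's inequality and a CCK lemma where you invoke Bernstein, but the resulting bounds $\Delta_1\lesssim B_n\sqrt{\log(dn)/n}$ and $\Delta_2\lesssim B_n^2/n$ coincide), and optimize $t\asymp(B_n^2\log(dn)/(\sigma_*^4 n))^{2/3}$ under the non-triviality assumption. The only cosmetic difference is that the paper centers the truncated summands explicitly and applies Theorem~\ref{ft4}, whereas you leave them uncentered and absorb the tiny mean into the remainder $R$ of Theorem~\ref{t2}; both are fine.
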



Finally, we consider sums of random vectors with a local dependence structure. 
Unlike in Theorems~\ref{t2} and \ref{ft4}, there is no longer an underlying symmetry that we can exploit.
In the end, we obtain a third-moment error bound with a slower convergence rate.

\begin{theorem}[Error bound for sums of locally dependent variables]\label{ft5}
Let $W=\sum_{i=1}^n X_i$ with $E X_i=0$ and $\Cov(W)=\Sigma$. 
Let $Z\sim N(0,\Sigma)$.
Assume that for each $i\in \{1,\dots, n\}$, there exists $A_i\subset \{1,\dots, n\}$ such that $X_i$ is independent of $\{X_{i'}: i'\notin A_i\}$.
Moreover, assume that for each $i\in \{1,\dots, n\}$ and $i'\in A_i$, there exists $A_{ii'}\subset \{1,\dots, n\}$ such that $\{X_i, X_{i'}\}$ is independent of $\{X_{i''}: i''\notin A_i\}$. Then we have
\besn{\label{f16}
&\sup_{h=1_A: A\in \mathcal{R}} |E h(W)- E h(Z)|\\
\leq & C  \sqrt{\frac{\ol \sigma}{\ul \sigma \sigma_*^3}\sum_{i=1}^n\sum_{i'\in A_i}\sum_{i''\in A_{ii'}}E\left[ \max_{1\leq j,k,l\leq d} (|X_{ij}X_{i'k}X_{i''l}| +|X_{ij}X_{i'k}|E|X_{i''l}|) \right]  (\log d)^{5/2}},
}
where the $\sigma$'s are defined in \eq{eq:sigma}.
\end{theorem}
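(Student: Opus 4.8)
The plan is to run the Stein's-method scheme of Section~\ref{sec2} (G\"otze's approach, combined with the smoothing inequality and the Anderson-Hall-Titterington-type estimate), using the local dependence to handle the ``drift'' term of the Stein operator. By the smoothing step it suffices to bound $|E h(W)-E h(Z)|$ for test functions $h$ that are smooth, approximate $1_A$ for $A\in\mathcal R$ up to a boundary layer of width equal to a free parameter $t>0$, and satisfy the standard derivative bounds in $t$ and $\log d$; the boundary layer costs an anti-concentration term for $Z$ of order $\frac{\ol\sigma}{\ul\sigma}\sqrt{\log d}\,t$. Let $f$ solve the Stein equation $\sum_{j,k}\Sigma_{jk}\partial_{jk}f(w)-\sum_j w_j\partial_j f(w)=h(w)-E h(Z)$, so that
\[
E h(W)-E h(Z)=E\Big[\sum_{j,k}\Sigma_{jk}\partial_{jk}f(W)\Big]-\sum_{i=1}^n E[X_i\cdot\nabla f(W)],
\]
and $f$ inherits the derivative bounds of $h$ through its integral (Ornstein-Uhlenbeck) representation.

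For each $i$ write $W_{A_i}=\sum_{i'\in A_i}X_{i'}$ and $W^{A_i}=W-W_{A_i}=\sum_{i'\notin A_i}X_{i'}$. Since $X_i$ is independent of $W^{A_i}$ and $EX_i=0$, $E[X_i\cdot\nabla f(W^{A_i})]=0$, and a first-order Taylor expansion gives
\[
E[X_i\cdot\nabla f(W)]=\sum_{i'\in A_i}\int_0^1 E\big[X_i^\top\nabla^2 f\big(W-(1-u)W_{A_i}\big)X_{i'}\big]\,du.
\]
For $i'\in A_i$ let $A_{ii'}$, which we may enlarge to contain $A_i\cup A_{i'}$, be as in the hypothesis, and set $W_{A_{ii'}}=\sum_{i''\in A_{ii'}}X_{i''}$, $W^{A_{ii'}}=W-W_{A_{ii'}}$. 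Then $\{X_i,X_{i'}\}$ is independent of $W^{A_{ii'}}$, so $E[X_i^\top\nabla^2 f(W^{A_{ii'}})X_{i'}]=\sum_{j,k}E[X_{ij}X_{i'k}]\,E[\partial_{jk}f(W^{A_{ii'}})]$. Since $E[X_iX_{i'}^\top]=0$ whenever $i'\notin A_i$, we have $\Sigma=\Cov(W)=\sum_i\sum_{i'\in A_i}E[X_iX_{i'}^\top]$, and therefore
\[
E h(W)-E h(Z)=\sum_i\sum_{i'\in A_i}\big(R^{(1)}_{ii'}+R^{(2)}_{ii'}\big),
\]
with $R^{(1)}_{ii'}=\int_0^1 E[X_i^\top(\nabla^2 f(W-(1-u)W_{A_i})-\nabla^2 f(W^{A_{ii'}}))X_{i'}]\,du$ and $R^{(2)}_{ii'}=\sum_{j,k}E[X_{ij}X_{i'k}]\,(E[\partial_{jk}f(W^{A_{ii'}})]-E[\partial_{jk}f(W)])$.

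In both remainders the two arguments of $\nabla^2 f$ differ by a linear combination of the $X_{i''}$, $i''\in A_{ii'}$, so one more Taylor step brings in the third derivatives of $f$: along the relevant shifted arguments, $|R^{(1)}_{ii'}|\leq\sum_{i''\in A_{ii'}}\sum_{j,k,l}E[|\partial_{jkl}f(\,\cdot\,)|\,|X_{ij}X_{i'k}X_{i''l}|]$ and, using $|E[X_{ij}X_{i'k}]|\leq E|X_{ij}X_{i'k}|$, $|R^{(2)}_{ii'}|\leq\sum_{i''\in A_{ii'}}\sum_{j,k,l}E[|\partial_{jkl}f(\,\cdot\,)|]\,E|X_{ij}X_{i'k}|\,E|X_{i''l}|$. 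The Section~\ref{sec2} estimates for such third-derivative expressions — which is where G\"otze's method, the modified Anderson-Hall-Titterington estimate and the choice of smoothing scale enter — show, after summing over $j,k,l$ and over all $i,i',i''$, a bound of the form
\[
\big|E h(W)-E h(Z)\big|\leq \frac{C_1}{t}+C_2\,t,\qquad C_1\lesssim\frac{(\log d)^2}{\sigma_*^3}\,M,\quad C_2\lesssim\frac{\ol\sigma}{\ul\sigma}\sqrt{\log d},
\]
where $M=\sum_i\sum_{i'\in A_i}\sum_{i''\in A_{ii'}}E[\max_{1\leq j,k,l\leq d}(|X_{ij}X_{i'k}X_{i''l}|+|X_{ij}X_{i'k}|E|X_{i''l}|)]$ and $C_2\,t$ is the anti-concentration cost of the boundary layer. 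Optimizing over $t>0$ turns $\frac{C_1}{t}+C_2 t$ into $2\sqrt{C_1C_2}$, which gives the asserted bound $C\sqrt{\frac{\ol\sigma}{\ul\sigma\sigma_*^3}\,M\,(\log d)^{5/2}}$.

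The Taylor expansions and the bookkeeping over the dependency neighbourhoods are routine; the crux is the control of the third-derivative terms. The naive bound $\|\partial_{jkl}f\|_\infty\lesssim t^{-3}$ is far too lossy — it would destroy the square-root rate — so one must instead use the integral representation of $f$, transfer derivatives onto the Gaussian smoothing, and exploit that the test functions are essentially products over coordinates so that taking $\max_{j,k,l}$ inside $M$ costs only powers of $\log d$ rather than powers of $d$. This is precisely the modification of the Anderson-Hall-Titterington estimate and of the Bhattacharya-Rao smoothing inequality referred to in the introduction; pinning down the exponents $(\log d)^2$, $t^{-1}$ and $\sigma_*^{-3}$ there is what makes the final bound come out, and once that lemma is in hand the optimization over $t$ and the remaining estimates are immediate.
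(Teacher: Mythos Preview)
Your proposal is correct and follows essentially the same route as the paper: smooth via the Bhattacharya--Rao/G\"otze scheme, solve the Stein equation, Taylor expand $X_i\cdot\nabla f(W)$ once using the neighbourhood $A_i$ and then a second time using $A_{ii'}$ to isolate third-derivative remainders, and control those via the Anderson--Hall--Titterington estimate $\sum_{j,k,l}|\partial_{jkl}\psi_t(x)|\leq C\sigma_*^{-3}t^{-1/2}(\log d)^{3/2}$ before optimising the smoothing parameter. One cosmetic discrepancy worth noting: you parametrise by the boundary-layer width, whereas the paper parametrises by the Ornstein--Uhlenbeck time $t$ (so that the third-derivative cost scales like $t^{-1/2}$ and the anti-concentration cost like $\sqrt t\,\log d$, rather than your $t^{-1}$ and $t\sqrt{\log d}$); this is just a change of variable and the optimised product $\sqrt{C_1C_2}$ agrees, though your intermediate $\ol\sigma$ bookkeeping is slightly off (an $\ol\sigma$ migrates from $C_1$ to $C_2$ relative to the paper's accounting) without affecting the final bound.
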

Theorem~\ref{ft5} may be improved using a truncation as in Theorems~\ref{t2} and \ref{ft4}. We leave it as it is for simplicity.

\subsection{Literature on multivariate normal approximations}\label{sec1.1}
There is a large literate on multivariate normal approximations.
Here we discuss some of the recent results providing error bounds on various distributional distances with the best-known dependence on dimension. 

Let $W=n^{-1/2}\sum_{i=1}^n X_i\in \mathbb{R}^d$, where $\{X_1,\dots, X_n\}$ are centered independent variables with $\Cov(W)=\Sigma$.
\cite{Be05} proved that, with $Z\sim N(0,\Sigma)$,
\ben{\label{r01}
\sup_{h=1_A: A\in \mathcal{C}} |E h(W)- E h(Z)|\leq  \frac{C d^{1/4}}{n^{3/2}} \sum_{i=1}^n E[|\Sigma^{-1/2} X_i|^3],
}
where $\mathcal{C}$ is the collection of all (measurable) convex sets of $\mathbb{R}^d$ and $|\cdot|$ denotes the Euclidean norm when applied to a vector.
The bound \eq{r01} is optimal up to the $d^{1/4}$ factor (\cite{Na76}).
See \cite{Ra19} for a bound with explicit constant.
In the typical case where $E[|\Sigma^{-1/2}X_i|^3]$ is of the order $O(d^{3/2})$, the error bound in \eq{r01} is of the order $O(\frac{d^{7/2}}{n})^{1/2}$. \cite{CCK13} and subsequent works showed that by restricting to the class of hyperrectangles, one may obtain much better dependence on $d$.

If we restrict to the class of Euclidean balls and assume $\Sigma=I_d$, then we can obtain a bound as in \eq{r01} but without the factor $d^{1/4}$. This follows from \cite[Theorem 1.3 and Example 1.2]{Ra19} and \cite[Remark 2.1]{Sa72}, for example.
For Euclidean balls centered at 0, it is known that (cf. \cite{GoZa14}) the dependence on $n$ may be improved from $1/\sqrt{n}$ to $1/n$ for $d\geq 5$, which is in general the smallest possible dimension for such an improvement. See also \cite{Es45}, \cite{BeGo97}, \cite{GoUl03}, \cite{BoGoUl06} and \cite{PrUl13} for earlier and related results.
We do not know any results with explicit dependence on $d$ and such improved dependence on $n$.

Another class of distributional distances of interest is the $L^p$ transportation distance for a number $p\geq 1$, also known as the Kantorovich distance or the $p$-Wasserstein distance. 
For two probability measures $\mu$ and $\nu$ on $\mathbb{R}^d$, it is defined to be
\be{
\mathcal{W}_p(\mu, \nu):=\left( \inf_{\gamma\in \Lambda(\mu,\nu)} \int |x-y|^p d\gamma(x,y)  \right)^{1/p},
}
where $\Lambda(\mu,\nu)$ is the space of all probability measures on $\mathbb{R}^d\times \mathbb{R}^d$ with $\mu$ and $\nu$ as marginals.
If $X$ and $Y$ are random variables with distributions $\mu$ and $\nu$, respectively, we will also write
\be{
\mathcal{W}_p(X, Y)=\mathcal{W}_p(\mu,\nu).
}

Let $W=n^{-1/2}\sum_{i=1}^n X_i\in \mathbb{R}^d$, where $\{X_1,\dots, X_n\}$ are centered i.i.d.\ variables with $\Cov(W)=\Sigma$, and let $Z\sim N(0,\Sigma)$. Suppose $|X_i|\leq \beta$ almost surely for some $\beta>0$. \cite{EMZ18} proved that
\ben{\label{r02}
\mathcal{W}_2(W, Z)\leq \frac{\beta \sqrt{d} \sqrt{32+2\log_2(n)}}{\sqrt{n}}.
}
The bound in \eq{r02} is optimal up to the $\log_2(n)$ factor (\cite{Zh18}). See \cite{CFP19} and \cite{EMZ18} for results for the log-concave case.
Following the proof of Proposition 1.4 of \cite{Zh18}, these bounds on the $L^2$ transportation distance can be used to deduce a bound on $\sup_{h=1_A: A\in \mathcal{R}} |E h(W)- E h(Z)|$. 
For example, we can obtain the following proposition. We defer its proof to the end of the Appendix.

\begin{proposition}\label{p2}
Let $T$ be any $\mathbb{R}^d$-valued random variable. 
Let $Z\sim N(0,\Sigma)$ where $\Sigma_{jj}\geq 1, \forall\ 1\leq j\leq d$.
Then,
\be{
\sup_{h=1_A: A\in \mathcal{R}} |E h(T)- E h(Z)|\leq C(\log d)^{1/3} \mathcal{W}_2(T, Z)^{2/3}.
}
\end{proposition}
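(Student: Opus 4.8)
The plan is to reduce the hyperrectangle statement to a comparison of smoothed indicators and then exploit the $L^2$ transportation bound together with a concentration (anti-concentration) property of the Gaussian $Z$ near the boundary of a rectangle. First I would fix a rectangle $A=\prod_{j=1}^d(a_j,b_j)\in\mathcal R$ and, for a smoothing parameter $\delta>0$, introduce the inner and outer enlargements $A^{-\delta}\subset A\subset A^{\delta}$, where $A^{\delta}$ widens each interval by $\delta$ on both sides (and $A^{-\delta}$ shrinks it). Using a Lipschitz function $g_{A,\delta}$ with $1_{A^{-\delta}}\le g_{A,\delta}\le 1_{A^{\delta}}$ that is $C/\delta$-Lipschitz in the $\ell^\infty$-to-$\mathbb R$ sense (a product of one-dimensional ramp functions), I can sandwich $P(T\in A)$ between $E g_{A,\delta}(T)$-type quantities and similarly for $Z$, so that
\be{
|P(T\in A)-P(Z\in A)|\le \frac{C}{\delta}\,\mathcal W_1(T,Z)+\sup_{A\in\mathcal R}P(Z\in A^{\delta}\setminus A^{-\delta}).
}
Here the first term comes from the Lipschitz bound $|E g(T)-E g(Z)|\le \mathrm{Lip}(g)\,\mathcal W_1(T,Z)$ and the coupling definition of $\mathcal W_1$, and I would then pass from $\mathcal W_1$ to $\mathcal W_2$ via $\mathcal W_1\le\mathcal W_2$.

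Second, I would bound the Gaussian boundary probability. Since $\Sigma_{jj}\ge 1$ for all $j$, each coordinate $Z_j$ is a mean-zero Gaussian with variance at least $1$, hence its density is bounded by $1/\sqrt{2\pi}$, so for each $j$, $P(Z_j\in(a_j-\delta,a_j+\delta)\cup(b_j-\delta,b_j+\delta))\le C\delta$. A union bound over the $2d$ faces gives $P(Z\in A^{\delta}\setminus A^{-\delta})\le C d\,\delta$. This is the crude estimate; it already yields, after optimizing $\delta$, a bound of order $(\,d\,\mathcal W_2\,)^{1/2}$, which is not what we want. The key refinement is to use the standard Gaussian anti-concentration for rectangles in the form $P(Z\in A^\delta\setminus A^{-\delta})\le C(\log d)^{1/2}\,\delta$ when all $\Sigma_{jj}\ge1$ — this is Nazarov's inequality (or the Chernozhukov–Chetverikov–Kato anti-concentration bound), which says precisely that the Gaussian measure of a $\delta$-annular neighborhood of a rectangle is $O(\delta\sqrt{\log d})$ rather than $O(\delta d)$. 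I would cite this as a known fact; it is the one genuinely nontrivial external input.

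Third, with the improved boundary bound in hand, I would optimize over $\delta$: the bound reads $C\delta^{-1}\mathcal W_2(T,Z)+C\delta\sqrt{\log d}$, wait — that scaling would give $(\log d)^{1/4}\mathcal W_2^{1/2}$, not the claimed $(\log d)^{1/3}\mathcal W_2^{2/3}$. The correct route, following Proposition~1.4 of \cite{Zh18}, is to use the quadratic cost more carefully: with an optimal coupling $(T,Z')$ of $(T,Z)$ and the event $\{\|T-Z'\|_\infty>\delta\}$, one writes $P(T\in A)\le P(Z'\in A^\delta)+P(\|T-Z'\|_\infty>\delta)$, and controls the last probability by Markov applied to the squared Euclidean norm: $P(\|T-Z'\|_\infty>\delta)\le P(|T-Z'|>\delta)\le \delta^{-2}\mathcal W_2(T,Z)^2$. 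Combining with $P(Z\in A^\delta\setminus A^{-\delta})\le C\delta\sqrt{\log d}$ (keeping a $\sqrt{\log d}$, or even just $\sqrt{\log d}$ is enough since one can also use the crude $d$ bound if one is willing to lose) gives
\be{
|P(T\in A)-P(Z\in A)|\le C\,\delta\sqrt{\log d}+\frac{\mathcal W_2(T,Z)^2}{\delta^2},
}
and choosing $\delta=\big(\mathcal W_2(T,Z)^2/\sqrt{\log d}\big)^{1/3}$ balances the two terms to give $C(\log d)^{1/3}\mathcal W_2(T,Z)^{2/3}$, as claimed. Taking the supremum over $A\in\mathcal R$ completes the proof.

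The main obstacle, and the step I would be most careful about, is the anti-concentration estimate for the Gaussian annulus $A^\delta\setminus A^{-\delta}$ with the right $\sqrt{\log d}$ (or at worst $\log d$) dependence under only the hypothesis $\Sigma_{jj}\ge1$: one must invoke Nazarov's inequality correctly (it requires the diagonal lower bound but no bound on off-diagonal entries and no non-singularity of $\Sigma$), and make sure the annulus — which is a difference of two rectangles, not a rectangle — is handled, e.g. by covering it with $2d$ slabs and applying the one-dimensional anti-concentration to each slab, then summing. Everything else (the Lipschitz sandwiching, $\mathcal W_1\le\mathcal W_2$, the Markov step, the optimization in $\delta$) is routine.
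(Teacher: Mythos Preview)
Your final argument---optimal coupling, the Markov bound $P(|T-Z'|>\delta)\le\delta^{-2}\mathcal W_2(T,Z)^2$, Nazarov's anti-concentration $P(Z\in A^\delta\setminus A)\le C\delta\sqrt{\log d}$, then optimizing $\delta$---is exactly the paper's proof (which in turn follows Zhai). One correction to your closing remark: the annulus $A^\delta\setminus A^{-\delta}$ should \emph{not} be handled by covering with $2d$ slabs and summing, since that recovers only the crude $O(d\,\delta)$ bound you already rejected; the right way, and what the paper does, is simply to apply Nazarov's inequality (Lemma~\ref{l1} with $Y=(Z^\top,-Z^\top)^\top$) directly to the nested rectangles $A^{-\delta}\subset A\subset A^\delta$, which immediately gives $P(Z\in A^\delta)-P(Z\in A^{-\delta})\le C\delta\sqrt{\log d}$.
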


Applying Proposition \ref{p2} to \eq{r02}, we have the following corollary. 

\begin{corollary}\label{c3}
Let $W=n^{-1/2}\sum_{i=1}^n X_i\in \mathbb{R}^d$, where $\{X_1,\dots, X_n\}$ are centered i.i.d.\ variables with $\Cov(W)=\Sigma$. Suppose $\Sigma_{jj}\geq 1, \forall\ 1\leq j\leq d$. Suppose further that $|X_i|\leq \beta$ almost surely for some $\beta>0$.
Let $Z\sim N(0,\Sigma)$.
Then,
\be{
\sup_{h=1_A: A\in \mathcal{R}} |E h(W)- E h(Z)|\leq C (\log d)^{1/3} \frac{d^{1/3} \beta^{2/3}}{n^{1/3}}(1+\log n)^{1/3}.
}
\end{corollary}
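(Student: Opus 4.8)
The plan is to combine the Wasserstein-type bound \eqref{r02} of \cite{EMZ18} with the smoothing estimate of Proposition~\ref{p2}. Since $\{X_1,\dots,X_n\}$ are centered i.i.d.\ with $|X_i|\leq\beta$ a.s., the hypotheses of \eqref{r02} are met, so $\mathcal{W}_2(W,Z)\leq \beta\sqrt{d}\sqrt{32+2\log_2 n}/\sqrt{n}$. Moreover, the assumption $\Sigma_{jj}\geq 1$ for all $j$ is exactly what is required to invoke Proposition~\ref{p2} with $T=W$. Chaining the two bounds gives
\[
\sup_{h=1_A:A\in\mathcal{R}}|Eh(W)-Eh(Z)|
\leq C(\log d)^{1/3}\,\mathcal{W}_2(W,Z)^{2/3}
\leq C(\log d)^{1/3}\left(\frac{\beta\sqrt{d}\sqrt{32+2\log_2 n}}{\sqrt{n}}\right)^{2/3}.
\]

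It then remains only to simplify the right-hand side. Raising to the $2/3$ power turns $\beta\sqrt{d}$ into $\beta^{2/3}d^{1/3}$ and $1/\sqrt{n}$ into $1/n^{1/3}$, so the main term is $C(\log d)^{1/3}d^{1/3}\beta^{2/3}n^{-1/3}$. The factor $(32+2\log_2 n)^{1/3}$ is, up to an absolute constant coming from changing the base of the logarithm and absorbing the additive constant $32$, bounded by $C(1+\log n)^{1/3}$; this absolute constant gets folded into $C$. Collecting everything yields precisely
\[
\sup_{h=1_A:A\in\mathcal{R}}|Eh(W)-Eh(Z)|\leq C(\log d)^{1/3}\frac{d^{1/3}\beta^{2/3}}{n^{1/3}}(1+\log n)^{1/3},
\]
which is the claim.

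This is essentially a bookkeeping exercise once Proposition~\ref{p2} and \eqref{r02} are in hand, so there is no genuine obstacle; the only point requiring a line of care is verifying that the hypotheses line up — in particular that the normalization $\Cov(W)=\Sigma$ with $\Sigma_{jj}\geq 1$ is consistent between the two inputs (it is, since neither \eqref{r02} nor Proposition~\ref{p2} imposes anything beyond $\Sigma_{jj}\geq 1$ and the boundedness $|X_i|\leq\beta$), and that the sub-additivity/monotonicity used to pass from $\sqrt{32+2\log_2 n}$ to $C(1+\log n)^{1/2}$ is valid for all $n\geq 1$. I would state these two reductions explicitly and then present the displayed chain of inequalities above as the proof.
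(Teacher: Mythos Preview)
Your proposal is correct and matches the paper's own argument exactly: the paper simply states that Corollary~\ref{c3} follows by applying Proposition~\ref{p2} to the bound \eqref{r02}, which is precisely the chain of inequalities you wrote out. Your verification of the hypotheses and the algebraic simplification of $(32+2\log_2 n)^{1/3}$ into $C(1+\log n)^{1/3}$ are accurate and complete the argument.
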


Since the $\mathcal{W}_2$ error bound in \eq{r02} scales like $\sqrt{d}$, we see that such a deduced bound again requires $d$ to be sub-linear in $n$. Allowing to go well beyond this restriction is a key feature of this paper.

If we assume in addition that the mixed third moments of $X_1$ are all equal to zero, then it is possible to improve the dependence on $n$ from $1/\sqrt{n}$ to $1/n$. See, for example, \cite{BoChGo13} for such improved rate in total variation in dimension one and \cite{Fa18} for results on the 2-Wasserstein distance in multi-dimensions.

\section{Proofs}\label{sec2}

\subsection{Lemmas}

We first state four lemmas that are needed in the proofs of the main results. 
Set $R(0; \epsilon):=\{x\in\mathbb R^d:\|x\|_\infty\leq\epsilon\}$ for $\epsilon>0$. 
Throughout this section, we denote by $\phi$ the density function of the standard $d$-dimensional normal distribution.

\begin{lemma}[Gaussian anti-concentration inequality]\label{l1}
Let $Y$ be a centered Gaussian vector in $\mathbb{R}^d$ such that $\min_{1\leq j\leq d}EY_j^2\geq\underline{\sigma}^2$ for some $\underline{\sigma}>0$. Then, for any $y\in\mathbb{R}^d$ and $\varepsilon>0$,
\[
P(Y\leq y+\varepsilon)-P(Y\leq y)\leq\frac{\varepsilon}{\underline{\sigma}}\left(\sqrt{2\log d}+2\right),
\]
where $\{Y\leq y\}:=\{Y_j\leq y_j: 1\leq j\leq d\}$.
\end{lemma}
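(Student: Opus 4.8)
## Proof Plan for Lemma~\ref{l1}

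The plan is to reduce the $d$-dimensional anti-concentration statement to a one-dimensional Gaussian tail estimate via a union bound over coordinates, combined with a maximal-inequality argument. First I would write the difference of probabilities as
\[
P(Y\leq y+\varepsilon)-P(Y\leq y)=P\bklr{Y\leq y+\varepsilon,\ Y\not\leq y}\leq\sum_{j=1}^dP\bklr{y_j<Y_j\leq y_j+\varepsilon,\ Y\leq y+\varepsilon}.
\]
The naive bound $\sum_j P(y_j<Y_j\leq y_j+\varepsilon)\leq d\varepsilon/(\underline\sigma\sqrt{2\pi})$ loses a factor of $d$ versus $\sqrt{\log d}$, so the key is to exploit the constraint $\{Y\leq y+\varepsilon\}$ on the \emph{other} coordinates. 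The standard trick (going back to Nazarov, and used by \cite{CCK17,CCKK19}) is to realize that on the event where the maximum of the normalized coordinates $Y_j/\sqrt{EY_j^2}$ is large, the conditional density of the arg-max coordinate near its conditioning value is controlled, while on the event where the maximum is moderate only a bounded number of coordinates can be ``active.''

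The cleanest route is via the Gaussian concentration / log-concavity of the distribution function. Let $F(y):=P(Y\leq y)$ and consider $g(\varepsilon):=F(y+\varepsilon\mathbf 1)$ as a function of the scalar $\varepsilon$. One shows that $\varepsilon\mapsto\log g(\varepsilon)$, or rather an appropriate one-sided derivative bound, yields
\[
\frac{d}{d\varepsilon}F(y+\varepsilon\mathbf 1)=\sum_{j=1}^d P(Y_j=y_j+\varepsilon,\ Y_k\leq y_k+\varepsilon\ \forall k\neq j)\cdot(\text{density factor}),
\]
and each such term is bounded by the $j$-th marginal density at $y_j+\varepsilon$ times $P(\text{other coords}\leq y+\varepsilon\mid Y_j=y_j+\varepsilon)$. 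Writing $\sigma_j^2:=EY_j^2\geq\underline\sigma^2$, the marginal density is at most $1/(\sigma_j\sqrt{2\pi})$, but to beat the factor $d$ one instead bounds $\sum_j P(Y_j/\sigma_j \in[\,y_j/\sigma_j,\ (y_j+\varepsilon)/\sigma_j\,],\ Y\leq y+\varepsilon)$ by splitting according to whether the common threshold is above or below $\sqrt{2\log d}$: above it, each marginal event has probability $\leq \varepsilon/\sigma_j$ times the Gaussian tail density which decays like $d^{-1}$ up to lower-order terms, killing the sum over $j$; below it, the event $\{Y\leq y+\varepsilon\}$ forces all large normalized coordinates to lie in a window of size $O(\varepsilon/\underline\sigma)$, and a direct first-moment computation gives the $\sqrt{2\log d}+2$ constant. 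Integrating in $\varepsilon$ then gives the stated linear-in-$\varepsilon$ bound.

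The main obstacle I anticipate is getting the constant exactly $\sqrt{2\log d}+2$ rather than an unspecified absolute constant times $\sqrt{\log d}$; this requires the careful two-regime split above with the threshold chosen precisely at $\sqrt{2\log d}$, plus the elementary estimate $\int_t^\infty e^{-u^2/2}\,du\leq e^{-t^2/2}/t$ and $\varphi(t)=e^{-t^2/2}/\sqrt{2\pi}$ with $\sqrt{2\pi}>2$, so that the ``high'' regime contributes at most $2\varepsilon/\underline\sigma$ after summing the $d$ tail terms (each of size roughly $\varepsilon/(\underline\sigma d)$ times a bounded factor) and the ``low'' regime contributes $\sqrt{2\log d}\cdot\varepsilon/\underline\sigma$. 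An alternative, possibly shorter, argument would be to cite the known form of Nazarov's inequality and then rescale each coordinate by $1/\sigma_j$ to reduce to the unit-variance case $\min_j EY_j^2=1$; since rescaling by a diagonal matrix with entries $\geq 1$ only shrinks the window $y+\varepsilon\mathbf 1$ coordinatewise, monotonicity of $F$ in each argument does \emph{not} immediately apply, so one must instead compare with the Gaussian whose variances are all exactly $\underline\sigma^2$ using Slepian-type or Gaussian-interpolation comparison — I would avoid that and stick with the direct two-regime computation.
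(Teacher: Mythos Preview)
The paper does not prove this lemma; it simply cites \cite{CCK17nazarov}, where the Nazarov inequality is established in exactly the form stated here. Your proposed route --- differentiate $\varepsilon\mapsto P(Y\leq y+\varepsilon\mathbf{1})$, write the derivative as $\sum_j\sigma_j^{-1}\phi_1\bklr{(y_j+\varepsilon)/\sigma_j}\,P\bklr{Y_k\leq y_k+\varepsilon\ \forall k\neq j\mid Y_j=y_j+\varepsilon}$, and split the sum according to whether the standardized threshold exceeds $\sqrt{2\log d}$ --- is precisely the argument in that reference, so you are reconstructing the cited proof rather than offering an alternative.

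Two corrections to your sketch. First, the reduction to equal variances that you reject at the end actually works and is the cleanest first step: setting $\tilde Y_j:=Y_j/\sigma_j$ and $\tilde y_j:=y_j/\sigma_j$, one has
\[
\{Y\leq y+\varepsilon\}\setminus\{Y\leq y\}
=\{\tilde Y_j\leq\tilde y_j+\varepsilon/\sigma_j\ \forall j\}\setminus\{\tilde Y_j\leq\tilde y_j\ \forall j\}
\subset\{\tilde Y_j\leq\tilde y_j+\varepsilon/\underline\sigma\ \forall j\}\setminus\{\tilde Y_j\leq\tilde y_j\ \forall j\},
\]
so it suffices to treat the unit-variance case with window $\varepsilon/\underline\sigma$; no Slepian comparison is needed. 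Second, your description of the ``low'' regime (``the event $\{Y\leq y+\varepsilon\}$ forces all large normalized coordinates to lie in a window'') is not the mechanism that produces the constant $\sqrt{2\log d}$. In \cite{CCK17nazarov} the bound for indices with small threshold comes instead from controlling the conditional probability factor and showing that the resulting sum over $j$ is dominated by an expression of the form $\sum_j\bar\phi_1(x_j)\prod_k\Phi_1(x_k)$, whose supremum over $x$ is $O(\sqrt{\log d})$ --- exactly the computation carried out in the paper's own proof of Lemma~\ref{L2} with $r=1$, $\eta=0$. The ``window'' heuristic you describe does not by itself deliver a bound independent of $d$ for that regime.
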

A proof of Lemma \ref{l1} is found in \cite{CCK17nazarov}.

 
\begin{lemma}[Modification of (2.10) of \cite{AnHaTi98}]\label{L2}
Let $K\geq0$ be a constant and set $\eta=\eta_d:=K/\sqrt{\log d}$. 
Then, for all $r\in\mathbb{N}$, there is a constant $C_{K,r}>0$ depending only on $K$ and $r$ such that 
\begin{equation*}
\sup_{A\in\mathcal{R}}\sum_{j_1,\dots,j_r=1}^d\sup_{y\in R(0;\eta)}\left|\int_A\partial_{j_1,\dots, j_r}\phi(z+y)dz\right|
\leq C_{K,r} (\log d)^{r/2}.
\end{equation*}
\end{lemma}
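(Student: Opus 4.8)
The plan is to reduce the $d$-dimensional integral to a product over coordinates and exploit the tensor structure of the Gaussian density. Since $\phi(z+y)=\prod_{i=1}^d\varphi(z_i+y_i)$, where $\varphi$ is the standard one-dimensional Gaussian density, the mixed partial $\partial_{j_1,\dots,j_r}\phi$ factors as a product in which the coordinates appearing among $j_1,\dots,j_r$ carry Hermite-type factors $\varphi^{(m)}(z_i+y_i)$ (with $m$ the multiplicity of that coordinate in the list) and all other coordinates carry plain $\varphi(z_i+y_i)$. Writing $A=\prod_{i=1}^d(a_i,b_i)$, the integral over $A$ likewise factors; each coordinate not in $\{j_1,\dots,j_r\}$ contributes a factor $\int_{a_i}^{b_i}\varphi(z_i+y_i)\,dz_i\in[0,1]$, so those factors only help. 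The task therefore reduces to bounding, for each coordinate $i$ that appears with multiplicity $m_i\geq 1$, the quantity $\sup_{|y_i|\le\eta}\big|\int_{a_i}^{b_i}\varphi^{(m_i)}(z_i+y_i)\,dz_i\big|$ by a constant depending only on $K$ (equivalently on $\eta\sqrt{\log d}=K$) and $m_i$.

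Next I would carry out the one-dimensional estimate. The key point is that $\int_{a}^{b}\varphi^{(m)}(z+y)\,dz=\varphi^{(m-1)}(b+y)-\varphi^{(m-1)}(a+y)$, so after integrating once we are left with a bounded expression; more precisely $\sup_{x\in\mathbb R}|\varphi^{(m-1)}(x)|=:c_{m-1}<\infty$ gives the bound $2c_{m-1}$, uniformly in $a,b,y$. Actually, to get the right power of $\log d$ one must be slightly more careful and split the count: among $j_1,\dots,j_r$ there are at most $r$ distinct coordinates, and for each we pay a factor $C_{m_i}\le C_r$. Summing $\sum_{j_1,\dots,j_r=1}^d$ then means summing over all ways to choose an ordered $r$-tuple of coordinates; grouping by the set $S$ of distinct coordinates used (with $|S|=s\le r$) and the pattern of multiplicities, the number of tuples with a given support of size $s$ is $O(d^s)$, but for each such tuple only $s\le r$ nontrivial one-dimensional integrals appear — so naively this is $O(d^r)$, which is far too big. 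The resolution is that for a fixed hyperrectangle $A$ one does \emph{not} get a free $O(1)$ per coordinate: $\sum_{i=1}^d\sup_{|y_i|\le\eta}\big|\varphi^{(m-1)}(b_i+y_i)-\varphi^{(m-1)}(a_i+y_i)\big|$ is itself $O(\sqrt{\log d})$, not $O(d)$, because $\varphi^{(m-1)}$ has bounded total variation and the intervals $(a_i,b_i)$, together with the $\eta$-enlargement, behave like a partition-type sum; this is exactly the mechanism in (2.10) of \cite{AnHaTi98}. So the correct bookkeeping is: choose the support $S$ with $|S|=s$, and for the $s$ coordinates in $S$ use the bound that their joint contribution, summed over which coordinates they are, is $O((\sqrt{\log d})^{s})$ (one factor $\sqrt{\log d}$ per distinct differentiated coordinate, coming from the telescoping-plus-bounded-variation argument), while the combinatorial count of multiplicity patterns realizing support size $s$ from an $r$-tuple is a constant $C_r$. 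Since $s\le r$ and $\log d>1$, this yields the claimed $C_{K,r}(\log d)^{r/2}$.

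I expect the main obstacle to be precisely this last point: showing that summing the one-dimensional boundary terms $\varphi^{(m-1)}(b_i+y_i)-\varphi^{(m-1)}(a_i+y_i)$ over $i=1,\dots,d$, uniformly over $y\in R(0;\eta)$ and over the hyperrectangle $A$, costs only $O(\sqrt{\log d})$ rather than $O(d)$. This is where the hypothesis $\eta=K/\sqrt{\log d}$ enters essentially: one covers $\mathbb R$ by $O(\sqrt{\log d})$ intervals of length comparable to $1/\sqrt{\log d}$ outside a large-deviation window $|x|\lesssim\sqrt{\log d}$ where $\varphi^{(m-1)}$ and its derivative are already $o(1/d)$, and inside the window the $\eta$-shifted total variation of $\varphi^{(m-1)}$ over the disjoint intervals $(a_i,b_i)$ telescopes, the $\eta$-overlaps contributing the extra $\eta\cdot d\cdot\|\varphi^{(m)}\|_\infty$-type term which is $O(1)$ by the choice of $\eta$. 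Once this uniform $O(\sqrt{\log d})$ bound is in hand for each differentiated coordinate, the product-and-sum structure described above closes the estimate, with $C_{K,r}$ absorbing the finitely many combinatorial factors and the sup-norms $\|\varphi^{(m)}\|_\infty$, $m\le r$.
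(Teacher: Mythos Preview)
There is a genuine gap in the ``resolution'' step, and it is fatal. You first bound the non-differentiated coordinates' factors $\int_{a_k}^{b_k}\varphi(z_k+y_k)\,dz_k\in[0,1]$ by $1$ and discard them, and then assert that the remaining sum
\[
\sum_{i=1}^d\sup_{|y_i|\le\eta}\bigl|\varphi^{(m-1)}(b_i+y_i)-\varphi^{(m-1)}(a_i+y_i)\bigr|
\]
is $O(\sqrt{\log d})$ by a ``bounded total variation / telescoping'' argument. This is false. The intervals $(a_i,b_i)$ are the coordinate sides of a single hyperrectangle in $\mathbb{R}^d$; they bear no relation to one another and are certainly not disjoint subintervals of $\mathbb{R}$, so nothing telescopes. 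Taking $a_i=0$, $b_i=\infty$ for every $i$ already gives a sum equal to $d\,\varphi(0)$, which is $\Theta(d)$, not $O(\sqrt{\log d})$. The reference to ``the mechanism in (2.10) of \cite{AnHaTi98}'' is a misreading: that argument does \emph{not} drop the product factors.

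The correct mechanism is precisely the interaction you threw away. In the paper's proof (for $r=1$, $\eta=0$, say) one keeps the product and bounds
\[
\sum_{j=1}^d|\phi_1(b_j)-\phi_1(a_j)|\prod_{k\neq j}\bigl(\Phi_1(b_k)-\Phi_1(a_k)\bigr)
\le\sup_{x\in[0,\infty)^d}\Bigl(\sum_{j=1}^d\bar\phi_1(x_j)\Bigr)\prod_{k=1}^d\Phi_1(x_k),
\]
with $\bar\phi_1=\phi_1/\Phi_1$. The first-order conditions for the right-hand side force all coordinates of the maximizer to coincide at some $u^*$ satisfying $u^*=(d-1)\bar\phi_1(u^*)$, whence $u^*=O(\sqrt{\log d})$ and the supremum is $O(\sqrt{\log d})$. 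In the counterexample above, the product is $(1/2)^{d-1}$, which is what kills the order-$d$ sum; once you bound the product by $1$, no recovery is possible. For general $r$ and $\eta>0$ the paper runs the same optimization with $\bar\phi_1$ replaced by suitable $\bar h_\nu=H_{\nu-1}\phi_1/\Phi_1$ and an extra factor $\Lambda(u)=\Phi_1(u)/\Phi_1(u+2\eta)$, combining the distinct differentiated coordinates via the weighted AM--GM inequality; the choice $\eta=K/\sqrt{\log d}$ enters only to keep $\lambda(u^*)=e^{2u^*\eta+2\eta^2}$ bounded at $u^*\asymp\sqrt{\log d}$. Your proposal needs to be rebuilt around this product--sum interaction rather than a coordinatewise bound.
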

The special version of Lemma~\ref{L2} with $\eta=0$ is found in the proof of (2.10) of \cite{AnHaTi98}. 
Introduction of the parameter $\eta$ is motivated by a standard argument used in the Chernozhukov-Chetverikov-Kato theory to efficiently control maximal moments appearing in normal approximation error bounds; see Equation (24) in \cite{CCK13}, for example. 

To clarify the structure of the proof, we first give a proof of the case with $r=1$ and $\eta=0$ here. 
The proof of the general case follows the same strategy and will be given in Section \ref{sec3-l2}, where we need a few technical lemmas and more complicated notation.
\begin{proof}[Proof of Lemma \ref{L2} with $r=1$ and $\eta=0$]
We denote by $\phi_1$ and $\Phi_1$ the density and distribution function of the standard normal distribution, respectively. We set $\bar\phi_1(u):=\phi_1(u)/\Phi_1(u)$. 

For any $A=\prod_{j=1}^d(a_j,b_j)\in\mathcal{R}$, we have, by considering $x_j:=|a_j|\wedge|b_j|$ in the first inequality, 
\besn{\label{AHT-est}
\sum_{j=1}^d\left|\int_A\partial_{j}\phi(z)dz\right|
&=\sum_{j=1}^d\left|\phi_1(b_j)-\phi_1(a_j)\right|\prod_{k:k\neq j}\left(\Phi_1(b_k)-\Phi_1(a_k)\right)\\
&\leq\sup_{x\in\mathbb{R}^d}\sum_{j=1}^d\phi_1(x_j)\prod_{k:k\neq j}\Phi_1(x_k)
=\sup_{x\in\mathbb{R}^d}\sum_{j=1}^d\bar\phi_1(x_j)\prod_{k=1}^d\Phi_1(x_k).
}
Therefore, it suffices to prove $\sup_{x\in\mathbb{R}^d}f(x)=O(\sqrt{\log d})$, where
\[
f(x)=F(x)G(x)\quad\text{with}\quad
F(x)=\sum_{j=1}^d\bar\phi_1(x_j)
\quad\text{and}\quad
G(x)=\prod_{k=1}^d\Phi_1(x_k).
\]

The remaining proof proceeds as follows. We first show that $f$ has a maximizer $x^*$ satisfying $x^*_1=\cdots=x^*_d=:u^*$. From this, we will see $\sup_{x\in\mathbb{R}^d}f(x)=O(u^*)$ and $u^*=O(\sqrt{\log d})$ as $d\to\infty$. This completes the proof. 

From the last identity in \eqref{AHT-est} we have $\sup_{x\in\mathbb{R}^d}f(x)=\sup_{x\in[0,\infty)^d}f(x)$. Also, noting that $\bar\phi_1'(u)=-(u+\bar\phi_1(u))\bar\phi_1(u)$, we have
\begin{align*}
\partial_lf(x)&=\bar\phi_1'(x_l)G(x)+F(x)\bar\phi_1(x_l)G(x)\\
&=\{-(x_l+\bar\phi_1(x_l))+F(x)\}\bar\phi_1(x_l)G(x).
\end{align*}  
In particular, $\partial_lf(x)<0$ if $x\in[0,\infty)^d$ and $x_l>d$ because $F(x)\leq d\sqrt{2/\pi}\leq d$ for all $x\in[0,\infty)^d$. This means $f(x)\leq f(x_1\wedge d,\dots,x_d\wedge d)$ for all $x\in[0,\infty)^d$, and thus we obtain $\sup_{x\in\mathbb{R}^d}f(x)=\sup_{x\in[0,d]^d}f(x)$. 
As a result, $f$ has a maximizer $x^*$ satisfying $x^*\in[0,d]^d$ and $\nabla f(x^*)=0$. The latter equation yields
\begin{equation}\label{1st-order}
x^*_1+\bar\phi_1(x^*_1)
=\cdots
=x^*_d+\bar\phi_1(x^*_d)
=F(x^*).
\end{equation}
Now, it is easy to see that the function $[0,\infty)\ni u\mapsto u+\bar\phi_1(u)\in(0,\infty)$ is strictly increasing (this is in fact a special case of Lemma \ref{l2-2}). 
Consequently, we obtain $x_1^*=\cdots=x_d^*=:u^*$. 

Now, from \eqref{1st-order} we have $u^*=(d-1)\bar\phi_1(u^*)$. So we obtain
\[
\sup_{x\in\mathbb{R}^d}f(x)=f(x^*)=\frac{d}{d-1}u^*\Phi_1(u^*)^d
\leq\frac{d}{d-1}u^*.
\] 
Therefore, we complete the proof once we prove $u^*=O(\sqrt{\log d})$. 

Setting $g_2(u):=u-(d-1)\bar\phi_{1}(u)$, we have $g_2(u^*)=0$ and $g_2(\sqrt{2\log d})\to\infty$ as $d\to\infty$. Since $g_2$ is increasing on $[0,\infty)$, we conclude $u^*=O(\sqrt{\log d})$ as $d\to\infty$. 
\end{proof}

From Lemma \ref{L2}, we can obtain the following lemma.
For any bounded measurable function $f:\mathbb{R}^d\to\mathbb{R}$ and $\sigma>0$, we define the function $\mcl{N}_\sigma f:\mathbb{R}^d\to\mathbb{R}$ by
\ben{\label{def:n}
\mcl{N}_\sigma f(x)=\int_{\mathbb{R}^d}f(x+\sigma z)\phi(z)dz
=\int_{\mathbb{R}^d}f(\sigma z)\phi(z-x/\sigma)dz.
}
Note that $\mcl{N}_\sigma f$ is infinitely differentiable.  
\begin{lemma}\label{l2}
Let $K\geq0$ be a constant and set $\eta=\eta_d:=K/\sqrt{\log d}$. 
Then, for all $r\in\mathbb{N}$, there is a constant $C_{K,r}>0$ depending only on $K$ and $r$ such that 
\begin{equation*}
\sup_{h=1_A: A\in\mathcal{R}}\sup_{x\in\mathbb{R}^d}\sum_{j_1,\dots,j_r=1}^d\sup_{y\in R(0;\sigma\eta)}\left|\partial_{j_1,\dots, j_r}\mcl{N}_\sigma h(x+y)\right|
\leq C_{K,r} \sigma^{-r}(\log d)^{r/2}
\end{equation*}
for any $h=1_A$, $A\in\mathcal{R}$ and $\sigma>0$.
\end{lemma}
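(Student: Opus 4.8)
The plan is to reduce Lemma~\ref{l2} to Lemma~\ref{L2} by recognizing that $\mcl{N}_\sigma h$ is a convolution of $h=1_A$ with a scaled Gaussian, so its derivatives fall on the Gaussian kernel. Concretely, start from the second representation in \eqref{def:n}, namely $\mcl{N}_\sigma h(x)=\int_{\mathbb R^d} h(\sigma z)\phi(z-x/\sigma)\,dz$. Differentiating in $x$ pulls out factors of $\sigma^{-1}$ together with derivatives of $\phi$: for a multi-index we get $\partial_{j_1\cdots j_r}\mcl{N}_\sigma h(x+y)=\sigma^{-r}\int_{\mathbb R^d}h(\sigma z)\,(\partial_{j_1\cdots j_r}\phi)(z-(x+y)/\sigma)\,dz$, where the sign from the chain rule is harmless since we take absolute values. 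Substituting $w=\sigma z$ (Jacobian $\sigma^d$, absorbed into rewriting $\phi$ in the $d$-dimensional scaled form) and writing $h=1_A$, this becomes $\sigma^{-r}\big|\int_{A'}(\partial_{j_1\cdots j_r}\phi)(z'+y')\,dz'\big|$ for an appropriate rectangle $A'$ (a translated/scaled copy of $A$, still in $\mathcal R$) and a shift $y'$ with $\|y'\|_\infty\leq\eta$ when $\|y\|_\infty\leq\sigma\eta$.

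The key steps, in order: (i) write out the derivative of $\mcl{N}_\sigma h$ using the kernel representation and extract the $\sigma^{-r}$; (ii) perform the change of variables $z\mapsto$ (a shifted/scaled variable) to convert the integral against $\phi(\cdot - (x+y)/\sigma)$ into an integral of the form $\int_{A'}\partial_{j_1\cdots j_r}\phi(z+y')\,dz$ with $A'\in\mathcal R$ and $\|y'\|_\infty\leq\eta$; (iii) take the supremum over $x\in\mathbb R^d$ and $y\in R(0;\sigma\eta)$, observing that as $x$ ranges over $\mathbb R^d$ and $y$ over $R(0;\sigma\eta)$, the pair $(A',y')$ ranges within $\{A'\in\mathcal R\}\times R(0;\eta)$; (iv) sum over $j_1,\dots,j_r$ and invoke Lemma~\ref{L2} with the constant $K$, obtaining the bound $C_{K,r}(\log d)^{r/2}$, and reinstate the prefactor $\sigma^{-r}$.

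One point worth handling carefully is the interchange of the sum over $j_1,\dots,j_r$ and the sup over $A$ in Lemma~\ref{L2}: Lemma~\ref{L2} bounds $\sup_{A}\sum_{j_1,\dots,j_r}\sup_{y}|\cdots|$, which already dominates $\sum_{j_1,\dots,j_r}\sup_{A,y}|\cdots|$ only after we note that in our expression the rectangle $A'$ and shift $y'$ depend on $x,y$ but \emph{not} on the indices $j_1,\dots,j_r$; hence for fixed $x,y$ we have $\sum_{j_1,\dots,j_r}|\partial_{j_1\cdots j_r}\mcl N_\sigma h(x+y)| = \sigma^{-r}\sum_{j_1,\dots,j_r}\big|\int_{A'}\partial_{j_1\cdots j_r}\phi(z+y')\,dz\big| \leq \sigma^{-r}\sup_{A\in\mathcal R}\sum_{j_1,\dots,j_r}\sup_{\tilde y\in R(0;\eta)}\big|\int_{A}\partial_{j_1\cdots j_r}\phi(z+\tilde y)\,dz\big|$, and then Lemma~\ref{L2} applies. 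Taking $\sup$ over $x$ and $y\in R(0;\sigma\eta)$ on the left is then immediate since the right-hand side is already independent of $x,y$.

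The main obstacle is purely bookkeeping: getting the change of variables exactly right so that the shift $y'$ genuinely satisfies $\|y'\|_\infty\leq\eta$ (this is where the scaling $\sigma\eta$ in the hypothesis is used) and so that $A'$ is genuinely still an axis-aligned (possibly unbounded) rectangle, i.e.\ still a member of $\mathcal R$ — translation and positive scaling both preserve $\mathcal R$, so this is fine, but it must be spelled out. There is no analytic difficulty beyond Lemma~\ref{L2} itself; differentiation under the integral sign is justified because $\phi$ and all its derivatives are Schwartz and $h$ is bounded.
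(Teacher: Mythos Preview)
Your proposal is correct and follows essentially the same route as the paper's proof: differentiate the kernel representation \eqref{def:n} to pull out $\sigma^{-r}$ and land the derivatives on $\phi$, then change variables so that the integral becomes $\int_{A'}\partial_{j_1\cdots j_r}\phi(z+y')\,dz$ with $A'=\sigma^{-1}(A-x)\in\mathcal R$ and $y'=-y/\sigma\in R(0;\eta)$, and finally invoke Lemma~\ref{L2}. Your observation that $A'$ and $y'$ do not depend on the indices $j_1,\dots,j_r$ is exactly the reason the outer $\sup_{A\in\mathcal R}$ in Lemma~\ref{L2} suffices; the paper leaves this implicit, so your write-up is if anything more careful on that point. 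The only slight wobble is the parenthetical about ``Jacobian $\sigma^d$, absorbed into rewriting $\phi$'': the clean substitution here is the pure translation $z\mapsto z+x/\sigma$ (no scaling of $z$), which is what the paper does and what your endpoint formula requires.
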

\begin{proof}
Fix $h=1_A$, $A\in\mathcal{R}$, arbitrarily. For any $x,y\in\mathbb{R}^d$ and $j_1,\dots,j_r\in\{1,\dots,d\}$, we have
\ba{
\partial_{j_1,\dots, j_r}\mcl{N}_\sigma h(x+y)
&=(-1)^r\sigma^{-r}\int_{\mathbb{R}^d}h(\sigma z)\partial_{j_1,\dots, j_r}\phi(z-(x+y)/\sigma)dz\\
&=(-1)^r\sigma^{-r}\int_{\mathbb{R}^d}h(x+\sigma z)\partial_{j_1,\dots, j_r}\phi(z-y/\sigma)dz\\
&=(-1)^r\sigma^{-r}\int_{\sigma^{-1}(A-x)}\partial_{j_1,\dots, j_r}\phi(z-y/\sigma)dz,
} 
where $\sigma^{-1}(A-x):=\{\sigma^{-1}(z-x):z\in A\}\in\mcl{R}$. Hence we obtain
\ba{
\sum_{j_1,\dots,j_r=1}^d\sup_{y\in R(0;\sigma\eta)}\left|\partial_{j_1,\dots, j_r}\mcl{N}_\sigma h(x+y)\right|
\leq\sigma^{-r}\sup_{A\in\mathcal{R}}\sum_{j_1,\dots,j_r=1}^d\sup_{y\in R(0;\eta)}\left|\int_{A}\partial_{j_1,\dots, j_r}\phi(z+y)dz\right|.
}
Now, the desired result follows from Lemma \ref{L2}. 
\end{proof}

Next, we state a smoothing lemma. The test function $h=1_A$ we deal with in bounding 
$\sup_{h=1_A: A\in \mathcal{R}} |E h(W)- E h(Z)|$ is not continuous. It is a common strategy to smooth it first, then quantify the error introduced by such smoothing, finally balance the smoothing error with the smooth test function bound. We follow \cite{BhRa76} to smooth $h$ by convoluting it with a Gaussian distribution $K$ with a small variance.

\begin{lemma}[Modification of Lemma 11.4 of \cite{BhRa76}]\label{l3}
Let $\mu, \nu, K$ be probability measures on $\mathbb{R}^d$.
Let $\epsilon>0$ be a constant such that 
\be{
\alpha:=K(R(0;\epsilon))>1/2.
}
Let $h: \mathbb{R}^d\to \mathbb{R}$ be a bounded measurable function. 
Then we have
\be{
\left|\int h d(\mu-\nu)\right|\leq (2\alpha-1)^{-1}[\gamma^*(h;\epsilon)+\tau^*(h; 2\epsilon)],
}
where
\be{
\gamma^*(h;\epsilon)=\sup_{y\in \mathbb{R}^d}\gamma(h_y;\epsilon),\qquad
\tau^*(h;2\epsilon)=\sup_{y\in \mathbb{R}^d}\tau(h_y;2\epsilon),\qquad
h_y(x)=h(x+y),
}
\be{
\gamma(h;\epsilon)=\max\left\{\int M_h(\cdot; \epsilon) d(\mu-\nu)*K, -\int m_h(\cdot; \epsilon)  d(\mu-\nu)*K   \right\},
}
\be{
\tau(\cdot;2\epsilon)= \int [M_h(\cdot; 2\epsilon)-m_h(\cdot; 2\epsilon)] d\nu,
}
\be{
M_h(x;\epsilon)=\sup_{y: \norm{y-x}_\infty\leq \epsilon} h(y),\qquad m_h(x;\epsilon)=\inf_{y: \norm{y-x}_\infty\leq \epsilon} h(y),
}
and $*$ denotes the convolution of two probability measures.
\end{lemma}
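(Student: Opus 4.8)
The plan is to carry out the de-smoothing argument behind Lemma~11.4 of \cite{BhRa76}, together with the extra bookkeeping needed to produce the factor $(2\alpha-1)^{-1}$ and to make the translate-suprema $\gamma^*,\tau^*$ appear. Write $h_y(x):=h(x+y)$ and set
\[
D^*:=\sup_{y\in\mathbb{R}^d}\left|\int h_y\,d(\mu-\nu)\right|,
\]
which is finite because $h$ is bounded. Two observations drive the argument: $\gamma^*(h;\epsilon)$, $\tau^*(h;2\epsilon)$ and $D^*$ are unchanged when $h$ is replaced by a translate $h_y$ or by $-h$; and $\left|\int h\,d(\mu-\nu)\right|\leq D^*$, so it is enough to bound $D^*$. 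The elementary estimates I will use, valid for $\|z\|_\infty\leq\epsilon$ and all $x$, are: (i)~$m_h(x+z;\epsilon)\leq h(x)\leq M_h(x+z;\epsilon)$ (take the point $x$ in the defining inf/sup); (ii)~$M_h(x+z;\epsilon)-m_h(x+z;\epsilon)\leq M_h(x;2\epsilon)-m_h(x;2\epsilon)$ (the closed $\epsilon$-ball about $x+z$ lies inside the closed $2\epsilon$-ball about $x$); and, pointwise in $x$, (iii)~$h(x)\leq M_h(x;\epsilon)\leq h(x)+M_h(x;2\epsilon)-m_h(x;2\epsilon)$.

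Next I would split the smoothing kernel as $K=\alpha K_1+(1-\alpha)K_2$, where $K_1$ is the normalized restriction of $K$ to $R(0;\epsilon)$ (so $K_1$ is concentrated on $R(0;\epsilon)$) and $K_2$ the normalized restriction to the complement; if $\alpha=1$ every $K_2$-term below is simply absent. By~(i), $\int h\,d\mu\leq\int M_h(\cdot;\epsilon)\,d(\mu*K_1)$ and $\int h\,d\nu\geq\int m_h(\cdot;\epsilon)\,d(\nu*K_1)$. Using $\mu*K_1=\alpha^{-1}\bigl(\mu*K-(1-\alpha)\,\mu*K_2\bigr)$ and the analogous identity for $\nu$, subtracting these two bounds, and adding and subtracting $\int M_h(\cdot;\epsilon)\,d(\nu*K)$, the ``main'' piece is $\int M_h(\cdot;\epsilon)\,d\bigl((\mu-\nu)*K\bigr)\leq\gamma(h;\epsilon)$, while the ``oscillation'' piece $\int\bigl(M_h(\cdot;\epsilon)-m_h(\cdot;\epsilon)\bigr)\,d(\nu*K)$ is controlled via~(ii) on its $K_1$-part (producing $\alpha\,\tau(h;2\epsilon)$) and kept as a $K_2$-remainder on the rest. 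Once the $m_h$-terms cancel, this gives the key inequality
\[
\int h\,d(\mu-\nu)\leq\alpha^{-1}\gamma(h;\epsilon)+\tau(h;2\epsilon)-\frac{1-\alpha}{\alpha}\int M_h(\cdot;\epsilon)\,d\bigl((\mu-\nu)*K_2\bigr).
\]

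The heart of the matter is to bound this last term from below. Writing it as $\int\bigl[\int M_h(x+z;\epsilon)\,(\mu-\nu)(dx)\bigr]K_2(dz)$ and fixing $z$, I would use $M_h(x+z;\epsilon)\geq h(x+z)$ when integrating $x$ against $\mu$, and the pointwise bound~(iii) at the point $x+z$, namely $M_h(x+z;\epsilon)\leq h(x+z)+\bigl(M_h(x+z;2\epsilon)-m_h(x+z;2\epsilon)\bigr)$, when integrating against $\nu$. Since $M_h(x+z;2\epsilon)-m_h(x+z;2\epsilon)=M_{h_z}(x;2\epsilon)-m_{h_z}(x;2\epsilon)$, its $\nu$-integral equals $\tau(h_z;2\epsilon)\leq\tau^*(h;2\epsilon)$, whence $\int M_h(x+z;\epsilon)\,(\mu-\nu)(dx)\geq\int h_z\,d(\mu-\nu)-\tau^*(h;2\epsilon)\geq-D^*-\tau^*(h;2\epsilon)$ for every $z$; integrating against the probability measure $K_2$ yields $\int M_h(\cdot;\epsilon)\,d\bigl((\mu-\nu)*K_2\bigr)\geq-D^*-\tau^*(h;2\epsilon)$. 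Inserting this into the key inequality, bounding $\gamma(h;\epsilon)\leq\gamma^*(h;\epsilon)$ and $\tau(h;2\epsilon)\leq\tau^*(h;2\epsilon)$, and collecting the $\tau^*$-terms (note $\tau^*(h;2\epsilon)+\frac{1-\alpha}{\alpha}\tau^*(h;2\epsilon)=\alpha^{-1}\tau^*(h;2\epsilon)$), one reaches
\[
\int h\,d(\mu-\nu)\leq\alpha^{-1}\bigl[\gamma^*(h;\epsilon)+\tau^*(h;2\epsilon)\bigr]+\frac{1-\alpha}{\alpha}\,D^*.
\]

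This inequality was derived for an arbitrary bounded $h$, hence holds with $h$ replaced by any translate $h_y$ and by $-h$; by the translate- and sign-invariance noted at the start the right-hand side is the same in every case, so taking the supremum over translates and over the two signs puts $D^*$ on the left as well: $D^*\leq\alpha^{-1}[\gamma^*(h;\epsilon)+\tau^*(h;2\epsilon)]+\frac{1-\alpha}{\alpha}D^*$. Since $\alpha>1/2$, the coefficient $1-\frac{1-\alpha}{\alpha}=\frac{2\alpha-1}{\alpha}$ is positive, and rearranging gives $D^*\leq(2\alpha-1)^{-1}[\gamma^*(h;\epsilon)+\tau^*(h;2\epsilon)]$, which yields the lemma because $\left|\int h\,d(\mu-\nu)\right|\leq D^*$. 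The one step I expect to need care is this lower bound on the $K_2$-remainder: recognizing that the mass of $K$ lying outside $R(0;\epsilon)$ contributes a quantity controlled by $-D^*$ itself --- up to a $\tau^*$-correction --- is exactly what makes the estimate self-referential and produces the constant $(2\alpha-1)^{-1}$, and it is also why $\gamma^*$ and $\tau^*$ must be suprema over all translates. Everything else --- Fubini on the convolutions, the pointwise inequalities (i)--(iii), the cancellation of the $m_h$-terms, and the routine finiteness of $D^*,\gamma^*,\tau^*$ (each at most a constant times $\|h\|_\infty$, which legitimizes the final subtraction) --- is straightforward.
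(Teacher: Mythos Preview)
Your proof is correct and follows exactly the approach the paper points to: the paper omits the proof entirely, saying only that it ``can be shown in a completely parallel way to that of Lemma 11.4 in \cite{BhRa76} by changing the $\epsilon$-balls therein to $\epsilon$-rectangles.'' Your argument is precisely that parallel proof written out in full --- the split $K=\alpha K_1+(1-\alpha)K_2$, the self-referential bound on the $K_2$-remainder via $D^*$, and the final rearrangement using $\alpha>1/2$ are the Bhattacharya--Rao steps, with the sup-norm balls in place of Euclidean ones.
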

Lemma \ref{l3} can be shown in a completely parallel way to that of Lemma 11.4 in \cite{BhRa76} by changing the $\epsilon$-balls therein to $\epsilon$-rectangles, so we omit its proof.

\subsection{Basic estimates} 


In Theorems~\ref{t1}--\ref{ft5}, we aim to bound
\be{
\delta:=\sup_{h=1_A: A\in \mathcal{R}} |E h(W)- E h(Z)|,\quad Z\sim N(0,\Sigma).
}
In this subsection, we collect some basic estimates used in all of their proofs.
Fix $A\in \mathcal{R}$. Let
\be{
h=1_A,\quad \tilde{h}=1_A-P(Z\in A).
}
For $s>0$, let
\be{
T_s\tilde h(x)=\E\tilde h (e^{-s} x+\sqrt{1-e^{-2s}}Z).
}
Note that $E T_s \tilde h(Z)=E \tilde h(Z)=0.$
Let $Q$ and $G$ be the laws of $W$ and $Z$, respectively. 
For a probability distribution $\mu$ on $\mathbb{R}^d$ and $\sigma\geq0$, we denote by $\mu_\sigma$ the law of the random vector $\sigma Y$ with $Y\sim\mu$.
For $t>0$ to be chosen, we are going to apply Lemma~\ref{l3} with 
\be{
\mu=Q_{e^{-t}},\ \nu=G_{e^{-t}}, \  K=G_{\sqrt{1-e^{-2t}}}, \   h=1_A,
}
and $\epsilon$ be such that
\be{
G_{\sqrt{1-e^{-2t}}}\{\norm{z}_\infty\leq \epsilon\}=7/8.
}
We first bound $\tau^*(h; 2\epsilon)$ in Lemma~\ref{l3}.
Recall the definition of $\sigma$'s from \eq{eq:sigma}.
Markov's inequality and Lemma 2.1 of \cite{Ch08b} yield
\be{
\epsilon\leq C\sqrt{1-e^{-2t}}\E\|Z\|_\infty \leq C\ol{\sigma}\sqrt{1-e^{-2t}}\sqrt{\log d}.
}
Thus, applying the Gaussian anti-concentration inequality in Lemma \ref{l1} with $Y=(e^{-t}Z^\top,-e^{-t}Z^\top)^\top$, we obtain
\ben{\label{2}
\tau^*(h; 2\epsilon)\leq Ce^t \frac{\log d}{\ul \sigma}\epsilon \leq \frac{C\ol \sigma}{\ul \sigma} e^t \sqrt{t} \log d,
}
where we used the elementary inequality $1-e^{-x}\leq x$ for all $x\geq0$.

Now we turn to bounding $\gamma^*(h;\epsilon)$ in Lemma~\ref{l3}. Note that $M_h(\cdot;\epsilon)$ and $m_h(\cdot; \epsilon)$ are again indicator functions of rectangles. 
Note also that the class $\mathcal{R}$ is invariant under translation and scalar multiplication. 
Therefore, it suffices to obtain a uniform upper bound for the absolute value of
\ben{\label{1}
\int h d(\mu-\nu)*K=\int \tilde h d\mu*K=ET_t \tilde h (W)
}
over all $h=1_A, A\in \mathcal{R}$. 
In fact, we have by Lemma~\ref{l3} and \eqref{2}--\eqref{1}
\ben{\label{3}
\delta\leq C\left(
\sup_{h=1_A: A\in \mathcal{R}}|ET_t \tilde h (W)|
+\frac{\ol \sigma}{\ul \sigma}e^t \sqrt{t} \log d
\right).
}

We use (various versions of) Stein's method to bound $ET_t \tilde h (W)$. 
Similarly to (1.14) and (3.1) of \cite{BhHo10}, one can verify that 
\be{
\psi_t(x)=-\int_t^\infty T_s\tilde h(x)ds
}
is a solution to the Stein equation
\be{
\langle \Sigma,\text{Hess}\psi_t(w)\rangle_{H.S.}-w \cdot \nabla \psi_t(w)=T_t \tilde h(w).
}
Thus we have
\begin{equation}\label{eq:stein}
\E T_t\tilde h(W)=\E[\langle \Sigma, \text{Hess}\psi_t(W)\rangle_{H.S.}-W\cdot \nabla \psi_t(W)].
\end{equation}
Set $\tilde\Sigma:=\Sigma-\sigma_*^2I_d$. Note that $\tilde\Sigma$ is positive semidefinite because $\sigma_*^2$ is the smallest eigenvalue of $\Sigma$. Let us take independent random vectors $\tilde Z$ and $Z'$ such that $\tilde Z\sim N(0,\tilde\Sigma)$, $Z'\sim N(0,I_d)$ and they are independent of everything else. 
Then, since $\tilde Z+\sigma_* Z'\sim N(0,\Sigma)$, we can rewrite $T_s\tilde h(x)$ as
\besn{\label{eq:stein-sol}
T_s\tilde h(x)&=\E\tilde h(e^{-s}x+\sqrt{1-e^{-2s}}\tilde Z+\sigma_*\sqrt{1-e^{-2s}}Z')\\
&=\E\mcl{N}_{\sigma_*\sqrt{1-e^{-2s}}}\tilde h(e^{-s}x+\sqrt{1-e^{-2s}}\tilde Z),
}
where $\mcl{N}_{\sigma_*\sqrt{1-e^{-2s}}}\tilde h$ is defined by \eqref{def:n}. 
Therefore, applying Lemma \ref{l2} with $\eta=0$ and noting (3.14)--(3.15) of \cite{BhHo10}, we obtain
\begin{align}
\sum_{j=1}^d\left|\partial_{j}\psi_t(x)\right|
&\leq C\sigma_*^{-1}\sqrt{\log d},\label{aht1}\\
\sum_{j,k=1}^d\left|\partial_{jk}\psi_t(x)\right|
&\leq C\sigma_*^{-2}(|\log t| \vee 1)(\log d),\label{aht2}\\
\sum_{j,k,l=1}^d\left|\partial_{jkl}\psi_t(x)\right|
&\leq C\sigma_*^{-3}\frac{1}{\sqrt t}(\log d)^{3/2}.\label{aht3}
\end{align}

\subsection{Proof of Theorem~\ref{t1}}


Without loss of generality, we may assume $\frac{\Delta_W}{\sigma_*^2}<1$; otherwise, the bound \eq{f12} is trivial. 
Since $W$ has a Stein kernel $\tau^W$, we obtain by \eqref{eq:stein}
\[
E T_t\tilde h(W)=E\left[\sum_{j,k=1}^d\partial_{jk}\psi_t(W)(\Sigma_{jk}-\tau^W_{jk}(W))\right].
\]
Therefore, we deduce by \eqref{aht2}  
\begin{align*}
\left|E T_t\tilde h(W)\right|
\leq \frac{C}{\sigma_*^2} (\log d)\Delta_W (|\log t|\vee1).
\end{align*}
Consequently, we have by \eq{3} 
\be{
\delta\leq C \left\{ \frac{\ol \sigma}{\ul \sigma} e^t\sqrt{t} \log d + \frac{1}{\sigma_*^2}(\log d)\Delta_W(|\log t|\vee1)\right\}.
}
Setting $\sqrt{t}=\frac{\ul \sigma \Delta_W}{\ol \sigma \sigma_*^2}$ and noting $\frac{\Delta_W}{\sigma_*^2}< 1$, we obtain the desired result.

\subsection{Proof of Corollary~\ref{c0}}

Without loss of generality, we may assume
\begin{equation}\label{wlog0}
\frac{1}{\sigma_*^2}\sqrt{\frac{\log^3d}{n}}\leq1.
\end{equation}
As in the proof of \cite[Theorem 3.3]{Fathi19}, we first prove the result when $\mu$ is compactly supported and its density is bounded away from zero on its support. Then, by Theorem 2.3 and Proposition 3.2 in \cite{Fathi19}, $\mu$ has a Stein kernel $\tau=(\tau_{jk})_{1\leq j,k\leq d}$ such that $\tau(x)$ is positive definite for all $x\in\mathbb{R}^d$ and $\max_{1\leq j\leq d}E[|\tau_{jj}(X_1)|^p]\leq 8^pp^{2p}$ for all $p\geq1$ (here we used the assumption that $\Var(W_j)=1, \forall\ 1\leq j\leq d$). 
Note that we indeed have $\max_{1\leq j,k\leq d}E[|\tau_{jk}(X_1)|^p]\leq 8^pp^{2p}$ for all $p\geq1$ due to positive definiteness. 
In particular, Lemma A.7 in \cite{Ko19a} (with $q=4$) yields 
\ben{\label{r03}
\max_{1\leq j,k\leq d}E\exp\left(\sqrt{\frac{|\tau_{jk}(X_1)|}{C}}\right)\leq2.
}

Now we define the function $\tau^W:\mathbb{R}^d\to\mathbb{R}^{d\times d}$ by 
\[
\tau^W(x)=E\left[\frac{1}{n}\sum_{i=1}^n\tau(X_i)\mid W=x\right],\qquad x\in\mathbb{R}^d. 
\]
It is well known that $\tau^W$ is a Stein kernel for $W$ (cf.~page 271 of \cite{LeNoPe15}). 
Jensen's inequality yields
\ben{\label{r05}
E\left[\max_{1\leq j,k\leq d}\left|\Sigma_{jk}-\tau^W_{jk}(W)\right|\right]
\leq E\left[\max_{1\leq j,k\leq d}\left|\frac{1}{n}\sum_{i=1}^n(\tau_{jk}(X_i)-\Sigma_{jk})\right|\right].
}
We will use Theorem 3.1 and Remark A.1 in \cite{KuCh18} to bound the right-hand side of \eq{r05}. We need the following definitions.
\begin{definition}[Orlicz Norms]\label{d1}
Let $g: [0,\infty)\to [0, \infty)$ be a non-decreasing function with $g(0)=0$. The ``$g$-Orlicz norm'' of a random variable $X$ is given by
\be{
\norm{X}_g:=\inf\{\eta>0: E[g(|X|/\eta)]\leq 1\}.
}
\end{definition}
\begin{definition}[Sub-Weibull Variables]
A random variable $X$ is said to be sub-Weibull of order $\alpha>0$, denotes as sub-Weibull$(\alpha)$, if
\be{
\norm{X}_{\psi_\alpha}<\infty, \ \text{where}\ \psi_\alpha(x)=\exp(x^\alpha)-1\ \text{for}\ x\geq 0.
}
\end{definition}
\begin{definition}[Generalized Bernstein-Orlicz Norm]
Fix $\alpha>0$ and $L\geq 0$. Define the function $\Psi_{\alpha, L}(\cdot)$ based on the inverse function 
\be{
\Psi_{\alpha, L}^{-1}(t):=\sqrt{\log(1+t)}+L (\log(1+t))^{1/\alpha}\ \text{for all}\ t\geq 0.
}
The Generalized Bernstein-Orlicz (GBO) norm of a random variable $X$ is then given by $\norm{X}_{\Psi_{\alpha, L}}$ as in Definition \ref{d1}.
\end{definition}
Applying Theorem 3.1 of \cite{KuCh18} to the sequence of independent mean zero sub-Weibull$(\frac{1}{2})$ random variable (cf. \eq{r03}) $\{\tau_{jk}(X_i)-\Sigma_{jk}: i=1,\dots, n\}$, we have
\be{
\norm{\frac{1}{n}\sum_{i=1}^n (\tau_{jk}(X_i)-\Sigma_{jk}) }_{\Psi_{\frac{1}{2}, L_n}}\leq \frac{C}{\sqrt{n}},
}
for some $L_n=C/\sqrt{n}$.
Combining with Remark A.1 of \cite{KuCh18}, we have,
\besn{\label{r04}
&E\left[\max_{1\leq j,k\leq d}\left|\frac{1}{n}\sum_{i=1}^n(\tau_{jk}(X_i)-\Sigma_{jk})\right|\right]\\
\leq & C \max_{1\leq j,k\leq d} \norm{\frac{1}{n}\sum_{i=1}^n (\tau_{jk}(X_i)-\Sigma_{jk}) }_{\Psi_{\frac{1}{2}, L_n}} \left(\sqrt{\log d}+L_n \log^2 d\right)\\
\leq & C\frac{1}{\sqrt n}\left(\sqrt{\log d}+\frac{1}{\sqrt n}\log^2d\right).
}
From \eq{r05} and \eq{r04}, we have
\ba{
E\left[\max_{1\leq j,k\leq d}\left|\Sigma_{jk}-\tau^W_{jk}(W)\right|\right]
&\leq C\frac{1}{\sqrt n}\left(\sqrt{\log d}+\frac{1}{\sqrt n}\log^2d\right)\\
&=C\sqrt{\frac{\log d}{n}}\left(1+\frac{\log^{3/2}d}{\sqrt n}\right)
\leq C\sqrt{\frac{\log d}{n}},
}
where the last inequality follows from \eqref{wlog0}.
Therefore, an application of Theorem \ref{t1}, together with the fact that $x(|\log x|\vee 1)$ is an increasing function for $x\geq 0$ and the assumption \eq{wlog0}, yields the desired result. 

Next we prove the result when $\mu$ is supported on the whole space $\mathbb R^d$. We take a sequence of convex bodies $F_\ell$ that converge to $\mathbb{R}^d$. 
Define the probability measure $\nu_\ell$ on $\mathbb{R}^d$ by $\nu_\ell(A)=\mu(A\cap F_\ell)/\mu(F_\ell)$ for any Borel set $A\subset\mathbb R^d$ (note that $\mu(F_\ell)\to1$ as $\ell\to\infty$ by construction, so $\mu(F_\ell)>0$ for sufficiently large $\ell$). 
Then, let $\mu_\ell$ be the law of the variable $M_\ell^{-1/2}(Y_\ell-EY_\ell)$, where $Y_\ell$ is a random vector with law $\nu_\ell$ and $M_\ell$ is the diagonal matrix with the diagonal entries equal to those of $\Cov(\mu_\ell)$ (note that $\Cov(Y_\ell)\to \Sigma$ as $\ell\to\infty$ by construction, so $M_\ell^{-1/2}$ exists for sufficiently large $\ell$). 
Note that $M_\ell \to I_d$. 
Also, the density of $\mu_\ell$ is bounded away from zero on its support because $\mu$ is supported on $\mathbb R^d$ and has a continuous density. 
Hence, letting $W_{\ell}=n^{-1/2}\sum_{i=1}^n X^{(\ell)}_i\in \mathbb{R}^d$ with $\{X^{(\ell)}_1,\dots, X^{(\ell)}_n\}$ being i.i.d.~with law $\mu_\ell$ and using the result for the compactly supported case above, we have, for sufficiently large $\ell$,
\[
\sup_{h=1_A: A\in \mathcal{R}} |E h(W_\ell)- E h(Z)|
\leq \frac{C}{\sigma_*^2}\sqrt{\frac{\log^3 d}{n}}\log n.
\]
Moreover, it is also easy to verify that the density of $W_\ell$ converges almost everywhere to that of $W$ as $\ell\to\infty$. Thus, Scheff\'e's lemma yields
\[
\sup_{h=1_A: A\in \mathcal{R}} |E h(W_\ell)- E h(W)|\to0\qquad(\ell\to\infty).
\]
This yields the desired result.

Finally, to prove the result in the general case, take $\epsilon>0$ arbitrarily and let $\mu_\epsilon$ be the law of the variable $\sqrt{1-\epsilon^2}X_1+\epsilon\zeta$, where $\zeta\sim N(0,\Sigma)$ and is independent of $\{X_1,\dots,X_n\}$. It is evident that $\mu_\epsilon$ has covariance matrix $\Sigma$ and supported on the whole space $\mathbb R^d$. Moreover, $\mu_\epsilon$ has a log-concave density by Proposition 3.5 in \cite{SaWe14}. Hence we have for any $A\in\mathcal{R}$
\[
|E1_A(W)-E1_A(Z)|
\leq|E1_A(W)-E1_A(W^\epsilon)|
+\frac{C}{\sigma_*^2}\sqrt{\frac{\log^3 d}{n}}\log n,
\]
where $W^\epsilon:=\sqrt{1-\epsilon^2}W+\epsilon \zeta$. Since $W$ has a density and $W^\epsilon$ converges in law to $W$ as $\epsilon\to0$, $|E1_A(W)-E1_A(W^\epsilon)|\to0$ as $\epsilon\to0$. Thus, letting $\epsilon\to0$ and taking the supremum over $A\in\mathcal{R}$ in the above inequality, we complete the proof. 

%


%
%

\subsection{Proof of Corollary~\ref{c-wiener}}

By Proposition 3.7 in \cite{NoPeSw14}, $W$ has a Stein kernel $\tau^W=(\tau^W_{jk})_{1\leq j,k\leq d}$ given by
\[
\tau^W_{jk}(x)=E[\langle-DL^{-1}W_j,DW_k\rangle_{\mathfrak{H}}\mid W=x],\qquad x\in\mathbb{R}^d,
\]
where $\langle\cdot,\cdot\rangle_{\mathfrak{H}}$ denotes the inner product of $\mathfrak{H}$, while $D$ and $L^{-1}$ denote the Malliavin derivative and pseudo inverse of the Ornstein-Uhlenbeck operator with respect to $X$, respectively. 
By Jensen's inequality and Lemma 2.2 in \cite{Ko19a}, we have
\ba{
E\left[\max_{1\leq j,k\leq d}\left|\Sigma_{jk}-\tau^W_{jk}(W)\right|\right]
&\leq E\left[\max_{1\leq j,k\leq d}\left|\Sigma_{jk}-\langle-DL^{-1}W_j,DW_k\rangle_{\mathfrak{H}}\right|\right]
\leq C_q(\log^{q-1} d)\ol\Delta_W,
}
where $C_q>0$ depends only on $q$. Thus the desired result follows from Theorem \ref{t1}.

\subsection{Proof of Theorem~\ref{t2}}
Without loss of generality, we may assume $t<1$; otherwise, the theorem is trivial because $\sup_{h=1_A: A\in \mathcal{R}} |E h(W)- E h(Z)|\leq 1$.
By exchangeability we have
\begin{align*}
0&=\frac{1}{2}E[\Lambda^{-1}D\cdot(\nabla \psi_t(W')+\nabla \psi_t(W))]\\
&=E\left[\frac{1}{2}\Lambda^{-1}D\cdot(\nabla \psi_t(W')-\nabla \psi_t(W))+\Lambda^{-1}D\cdot\nabla \psi_t(W)\right]\\
&=E\left[\frac{1}{2}\sum_{j,k=1}^d(\Lambda^{-1}D)_jD_k\partial_{jk}\psi_t(W)+\Xi+\Lambda^{-1}D\cdot\nabla \psi_t(W)\right],
\end{align*}
where
\[
\Xi=\frac{1}{2}\sum_{j,k,l=1}^d(\Lambda^{-1}D)_jD_kD_lU\partial_{jkl}\psi_t(W+(1-U)D)
\]
and $U$ is a uniform random variable on $[0,1]$ independent of everything else. 
Combining this with \eqref{eq:lr}, \eqref{eq:stein} and \eqref{aht1}--\eqref{aht2}, we obtain
\begin{align*}
ET_t\tilde{h}(W)
\leq \frac{1}{\sigma_*}E\left(\max_{1\leq j\leq d} |R_j|\right)\sqrt{\log d}+\frac{1}{\sigma_*^2}\Delta_1(|\log t|\vee1)\log d
+|E[\Xi]|.
\end{align*}
To estimate $|E[\Xi]|$, we rewrite it as follows. By exchangeability we have
\begin{align*}
&E[(\Lambda^{-1}D)_jD_kD_lU\partial_{jkl}\psi_t(W+(1-U)D)]\\
&=-E[(\Lambda^{-1}D)_jD_kD_lU\partial_{jkl}\psi_t(W'-(1-U)D)]\\
&=-E[(\Lambda^{-1}D)_jD_kD_lU\partial_{jkl}\psi_t(W+UD)].
\end{align*}
Hence we obtain
\begin{align}
E[\Xi]&=\frac{1}{4}\sum_{j,k,l=1}^dE[(\Lambda^{-1}D)_jD_kD_lU\{\partial_{jkl}\psi_t(W+(1-U)D)-\partial_{jkl}\psi_t(W+UD)\}]\nonumber\\
&=\frac{1}{4}\sum_{j,k,l,m=1}^dE[(\Lambda^{-1}D)_jD_kD_lD_mU(1-2U)\partial_{jklm}\psi_t(W+\tilde D)],\label{eq:xi}
\end{align}
where $\tilde D:=UD+V(1-2U)D$ and $V$ is a uniform random variable on $[0,1]$ independent of everything else. 
Note that $|U+V(1-2U)|\leq U\vee(1-U)\leq1$ and thus $\|\tilde D\|_\infty\leq \|D\|_{\infty}$. 

Now, note that $\frac{e^{-s}\eta}{\sigma_*\sqrt{1-e^{-2s}}}\leq1/\sqrt{2\log d}$ for every $s\geq t$ by assumption.
Hence, \eqref{eq:stein-sol} and Lemma \ref{l2} imply 
\[
\sum_{j,k,l,m=1}^d\sup_{y\in R(0;\eta)}\left|\partial_{jklm}\psi_t(x+y)\right|
\leq C\sigma_*^{-4}\frac{1}{t}\log^2 d.
\]
Combining this with \eqref{eq:xi} and $\|\tilde D\|_\infty\leq\|D\|_\infty$, we obtain
\begin{align*}
|\E\Xi|&\leq \frac{1}{4}\sum_{j,k,l,m=1}^d\E\left[|(\Lambda^{-1}D)_jD_kD_lD_m|\sup_{y\in R(0;\eta)}|\partial_{jklm}\psi_t(W+y) |;\|\tilde D\|_{\infty}\leq\eta\right]\\
&\quad+\frac{1}{4}\sum_{j,k,l,m=1}^d\E[|(\Lambda^{-1}D)_jD_kD_lD_m\partial_{jklm}\psi_t(W+\tilde D)|;\|D\|_{\infty}>\eta]\\
&\leq C\sigma_*^{-4}\frac{\log^2d}{t}(\Delta_2+\Delta_3(\eta)).
\end{align*}
Now the desired result follows from \eqref{3}.

\subsection{Proof of Theorem~\ref{ft4}}
Without loss of generality, we may assume $t<1$; otherwise, the theorem is trivial because $\sup_{h=1_A: A\in \mathcal{R}} |E h(W)- E h(Z)|\leq 1$.
We follow the proof of Theorem~\ref{t2} and bound $E(\Delta \psi_t(W)- W\cdot \nabla \psi_t(W))$.
From the independence of $X'$ and $X$ and the assumption that $E(W)=0$ and using the telescoping sum, we have
\bes{
E W \cdot \nabla \psi_t(W)=&E(W-W^{\{1:n\}})\cdot \nabla \psi_t(W)\\
=& \sum_{i=1}^n E(W^{\{1:(i-1)\}}-W^{\{1:i\}} )\cdot \nabla \psi_t(W).
}
Exchanging $X_i$ with $X_i'$, we have
\be{
E(W^{\{1:(i-1)\}}-W^{\{1:i\}} )\cdot \nabla \psi_t(W) =E(W^{\{1:i\}}-W^{\{1:(i-1)\}} )\cdot \nabla \psi_t(W^{\{i\}}).
}
Therefore,
\bes{
&E W \cdot \nabla \psi_t(W)\\
=&
\frac{1}{2}\sum_{i=1}^n E (W^{\{1:i\}}-W^{\{1:(i-1)\}} )\cdot (\nabla \psi_t(W^{\{i\}})-\nabla \psi_t(W))\\
=& \frac{1}{2}\sum_{i=1}^n \sum_{j,k=1}^d E (W^{\{1:i\}}-W^{\{1:(i-1)\}} )_j (W^{\{i\}}-W)_k \partial_{jk} \psi_t(W)\\
&+\frac{1}{2}\sum_{i=1}^n \sum_{j,k,l=1}^d E (W^{\{1:i\}}-W^{\{1:(i-1)\}} )_j (W^{\{i\}}-W)_k U (W^{\{i\}}-W)_l \partial_{jkl} \psi_t(W+UV( W^{\{i\}}-W)),
}
where $U, V$ are independent uniform random variables on $[0,1]$ and independent of everything else. 
Exchanging $X_i$ with $X_i'$ gives
\bes{
&E (W^{\{1:i\}}-W^{\{1:(i-1)\}} )_j (W^{\{i\}}-W)_k U (W^{\{i\}}-W)_l \partial_{jkl} \psi_t(W+UV( W^{\{i\}}-W))\\
=&-E (W^{\{1:i\}}-W^{\{1:(i-1)\}} )_j (W^{\{i\}}-W)_k U (W^{\{i\}}-W)_l \partial_{jkl} \psi_t(W-UV( W^{\{i\}}-W)).
}
Following similar arguments as in the proof of Theorem~\ref{t2}, we obtain the desired result.

\subsection{Proof of Corollary \ref{c2}}

Without loss of generality, we may assume
\begin{equation}\label{wlog}
(4\sqrt5)^6\frac{B_n^2\log^4(dn)}{\sigma_*^4 n}\leq 1.
\end{equation}
We prove the assertion in three steps. 
In Steps 1 and 2, we truncate the random variables and show that the error introduced by the truncation is negligible. In Step 3, we apply Theorem \ref{ft4} to the truncated variable.
\medskip

\noindent\textbf{Step 1.} 
Set $\kappa_n:=B_n\sqrt{5\log (dn)}$. For $i=1,\dots,n$ and $j=1,\dots,d$, define 
\[
\wt{X}_{ij}:=X_{ij}1_{\{|X_{ij}|\leq\kappa_n\}}-EX_{ij}1_{\{|X_{ij}|\leq\kappa_n\}},
\] 
and set $\wt{X}:=(\wt{X}_i)_{i=1}^n$ with $\wt{X}_i=(\wt{X}_{i1},\dots,\wt{X}_{id})^\top$. 
Note that $\max_{i,j}|\wt{X}_{ij}|\leq2\kappa_n$. Also, since $P(X_{ij}^2>x)\leq2e^{-x/B_n^2}$ for all $x\geq0$, Lemma 5.4 in \cite{Koike19} yields
\begin{equation}\label{trunc-l2}
EX_{ij}^21_{\{X_{ij}^2>\kappa_n^2\}}
\leq Ce^{-\kappa_n^2/B_n^2}\kappa_n^2
\leq C\frac{B_n^2\log(dn)}{n^5}.
\end{equation}

\noindent\textbf{Step 2.} 
Let $\wt{W}:=n^{-1/2}\sum_{i=1}^n\wt{X}_i$. On the event $\max_{1\leq i\leq n}\|X_i\|_\infty\leq\kappa_n$, we have
\ba{
|W_j-\wt{W}_j|
=\left|\frac{1}{\sqrt{n}}\sum_{i=1}^nEX_{ij}1_{\{|X_{ij}|>\kappa_n\}}\right|
\leq\frac{1}{\sqrt{n}}\sum_{i=1}^n\sqrt{EX_{ij}^21_{\{X_{ij}^2>\kappa_n^2\}}}
\leq C\frac{B_n\sqrt{\log(dn)}}{n^2}.
}
Therefore, Lemma 6.1 in \cite{CCKK19} yields
\begin{equation*}
P\left(\|W-\wt{W}\|_\infty>C\frac{B_n\sqrt{\log(dn)}}{n^2}\right)
\leq P\left(\max_{1\leq i\leq n}\|X_i\|_\infty>\kappa_n\right)\leq\frac{1}{2n^4}.
\end{equation*}
From this estimate and the Gaussian anti-concentration inequality, we obtain
\[
\sup_{h=1_A: A\in \mathcal{R}} |E h(W)- E h(Z)|
\leq C\left(\frac{1}{2n^4}+\frac{B_n \log d \sqrt{\log(dn)}}{n^2}+\wt\delta\right),
\]
where
\[
\wt\delta:=\sup_{h=1_A: A\in \mathcal{R}} |E h(\wt W)- E h(Z)|.
\]
Therefore, the proof is completed once we show
\[
\wt\delta\leq C\left(\frac{B_n^2\log^4 (dn)}{\sigma_*^4 n}\right)^{1/3}.
\]

\noindent\textbf{Step 3.} 
We apply Theorem \ref{ft4} to $\wt{X}$ and $\wt{W}$ with $\eta:=4\kappa_n/\sqrt{n}$. By construction we have $\delta_3(\eta)=0$. 
Meanwhile, we have
\ba{
&E\left[\max_{1\leq j,k\leq d}\left|\frac{1}{2n}\sum_{i=1}^n\left((\wt X'_{ij}-\wt X_{ij})(\wt X'_{ik}-\wt X_{ik})- E[(\wt X'_{ij}-\wt X_{ij})(\wt X'_{ik}-\wt X_{ik})]\right)\right|\right]\\
&\leq C\frac{\sqrt{\log d}}{n}E\left[\sqrt{\max_{1\leq j,k\leq d}\sum_{i=1}^n (X'_{ij}-X_{ij})^2(X'_{ik}-X_{ik})^2}\right]
\leq C\sqrt{\delta_2\log d},
}
where the first inequality follows from Nemirovski's inequality:
\begin{lemma}[Lemma 14.24 in \cite{BuVdG11}]
Let $Y_i$ be independent random variables taking values in a measurable space $\mathcal{Y}$ and let $\gamma_1, \dots, \gamma_p$ be real-valued measurable functions on $\mathcal{Y}$ such that $E\gamma_j (Y_i)$ exists.
For $m\geq 1$ and $p\geq e^{m-1}$, we have
\be{
E \max_{1\leq j\leq p} \left| \sum_{i=1}^n \left(\gamma_j(Y_i) -E \gamma_j (Y_i) \right) \right|^m\leq
\left[ 8 \log(2p) \right]^{m/2} E\left[\max_{1\leq j\leq p} \sum_{i=1}^n \gamma_j^2 (Y_i) \right]^{m/2}.
}
\end{lemma}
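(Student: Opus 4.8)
The statement is the classical Nemirovski moment inequality for the $\ell^\infty$-norm of a sum of independent random vectors, and the plan is to prove it by symmetrization followed by a coordinatewise sub-Gaussian estimate for the maximum. Write $X_{ij}=\gamma_j(Y_i)$, $X_i=(X_{i1},\dots,X_{ip})^\top\in\mathbb{R}^p$, and $S_n=\sum_{i=1}^n(X_i-EX_i)$, so the quantity to bound is $E\|S_n\|_\infty^m$ (we may assume the right-hand side is finite, else there is nothing to prove). The conceptual point to keep in mind is that the bound must involve $\max_{1\le j\le p}\sum_{i=1}^n\gamma_j^2(Y_i)$, not $\sum_{j=1}^p\sum_{i=1}^nE\gamma_j^2(Y_i)$: an elementary $\ell^q$-smoothness argument (using that $\tfrac12\|\cdot\|_q^2$ is $(q-1)$-smooth with $q\asymp\log p$, plus a martingale/telescoping step) naturally produces a \emph{sum} over coordinates of the increment energies, which can be order-$p$ larger than $\max_j\sum_i\gamma_j^2(Y_i)$; so one really must route through a max-of-sub-Gaussians bound, and symmetrization is what makes that possible.

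First I would symmetrize. Let $(X_i')_i$ be an independent copy of $(X_i)_i$ and $(\epsilon_i)_i$ independent Rademacher signs, everything mutually independent. Since $EX_i=E[X_i'\mid (X_\ell)_\ell]$, conditional Jensen (for $\|\cdot\|_\infty$ and then for $t\mapsto t^m$) gives $E\|S_n\|_\infty^m\le E\|\sum_i(X_i-X_i')\|_\infty^m$; each $X_i-X_i'$ is symmetric, so this equals $E\|\sum_i\epsilon_i(X_i-X_i')\|_\infty^m$, and the triangle inequality together with $(a+b)^m\le 2^{m-1}(a^m+b^m)$ yields $E\|S_n\|_\infty^m\le 2^m\,E\|\sum_i\epsilon_i X_i\|_\infty^m$. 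The gain is that the surviving object now involves the \emph{uncentered} values $X_{ij}=\gamma_j(Y_i)$, which is exactly what appears on the right-hand side of the lemma.

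Next I would condition on $(Y_i)_{i=1}^n$ and use only the randomness of $\epsilon$. For fixed $j$, $\sum_i\epsilon_i X_{ij}$ is a weighted Rademacher sum, so $E_\epsilon\exp(\lambda\sum_i\epsilon_i X_{ij})=\prod_i\cosh(\lambda X_{ij})\le\exp(\tfrac{\lambda^2}{2}\sum_i X_{ij}^2)\le\exp(\tfrac{\lambda^2}{2}V_n)$ where $V_n:=\max_{1\le j\le p}\sum_{i=1}^n\gamma_j^2(Y_i)$. A Chernoff bound and a union bound over the $2p$ one-sided events give $P_\epsilon(\|\sum_i\epsilon_i X_i\|_\infty>t)\le 2p\exp(-t^2/(2V_n))$ for every $t>0$. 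Feeding this into $E_\epsilon\|\sum_i\epsilon_i X_i\|_\infty^m=\int_0^\infty mt^{m-1}P_\epsilon(\|\sum_i\epsilon_i X_i\|_\infty>t)\,dt$, splitting the integral at $t_0=\sqrt{2V_n\log(2p)}$ (the value at which the exponential bound equals $1$), and estimating the remaining upper incomplete Gamma integral $\int_{\log(2p)}^\infty u^{m/2-1}e^{-u}\,du$, one obtains $E_\epsilon\|\sum_i\epsilon_i X_i\|_\infty^m\le c\,(2\log(2p))^{m/2}V_n^{m/2}$. This is exactly where the hypothesis $p\ge e^{m-1}$ enters: it forces $\log(2p)$ to be of order at least $m$, which keeps the tail part of the integral no larger than the main part. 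Taking expectations over $(Y_i)$ and restoring the symmetrization factor $2^m$ then gives the asserted bound $[8\log(2p)]^{m/2}\,E[\max_{1\le j\le p}\sum_{i=1}^n\gamma_j^2(Y_i)]^{m/2}$.

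The only genuine obstacle is numerical: converting the crude split-and-integrate estimate into the sharp constant $8$ (rather than an absolute multiple of it) forces one to balance the two pieces of the tail integral with care and to use $p\ge e^{m-1}$ quantitatively rather than qualitatively. Everything else is routine; in particular the case $m=1$ can, if desired, be read off directly from the sub-Gaussian maximal bound $E_\epsilon\max_j|\sum_i\epsilon_i X_{ij}|\le\sqrt{2V_n\log(2p)}$ before any integration.
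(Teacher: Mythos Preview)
The paper does not prove this lemma at all: it is quoted verbatim as Lemma~14.24 of B\"uhlmann and van~de~Geer and used as a black box in the proof of Corollary~\ref{c2}, so there is no in-paper argument to compare your attempt against.

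As for the attempt itself, your route---symmetrize to replace $X_i-EX_i$ by $\epsilon_iX_i$, then condition on $(Y_i)$ and use the Hoeffding sub-Gaussian bound for weighted Rademacher sums together with a union bound over $2p$ one-sided events, and finally integrate the tail---is a standard and correct proof of Nemirovski's inequality, yielding a bound of the form $C\,(8\log(2p))^{m/2}E\bigl[\max_j\sum_i\gamma_j^2(Y_i)\bigr]^{m/2}$. Your own caveat is accurate: the split of the tail integral at $t_0=\sqrt{2V_n\log(2p)}$ combined with the symmetrization factor $2^m$ produces an extra absolute constant (a factor $2$ in front), so some additional care---or a slightly different organization of the Rademacher moment bound---is needed if one insists on the exact constant $8$ rather than a universal multiple of it. For the purposes of this paper that distinction is irrelevant, since the lemma is only used to obtain an $O(\sqrt{\log d})$ bound with an unspecified absolute constant.
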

\noindent and the second one follows from the Schwarz inequality. 
We also have
\[
\frac{1}{2n}\sum_{i=1}^n E[(\wt X'_{ij}-\wt X_{ij})(\wt X'_{ik}-\wt X_{ik})]=\frac{1}{n}\sum_{i=1}^nE[\wt X_{ij}\wt X_{ik}]
\] 
and $\Sigma_{jk}=\frac{1}{n}\sum_{i=1}^nE[X_{ij}X_{ik}]$. 
Therefore, noting $X_{ij}-\wt{X}_{ij}=X_{ij}1_{\{|X_{ij}|>\kappa_n\}}-EX_{ij}1_{\{|X_{ij}|>\kappa_n\}}$, 
the Schwarz inequality and \eqref{trunc-l2} imply that
\ba{
&\max_{1\leq j,k\leq d}\left|\Sigma_{jk}-\frac{1}{2n}\sum_{i=1}^n E[(\wt X'_{ij}-\wt X_{ij})(\wt X'_{ik}-\wt X_{ik})]\right|\\
&\leq\max_{1\leq j,k\leq d}\max_{1\leq i\leq n}\left(\sqrt{E(X_{ij}-\wt{X}_{ij})^2E\wt{X}_{ik}^2}+\sqrt{EX_{ij}^2E(X_{ik}-\wt{X}_{ik})^2}\right)\\
&\leq C\frac{B_n^2\sqrt{\log (dn)}}{n^{5/2}}
\leq C\frac{B_n^2\log(dn)}{n}.
}
Consequently, we obtain
\ba{
\delta_1
\leq C\left(\sqrt{\delta_2\log d}
+\frac{B_n^2\log (dn)}{n}\right).
}
Moreover, Lemma 9 in \cite{CCK15} yields
\ba{
\delta_2
&\leq \frac{C}{n^2}\left\{\max_{1\leq j\leq d}E\left[\sum_{i=1}^n(\wt X'_{ij}-\wt X_{ij})^4\right]
+E\left[\max_{1\leq i\leq n}\max_{1\leq j\leq d}(\wt X'_{ij}-\wt X_{ij})^4\right]\log d\right\}\\
&\leq C\left(\frac{B_n^2}{n}
+\frac{B_n^4\log^3(dn)}{n^2}\right)
\leq C\frac{B_n^2}{n},
}
where the last inequality follows from \eqref{wlog}. 
Therefore, for any $t>0$ satisfying $\eta/\sqrt{t}\leq \sigma_*/\sqrt{\log d}$, we have
\ba{
\wt\delta
\leq C\left(\frac{1}{\sigma_*^2}\sqrt{\frac{B_n^2\log^3(dn)}{n}}(|\log t|\vee1)+\frac{B_n^2\log^2d}{\sigma_*^4 n}\frac{1}{t}+\sqrt{t}\log d\right).
}
Now let $t=(B_n^2\log(dn)/\sigma_*^4 n)^{2/3}$. 
Then we have
\[
\frac{\eta}{\sqrt{t}}\sqrt{\log d}
\leq 4\sqrt5\cdot\frac{B_n\log(dn)}{\sqrt n}\cdot\frac{n^{1/3} \sigma_*^{4/3}}{B_n^{2/3}\log^{1/3}(dn)}
=4\sqrt5 \left(\frac{B_n^2\log^4(dn)}{\sigma_*^4 n}\right)^{1/6}\sigma_*^2\leq \sigma_*
\] 
by \eqref{wlog}. So we can apply the above estimate with this $t$ and obtain
\begin{align*}
\wt\delta
&\leq C\left\{ 
\sqrt{\frac{B_n^2\log^3(dn)}{\sigma_*^4 n}}\left|\log\frac{B_n^2\log(dn)}{\sigma_*^4 n}\right|
+ \left(\frac{B_n^2\log^4 (dn)}{\sigma_*^4 n}\right)^{1/3} 
\right\}\\
&\leq C\left(\frac{B_n^2\log^4 (dn)}{\sigma_*^4 n}\right)^{1/3}, 
\end{align*}
where the last line follows from the inequality $|\log x|\leq C/x^{1/6}$ for $0<x\leq1$. 
 

\subsection{Proof of Theorem~\ref{ft5}}
Without loss of generality, we assume that the right-hand side of \eq{f16} is finite.
Let 
\be{
Y_i=\sum_{i'\in A_i} X_{i'},\quad Y_{ii'}=\sum_{i''\in A_{ii'} }X_{i''}.
}
From the independence assumption and $E X_i=0$, we have, with $U$ being a uniform distribution on $[0,1]$ and independent of everything else,
\bes{
E W \cdot \nabla \psi_t(W)=& \sum_{i=1}^n E X_i \cdot (\nabla \psi_t(W)-\nabla \psi_t(W-Y_i))\\
=& \sum_{i=1}^n \sum_{i'\in A_i} \sum_{j,k=1}^d E X_{ij} X_{i'k}  \partial_{jk} \psi_t(W-U Y_i)    \\
=& \sum_{i=1}^n \sum_{i'\in A_i} \sum_{j,k=1}^d E X_{ij} X_{i'k} \big[ \partial_{jk} \psi_t(W-U Y_i)  -\partial_{jk} \psi_t(W-Y_{ii'})  \big]\\
&+\sum_{i=1}^n \sum_{i'\in A_i} \sum_{j,k=1}^d E X_{ij} X_{i'k} E\partial_{jk} \psi_t(W-Y_{ii'}).
}
Because 
\be{
\sum_{i=1}^n \sum_{i'\in A_i}  E X_{ij} X_{i'k} E \partial_{jk} \psi_t(W)=\Sigma_{jk} E\partial_{jk} \psi_t(W),
}
we have by \eqref{aht3}
\bes{
&|E T_t \tilde h(W)|=|E[\langle \Sigma, \text{Hess} \psi_t(W)\rangle_{H.S.} -W\cdot \nabla \psi_t(W)]|\\
\leq&  \frac{C}{\sigma_*^3\sqrt{t}} (\log d)^{3/2} \sum_{i=1}^n \sum_{i'\in A_i}\sum_{i''\in A_{ii'}} E\big[ \max_{1\leq j,k,l\leq d}  (|X_{ij}X_{i'k}X_{i''l}|+|X_{ij}X_{i'k}|E|X_{i''l}|)\big].
}
Optimizing $t$ gives the desired bound.

\appendix

\section{Appendix}

\subsection{Proof of Proposition~\ref{p1}}\label{sec3-p1}

It suffices to show that there is a sequence $(x_n)_{n=1}^\infty$ of real numbers such that
\[
\rho:=\limsup_{n\to\infty}\sqrt{\frac{n}{\log^3d}}\left|P\left(\max_{1\leq j\leq d}W_j\leq x_n\right)-P\left(\max_{1\leq j\leq d}Z_j\leq x_n\right)\right|>0.
\]
We denote by $\phi_1$ and $\Phi_1$ the density and distribution function of the standard normal distribution, respectively.
For every $n$, we define $x_n\in\mathbb{R}$ as the solution of the equation $\Phi_1(x)^{d}=e^{-1}$, i.e.~$x_n:=\Phi_1^{-1}(e^{-1/d})$. 
Then we have $x_n/\sqrt{2\log d}\to1$ and $d(1-\Phi_1(x_n))\to1$ as $n\to\infty$ (cf.~the proof of Proposition 2.1 in \cite{Koike19}). 
Applying Theorem 1 in \cite{AGG89} with $I=\{1,\dots,d\}$, $B_\alpha=\{\alpha\}$ and $X_\alpha=1_{\{W_\alpha>x_n\}}$, we obtain
\begin{align*}
\left|P\left(\max_{1\leq j\leq d}W_j\leq x_n\right)-e^{-\lambda_n}\right|
\leq dP\left(W_1>x_n\right)^2,
\end{align*}
where $\lambda_n:=dP\left(W_1>x_n\right)$. 
By an analogous argument we also obtain
\[
\left|P\left(\max_{1\leq j\leq d}Z_j\leq x_n\right)-e^{-d(1-\Phi_{1}(x_n))}\right|
\leq d(1-\Phi_1(x_n))^2.
\]
Hence we have
\begin{align*}
&\left|P\left(\max_{1\leq j\leq d}W_j\leq x_n\right)-P\left(\max_{1\leq j\leq d}Z_j\leq x_n\right)\right|\\
&\geq|e^{-\lambda_n}-e^{-d(1-\Phi_1(x_n))}|-dP\left(W_1>x_n\right)^2-d(1-\Phi_1(x_n))^2.
\end{align*}

Now, since $x_n=O(\sqrt{\log d})=o(n^{1/6})$ by assumption, Theorem 1 in \cite[Chapter VIII]{Petrov75} (see also Eq.(2.41) in \cite[Chapter VIII]{Petrov75}) implies
\begin{equation}\label{eq:mdp}
\frac{P\left( W_1>x_n\right)}{1-\Phi_1(x_n)}=\exp\left(\frac{\gamma}{6\sqrt{n}}x_n^3\right)+O\left(\frac{x_n+1}{\sqrt{n}}\right).
\end{equation}
In particular, since $d(1-\Phi_1(x_n))\to1$, we have $dP(W_1>x)^2=O(d^{-1})$ and $d(1-\Phi_1(x_n))^2=O(d^{-1})$. Thus we obtain
\begin{equation}\label{p1-aim1}
\rho\geq\limsup_{n\to\infty}\sqrt{\frac{n}{\log^3d}}|e^{-\lambda_n}-e^{-d(1-\Phi_1(x_n))}|
\end{equation}
because $d^{-1}=o(n^{-1}\log^3d)$ by assumption. 
Moreover, using the Taylor expansion of the exponential function around 0, we deduce from \eqref{eq:mdp} 
\begin{equation*}
\lambda_n=d(1-\Phi_1(x_n))+\frac{\gamma}{6\sqrt{n}}x_n^3+o\left(\frac{x_n^3}{\sqrt{n}}\right)
\end{equation*}
and
\begin{align*}
e^{-\{\lambda_n-d(1-\Phi_1(x_n))\}}
=1-\frac{\gamma}{6\sqrt{n}}x_n^3+o\left(\frac{x_n^3}{\sqrt{n}}\right).
\end{align*}
Therefore, by \eqref{p1-aim1} we conclude that
\[
\rho\geq e^{-1}\frac{\sqrt{2}|\gamma|}{3}
\]
because $x_n/\sqrt{2\log d}\to1$. This completes the proof. 


\subsection{Proof of Lemma \ref{L2}}\label{sec3-l2}


First we introduce some notation. 
We denote by $\phi_1$ and $\Phi_1$ the density and distribution function of the standard normal distribution, respectively. We set $\bar\phi_1(u):=\phi_1(u)/\Phi_1(u)$. 
Obviously, $\bar\phi_1$ is strictly decreasing on $[0,\infty)$.

For a non-negative integer $\nu$, the $\nu$-th Hermite polynomial is denoted by $H_\nu$: $H_\nu(u)=(-1)^\nu\phi_1(u)^{-1}\phi_1^{(\nu)}(u)$.  
When $\nu\geq1$, we define the functions $h_\nu$ and $\bar h_\nu$ on $\mathbb{R}$ by 
\[
h_\nu(u)=H_{\nu-1}(u)\phi_1(u),\qquad
\bar h_{\nu}(u)=h_{\nu}(u)/\Phi_1(u)=H_{\nu-1}(u)\bar\phi_1(u)\qquad(u\in\mathbb{R}). 
\]
A simple computation shows 
\begin{equation}\label{eq:deriv-h}
h_\nu'(u)=-h_{\nu+1}(u),\qquad
\bar h_\nu'(u)=-\{H_{\nu}(u)+\bar h_\nu(u)\}\bar\phi_1(u).
\end{equation}
Also, we define the functions $\lambda$ and $\Lambda$ on $[0,\infty)$ by
\[
\lambda(u)=\frac{\phi_1(u)}{\phi_1(u+2\eta)}=e^{2u\eta+2\eta^2},\qquad
\Lambda(u)=\frac{\Phi_1(u)}{\Phi_1(u+2\eta)}\qquad(u\in[0,\infty)).
\]
A simple computation shows
\begin{equation}\label{eq:deriv-lambda}
\Lambda'(u)=\Lambda(u)\{\bar\phi_1(u)-\bar\phi_1(u+2\eta)\}.
\end{equation}
In particular, $\Lambda$ is non-decreasing on $[0,\infty)$. 

To extend the proof for the case with $r=1$ and $\eta=0$ to the general case, we need to deduce a bound analogous to \eqref{AHT-est}. Roughly speaking, we need to replace $\phi_1$ in the middle equation of \eqref{AHT-est} by $h_\nu$ to accomplish this. In the derivation of \eqref{AHT-est}, it plays a crucial role that $\phi_1$ is decreasing on $[0,\infty)$. 
However, $h_\nu$ does not have this property in general, so we will dominate it by an appropriate decreasing function to proceed analogously to the derivation of \eqref{AHT-est}. 
For this purpose, we need to introduce some additional notation. We denote by $u_\nu$ the maximum root of $H_\nu$. For example, $u_1=0,u_2=1,u_3=\sqrt{3}$. It is evident that $H_\nu$ is positive and strictly increasing on $(u_\nu,\infty)$. 
We also have $u_1<u_2<\cdots$ (see e.g.~\cite[Theorem 3.3.2]{Sz39}). 
Finally, set $M_{\nu}:=\max_{0\leq u\leq u_\nu}|H_{\nu-1}(u)|<\infty$ and define the function $\wt h_\nu:[0,\infty)\to(0,\infty)$ by
\[
\wt h_\nu(u)=M_{\nu}\phi_1(u)1_{[0,u_\nu]}(u)+h_{\nu}(u)1_{(u_\nu,\infty)}(u)\qquad (u\in[0,\infty)).
\]

%
\begin{lemma}\label{l2-1}
$\tilde h_\nu$ is decreasing on $[0,\infty)$ and $|h_{\nu}(u)|\leq \tilde h_\nu(|u|)$ for all $u\in\mathbb{R}$. 
\end{lemma}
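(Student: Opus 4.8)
The plan is to prove the two assertions separately, starting from the definition
\[
\wt h_\nu(u)=M_{\nu}\phi_1(u)1_{[0,u_\nu]}(u)+h_{\nu}(u)1_{(u_\nu,\infty)}(u),\qquad u\in[0,\infty),
\]
where $h_\nu(u)=H_{\nu-1}(u)\phi_1(u)$ and $M_\nu=\max_{0\le u\le u_\nu}|H_{\nu-1}(u)|$. For the monotonicity claim, I would split $[0,\infty)$ at $u_\nu$. On $[0,u_\nu]$ the function equals $M_\nu\phi_1(u)$, which is decreasing because $\phi_1$ is decreasing on $[0,\infty)$. On $(u_\nu,\infty)$ it equals $h_\nu(u)=H_{\nu-1}(u)\phi_1(u)$; here I would use the identity $h_\nu'(u)=-h_{\nu+1}(u)=-H_\nu(u)\phi_1(u)$ from \eqref{eq:deriv-h}, together with the fact recorded just before the lemma that $H_\nu$ is positive on $(u_\nu,\infty)$ (since $u_\nu$ is the maximum root of $H_\nu$ and $H_\nu$ is increasing past it). Thus $h_\nu'<0$ on $(u_\nu,\infty)$, so $\wt h_\nu$ is decreasing there as well. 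The one point needing care is continuity/ordering at the junction $u=u_\nu$: I must check $\lim_{u\downarrow u_\nu}h_\nu(u)=H_{\nu-1}(u_\nu)\phi_1(u_\nu)\le M_\nu\phi_1(u_\nu)$, which is immediate from $|H_{\nu-1}(u_\nu)|\le M_\nu$, so the piecewise function does not jump upward at $u_\nu$ and global monotonicity on $[0,\infty)$ follows.

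For the domination claim $|h_\nu(u)|\le\wt h_\nu(|u|)$ for all $u\in\mathbb R$, I would first reduce to $u\ge0$ by parity: $H_{\nu-1}$ has the parity of $\nu-1$ and $\phi_1$ is even, so $|h_\nu(-u)|=|h_\nu(u)|$, and it suffices to bound $|h_\nu(u)|$ for $u\ge0$ by $\wt h_\nu(u)$. For $0\le u\le u_\nu$ we have $|h_\nu(u)|=|H_{\nu-1}(u)|\phi_1(u)\le M_\nu\phi_1(u)=\wt h_\nu(u)$ directly from the definition of $M_\nu$. For $u>u_\nu$ we have $H_{\nu-1}(u)>0$: indeed $u_{\nu-1}<u_\nu$ (the roots interlace, $u_1<u_2<\cdots$) and $H_{\nu-1}$ is positive and increasing on $(u_{\nu-1},\infty)\supset(u_\nu,\infty)$, so $|h_\nu(u)|=H_{\nu-1}(u)\phi_1(u)=h_\nu(u)=\wt h_\nu(u)$. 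This covers all cases.

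I expect the only mild obstacle to be keeping the indexing of Hermite roots straight — one must be careful that $\wt h_\nu$ is defined using the threshold $u_\nu$ (the maximum root of $H_\nu$), while $h_\nu$ involves $H_{\nu-1}$, so the positivity of $H_{\nu-1}$ on $(u_\nu,\infty)$ relies on the strict interlacing $u_{\nu-1}<u_\nu$ rather than on a direct statement about $H_{\nu-1}$. Apart from that bookkeeping, both parts are short consequences of \eqref{eq:deriv-h}, the elementary monotonicity of $\phi_1$ on $[0,\infty)$, and the known facts about Hermite polynomials quoted in the paragraph preceding the lemma.
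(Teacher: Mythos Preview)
Your proposal is correct and follows exactly the same (very short) line of argument as the paper's proof, which simply notes $h_\nu'(u)<0$ for $u>u_\nu$ and declares both claims ``evident by construction''; you have merely filled in the details the paper omits. One tiny bookkeeping remark: your justification that $H_{\nu-1}>0$ on $(u_\nu,\infty)$ via the interlacing $u_{\nu-1}<u_\nu$ needs a separate word for $\nu=1$, since $H_0\equiv1$ has no root and $u_0$ is undefined---but then the positivity is trivial anyway.
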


\begin{proof}
Note that $h_\nu'(u)<0$ when $u>u_\nu$. Then, $\tilde h_\nu$ is evidently decreasing on $[0,\infty)$ by construction. The latter claim is also obvious by construction. 
\end{proof}

%

We will also need a counterpart of the latter part of the proof for the case with $r=1$ and $\eta=0$. The subsequent two lemmas will be used for this purpose.
\begin{lemma}\label{l2-2}
The function $u\mapsto H_\nu(u)\lambda(u)+\bar h_\nu(u)$ is strictly increasing on $[u_\nu,\infty)$. 
\end{lemma}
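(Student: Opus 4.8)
The claim is that $g(u) := H_\nu(u)\lambda(u) + \bar h_\nu(u)$ is strictly increasing on $[u_\nu,\infty)$, where $\lambda(u) = e^{2u\eta + 2\eta^2}$ and $\bar h_\nu(u) = H_{\nu-1}(u)\bar\phi_1(u)$. The plan is to differentiate $g$ directly and show $g'(u) > 0$ for $u > u_\nu$ (with one-sided behaviour at $u_\nu$ handled by continuity). First I would use the recurrence $H_\nu'(u) = \nu H_{\nu-1}(u)$ for Hermite polynomials together with $\lambda'(u) = 2\eta\lambda(u)$ to get
\[
\frac{d}{du}\bigl(H_\nu(u)\lambda(u)\bigr) = \bigl(\nu H_{\nu-1}(u) + 2\eta H_\nu(u)\bigr)\lambda(u),
\]
and for the second piece I would invoke the identity \eqref{eq:deriv-h}, namely $\bar h_\nu'(u) = -\{H_\nu(u) + \bar h_\nu(u)\}\bar\phi_1(u)$. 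Adding these gives an explicit formula for $g'$.

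Next I would organise the resulting expression to exploit the signs available on $[u_\nu,\infty)$: there $H_{\nu-1}$ and $H_\nu$ are both nonnegative and $H_{\nu-1}$ is strictly positive on $(u_\nu,\infty)$ (since $u_{\nu-1} < u_\nu$, the maximal root of $H_{\nu-1}$ lies strictly below $u_\nu$), and $\lambda(u) \geq 1$ while $\bar\phi_1(u) > 0$. The term $\nu H_{\nu-1}(u)\lambda(u)$ is strictly positive on $(u_\nu,\infty)$ and is the ``engine'' of the monotonicity; the terms $2\eta H_\nu(u)\lambda(u)$ and $\bar\phi_1(u)\bar\phi_1'$-type contributions need to be shown not to overwhelm it. The key elementary fact I would use is that on $[u_\nu,\infty)$ one has $\bar h_\nu(u) = H_{\nu-1}(u)\bar\phi_1(u) \geq 0$ and, crucially, a bound of the form $H_\nu(u)\bar\phi_1(u) \leq \nu H_{\nu-1}(u)$ or similar — this follows from the three-term recurrence $H_\nu(u) = u H_{\nu-1}(u) - (\nu-1)H_{\nu-2}(u)$ combined with the standard estimate $\bar\phi_1(u) = \phi_1(u)/\Phi_1(u) \leq$ a suitable decreasing bound (using $\Phi_1(u) \geq 1/2$ on $[0,\infty)$, so $\bar\phi_1(u) \leq 2\phi_1(u) \leq 2\phi_1(u_\nu)$, which is a small constant). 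Pinning down the precise comparison is the main technical point.

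The main obstacle is controlling the term $-H_\nu(u)\bar\phi_1(u)$ (the negative contribution from $\bar h_\nu'$) against the positive $\nu H_{\nu-1}(u)\lambda(u) \geq \nu H_{\nu-1}(u)$: one needs $\nu H_{\nu-1}(u) > H_\nu(u)\bar\phi_1(u)$ on $(u_\nu,\infty)$. For large $u$ this is clear since $\bar\phi_1(u) \to 0$ superpolynomially while $H_\nu(u)/H_{\nu-1}(u) = O(u)$; near $u = u_\nu$ one has $H_{\nu-1}(u_\nu) > 0$ fixed while $\bar\phi_1$ is bounded, so again it holds after checking the constants. I would therefore split $[u_\nu,\infty)$ into a compact piece near $u_\nu$ (where everything is continuous and the strict inequality $H_{\nu-1} > 0$ on the relevant closed subinterval forces $g' > 0$) and a tail where $\bar\phi_1$ decays, handling each separately; alternatively, a single clean inequality $\bar\phi_1(u)\bigl(u - \bar\phi_1(u)\bigr) \leq$ something, in the spirit of the bound $u^* = (d-1)\bar\phi_1(u^*)$ used in the $r=1$ proof, may let me avoid the case split entirely. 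Either way, once $g'(u) > 0$ on $(u_\nu,\infty)$ is established, strict monotonicity on $[u_\nu,\infty)$ follows immediately from the mean value theorem.
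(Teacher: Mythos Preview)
Your direct differentiation is fine and in fact, after you drop the manifestly nonnegative term $2\eta H_\nu(u)\lambda(u)$ and use $\lambda(u)\geq 1$, what remains to show is exactly
\[
\nu H_{\nu-1}(u)-H_\nu(u)\bar\phi_1(u)-H_{\nu-1}(u)\bar\phi_1(u)^2>0\qquad(u>u_\nu),
\]
which is precisely the statement that $H_\nu+\bar h_\nu$ is strictly increasing on $(u_\nu,\infty)$. So you have, in effect, reproduced the paper's reduction: it too writes $H_\nu\lambda+\bar h_\nu=H_\nu(\lambda-1)+(H_\nu+\bar h_\nu)$, observes the first summand is nondecreasing on $[u_\nu,\infty)$, and is left with proving that $g:=H_\nu+\bar h_\nu$ is strictly increasing.

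The gap is in how you propose to verify this last inequality. Saying ``on a compact piece near $u_\nu$ continuity and $H_{\nu-1}>0$ force $g'>0$'' is not an argument: the negative contribution $-H_\nu(u)\bar\phi_1(u)$ is zero at $u_\nu$ but grows immediately thereafter, and nothing in your plan rules out that it dominates $\nu H_{\nu-1}(u)$ for some intermediate $u$ and some $\nu$. ``Checking the constants'' would have to be done for every $\nu$, and you give no mechanism for this. The tail argument is unproblematic, but the compact piece is exactly where the work lies, and it is left undone.

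The paper's device is clean and worth noting: instead of estimating $g'$ directly, multiply through by $\Phi_1$ to form $g_1(u):=H_\nu'(u)\Phi_1(u)-h_{\nu+1}(u)-\bar h_\nu(u)\phi_1(u)$, so that $g'=\Phi_1^{-1}g_1$. Then differentiate once more: using $H_{\nu+1}(u)=uH_\nu(u)-H_\nu'(u)$, one finds
\[
g_1'(u)=\nu(\nu-1)H_{\nu-2}(u)\Phi_1(u)+uH_\nu(u)\phi_1(u)-\bar h_\nu'(u)\phi_1(u)+u\bar h_\nu(u)\phi_1(u),
\]
each term of which is nonnegative on $(u_\nu,\infty)$ (recall $H_k>0$ there for $k\leq\nu$ and $\bar h_\nu'\leq0$). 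Hence $g_1(u)>g_1(u_\nu)$, and a direct evaluation gives $g_1(u_\nu)=H_{\nu-1}(u_\nu)\{\nu\Phi_1(u_\nu)-\bar\phi_1(u_\nu)\phi_1(u_\nu)\}\geq 1/2-1/\pi>0$. This second-derivative trick is the missing idea in your plan; without it (or a genuine substitute that works uniformly in $\nu$), the proposal is incomplete.
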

\begin{proof}
Since $H_\nu(u)\lambda(u)+\bar h_\nu(u)=H_\nu(u)\{\lambda(u)-1\}+\{H_\nu(u)+\bar h_\nu(u)\}$ and the function $u\mapsto H_\nu(u)\{\lambda(u)-1\}$ is non-decreasing on $[u_\nu,\infty)$, it suffices to prove $g:=H_\nu+\bar h_\nu$ is strictly increasing on $[u_\nu,\infty)$.
We have
\begin{align*}
g'(u)&=H'_{\nu}(u)-\{H_{\nu}(u)+\bar h_\nu(u)\}\bar\phi_1(u)\\
&=\Phi_1(u)^{-1}\{H'_{\nu}(u)\Phi_1(u)-h_{\nu+1}(u)-\bar h_{\nu}(u)\phi_1(u)\}.
\end{align*}
So we complete the proof once we show $g_1(u):=H'_{\nu}(u)\Phi_1(u)-h_{\nu+1}(u)-\bar h_{\nu}(u)\phi_1(u)>0$ for all $u> u_\nu$. We have
\begin{align*}
g_1'(u)&=H''_{\nu}(u)\Phi_1(u)+H'_{\nu}(u)\phi_1(u)+h_{\nu+2}(u)-\bar h_{\nu}'(u)\phi_1(u)+u\bar h_{\nu}(u)\phi_1(u)\\
&=\nu(\nu-1)H_{\nu-2}(u)\Phi_1(u)+u H_{\nu}(u)\phi_1(u)-\bar h_{\nu}'(u)\phi_1(u)+u\bar h_{\nu}(u)\phi_1(u),
\end{align*}
where the identity $H_{\nu+1}(u)=uH_\nu(u)-H_\nu'(u)$ is used to deduce the last line. Since $H_k(u)>0$ for $k\leq\nu$ and $u> u_\nu$, we have $g_1'(u)>0$ for $u>u_\nu$. Thus 
\[
g_1(u)>g_1(u_\nu)
=H_{\nu-1}(u_\nu)\{\nu\Phi_1(u_\nu)-\phi_1(u_\nu)\bar\phi_1(u_\nu)\}
\geq1/2-1/\pi>0
\] 
for all $u>u_\nu$. 
\end{proof}

Define the functions $F_\nu$ and $G$ on $\mathbb{R}^d$ by
\[
F_\nu(x)=\sum_{j=1}^d\bar h_{\nu}(x_{j})\Lambda(x_j),\qquad
G(x)=\prod_{k=1}^d\Phi_1(x_k+2\eta)\qquad (x\in\mathbb{R}^d).
\]

\begin{lemma}\label{l2-3}
For any $\beta>0$, 
\[
\sup_{x\in[u_\nu,\infty)^d}F_\nu(x)^\beta G(x)
=O((\log d)^{\beta\nu/2})
\]
as $d\to\infty$. 
\end{lemma}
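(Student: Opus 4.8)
The plan is to mimic the structure of the displayed proof of Lemma~\ref{L2} with $r=1,\eta=0$: find a maximizer, show by a first-order condition that it can be taken on the diagonal, and then estimate the common coordinate value. First I would note that, since $\Lambda$ is non-decreasing and $\bar h_\nu(u)=H_{\nu-1}(u)\bar\phi_1(u)$ is eventually positive and decreasing (for $u$ past some threshold governed by $u_{\nu-1}$ and the decay of $\bar\phi_1$), the function $F_\nu(x)^\beta G(x)$ is bounded on $[u_\nu,\infty)^d$, and in fact a supremum-attaining argument like the one bounding $x_\ell>d$ in the $r=1$ proof confines a maximizer $x^*$ to a compact box $[u_\nu,B_d]^d$ with $B_d$ polynomial in $d$ (using $F_\nu(x)\le d\cdot \mathrm{const}$ on the relevant region together with $\partial_\ell[F_\nu^\beta G]<0$ once $x_\ell$ is large, because the $G$-factor's logarithmic derivative $-\bar\phi_1(x_\ell+2\eta)\to$ behaves like $-(x_\ell+2\eta)$ while $F_\nu$ grows only polynomially). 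Hence a maximizer $x^*$ with $\nabla(F_\nu^\beta G)(x^*)=0$ exists in the interior.

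The heart of the argument is the diagonalization step. Writing out $\partial_\ell\log(F_\nu^\beta G)(x)=0$ gives, for each $\ell$,
\[
\beta\,\frac{\bar h_\nu'(x_\ell)\Lambda(x_\ell)+\bar h_\nu(x_\ell)\Lambda'(x_\ell)}{F_\nu(x)}
=\bar\phi_1(x_\ell+2\eta).
\]
Using the derivative formulas \eqref{eq:deriv-h} and \eqref{eq:deriv-lambda}, the numerator simplifies: $\bar h_\nu'\Lambda+\bar h_\nu\Lambda'=-\{H_\nu(u)+\bar h_\nu(u)\}\bar\phi_1(u)\Lambda(u)+\bar h_\nu(u)\Lambda(u)\{\bar\phi_1(u)-\bar\phi_1(u+2\eta)\}=-\Lambda(u)\bar\phi_1(u)\{H_\nu(u)\lambda(u)+\bar h_\nu(u)\}$ (after using $\lambda(u)=\bar\phi_1(u)^{-1}\bar\phi_1(u+2\eta)^{-1}\cdot$... — more precisely absorbing the $\bar\phi_1(u+2\eta)$ into $\lambda$ via $\Lambda(u)\bar\phi_1(u+2\eta)\bar h_\nu(u)=\ldots$; this is the one routine computation I'd do carefully). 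The upshot is that the stationarity condition becomes
\[
H_\nu(x_\ell)\lambda(x_\ell)+\bar h_\nu(x_\ell) \;=\; -\frac{F_\nu(x^*)}{\beta}\cdot(\text{a factor independent of }\ell),
\]
so all coordinates $x^*_\ell$ solve the same equation. By Lemma~\ref{l2-2} the map $u\mapsto H_\nu(u)\lambda(u)+\bar h_\nu(u)$ is strictly increasing on $[u_\nu,\infty)$, hence injective, forcing $x^*_1=\cdots=x^*_d=:u^*$.

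With the maximizer on the diagonal, $F_\nu(x^*)=d\,\bar h_\nu(u^*)\Lambda(u^*)$ and $G(x^*)=\Phi_1(u^*+2\eta)^d$, so
\[
\sup F_\nu^\beta G=\bigl(d\,\bar h_\nu(u^*)\Lambda(u^*)\bigr)^\beta\Phi_1(u^*+2\eta)^d
=\bigl(d\,H_{\nu-1}(u^*)\bar\phi_1(u^*)\Lambda(u^*)\bigr)^\beta\Phi_1(u^*+2\eta)^d.
\]
The stationarity equation evaluated on the diagonal gives a relation of the form $u^*+O(1)=(d-1)\,(\text{something}\asymp\bar\phi_1(u^*))$, i.e.\ $\bar\phi_1(u^*)\asymp u^*/d$; plugging the Gaussian tail asymptotics $\bar\phi_1(u)\sim u$ as $u\to\infty$ (more precisely $\bar\phi_1(u)\le\phi_1(u)/\Phi_1(u)$ and $1-\Phi_1(u)\sim\phi_1(u)/u$) shows $e^{-(u^*)^2/2}\asymp 1/d$ up to polynomial factors, hence $u^*=O(\sqrt{\log d})$ — exactly as in the $g_2$ argument at the end of the $r=1$ proof. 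Since $\Lambda(u^*)\le \Phi_1(u^*)/\Phi_1(u^*)\cdot(\ldots)$ is bounded (indeed $\Lambda\le 1$ is false in general, but $\Lambda(u)=\Phi_1(u)/\Phi_1(u+2\eta)$ with $\eta=K/\sqrt{\log d}\to0$ stays bounded, say $\le 2$, uniformly), and $H_{\nu-1}(u^*)=O((u^*)^{\nu-1})=O((\log d)^{(\nu-1)/2})$, while $d\,\bar\phi_1(u^*)=O(u^*)=O(\sqrt{\log d})$ and $\Phi_1(u^*+2\eta)^d=O(1)$, we get $\sup F_\nu^\beta G=O\bigl((\log d)^{\beta\nu/2}\bigr)$.

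\medskip
\noindent\textbf{Main obstacle.} The delicate point is the very first step: justifying that a maximizer exists in the interior of $[u_\nu,\infty)^d$ and that the first-order condition is usable, because $\bar h_\nu$ is \emph{not} monotone near $u_\nu$ (this is precisely why $\wt h_\nu$ and Lemma~\ref{l2-1} were introduced). I expect one must either (i) argue that the maximizer has all coordinates bounded below away from the non-monotone region, or (ii) run the whole comparison through $\wt h_\nu$ — dominating $h_\nu$ by the decreasing majorant $\wt h_\nu$ — to even set up an analogue of \eqref{AHT-est}, and only afterwards use Lemma~\ref{l2-2} on the region $[u_\nu,\infty)$ where genuine monotonicity holds. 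Getting the bookkeeping right so that the $\eta$-dependence ($\eta=K/\sqrt{\log d}$) contributes only bounded factors $\lambda(u^*),\Lambda(u^*)=O(1)$ — which needs $u^*\eta=O(\sqrt{\log d}\cdot K/\sqrt{\log d})=O(1)$ — is the other place where care is required, though it is exactly the reason the scaling $\eta_d=K/\sqrt{\log d}$ was chosen.
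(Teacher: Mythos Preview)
Your overall strategy---find a maximizer, use the first-order condition together with Lemma~\ref{l2-2} to force equality of coordinates, then show the common value is $O(\sqrt{\log d})$---is exactly the paper's. The derivative computation and the final bookkeeping with $\lambda(u^*),\Lambda(u^*)=O(1)$ (since $u^*\eta=O(1)$) are also right in spirit, modulo sign and the missing $\Lambda$ factor on the left of the stationarity equation (the paper uses that $u\mapsto\{H_\nu(u)\lambda(u)+\bar h_\nu(u)\}\Lambda(u)$ is strictly increasing, combining Lemma~\ref{l2-2} with the monotonicity of $\Lambda$).

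Where you go astray is in diagnosing and resolving the boundary issue. The obstacle is not non-monotonicity of $\bar h_\nu$ near $u_\nu$ (that was already absorbed by $\wt h_\nu$ \emph{before} this lemma is invoked; the statement here is on $[u_\nu,\infty)^d$ where $\bar h_\nu>0$). The real issue is simply that a maximizer $x^*\in[u_\nu,\infty)^d$ may have some coordinates \emph{equal} to $u_\nu$, and for those the equation $\partial_\ell f(x^*)=0$ need not hold. Your option (i) (show the maximizer is interior) is not what the paper does and would be hard to establish directly; option (ii) is a confusion of roles.

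The paper's fix is short: let $m$ be the number of coordinates of $x^*$ strictly greater than $u_\nu$. If $m\le\beta$, then at least $d-\beta$ coordinates equal $u_\nu$, so $G(x^*)\le\Phi_1(u_\nu+2\eta)^{d-\beta}\to0$ and $f(x^*)=o(1)$. If $m>\beta$, apply the first-order condition only to those $m$ coordinates; by symmetry take $x_1^*,\dots,x_m^*>u_\nu=x_{m+1}^*=\cdots=x_d^*$, deduce $x_1^*=\cdots=x_m^*=:u^*$, and the stationarity identity becomes
\[
\beta\{H_\nu(u^*)\lambda(u^*)+\bar h_\nu(u^*)\}\Lambda(u^*)=F_\nu(x^*)=m\,\bar h_\nu(u^*)\Lambda(u^*)+(d-m)\,\bar h_\nu(u_\nu)\Lambda(u_\nu).
\]
From this one shows $u^*=O(\sqrt{\log d})$ via an increasing-function/root argument (your $g_2$ heuristic), and then bounds $f(x^*)$ directly by $F_\nu(x^*)^\beta=\beta^\beta\{H_\nu(u^*)\lambda(u^*)+\bar h_\nu(u^*)\}^\beta\Lambda(u^*)^\beta\le\beta^\beta\{H_\nu(u^*)\lambda(u^*)+\bar h_\nu(u^*)\}^\beta=O((\log d)^{\beta\nu/2})$, since $G\le1$ and $\Lambda\le1$. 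This last step is cleaner than your route through $d\bar\phi_1(u^*)=O(u^*)$, which implicitly assumed $m=d$.
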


\begin{proof}
Define the function $f$ on $\mathbb{R}^d$ by $f(x)=F_\nu(x)^\beta G(x)$, $x\in\mathbb{R}^d$. 
First we prove $f$ has a maximizer on $[u_\nu,\infty)^d$. 
Using \eqref{eq:deriv-h}--\eqref{eq:deriv-lambda}, we obtain
\ba{
\partial_lf(x)&=\beta F_\nu(x)^{\beta-1}\{\bar h_\nu'(x_l)\Lambda(x_l)+\bar h_\nu(x_l)\Lambda'(x_l)\}G(x)
+F_\nu(x)^\beta\bar\phi_1(x_l+2\eta)G(x)\\
&=\left[-\beta\left\{H_{\nu}(x_l)\lambda(x_l)+\bar h_\nu(x_l)\right\}\Lambda(x_l)
+F_\nu(x)\right]\bar\phi_1(x_l+2\eta)F_\nu(x)^{\beta-1}G(x).
}
Now, since $\beta\left\{ H_{\nu}(x)\lambda(x)+\bar h_\nu(x) \right\} \Lambda(x)\to\infty$ as $x\to\infty$ while $\sup_{x\in[u_\nu,\infty)^d}F_\nu(x)<\infty$, there is a number $\bar u\geq u_\nu$ such that for all $x$ with $x_l\geq \bar u$, 
$\partial_lf(x)<0$. 
This means $\sup_{x\in[u_\nu,\infty)^d}f(x)=\sup_{x\in[u_\nu,\bar u]^d}f(x)$ and thus $f$ has a maximizer on $[u_\nu,\infty)^d$.  

Let $x^*$ be a maximizer of $f$ on $[u_\nu,\infty)^d$. 
Then the proof is completed once we show $f(x^*)=O((\log d)^{\beta\nu/2})$ as $d\to\infty$. 
Let $m$ be the number of components in $x^*$ greater than $u_\nu$. 
If $m\leq\beta$, we have 
\[
f(x^*)\leq\left\{d\sup_{u\in[u_\nu,\infty)}\bar h_\nu(u)\Lambda(u)\right\}^\beta\Phi_1(u_\nu+2\eta)^{d-\beta}=o(1)
\]
as $d\to\infty$, so it suffices to consider the case $m>\beta$. 

Since $f$ is symmetric, we may assume $x^*_1,\dots,x^*_m>u_\nu$ and $x^*_{m+1}=\cdots=x^*_d=u_\nu$ without loss of generality. 
Then, for every $l=1,\dots,m$, we must have $\partial_lf(x^*)=0$. 
Thus we obtain
\[
\{H_{\nu}(x^*_1)\lambda(x^*_1)+\bar h_\nu(x^*_1)\}\Lambda(x^*_1)
=\cdots
=\{H_{\nu}(x^*_m)\lambda(x^*_m)+\bar h_\nu(x^*_m)\}\Lambda(x^*_m)
=\beta^{-1}F_\nu(x^*).
\]
Since $\Lambda$ is non-decreasing, the function $u\mapsto\beta\{H_{\nu}(u)\lambda(u)+\bar h_\nu(u)\}\Lambda(u)$ is strictly increasing on $[u_\nu,\infty)$ by Lemma \ref{l2-2}. 
Therefore, we have $x^*_1=\cdots=x^*_m=:u^*$ and hence 
\[
\beta\{H_{\nu}(u^*)\lambda(u^*)+\bar h_\nu(u^*)\}\Lambda(u^*)=F_\nu(x^*)=m\bar h_\nu(u^*)\Lambda(u^*)+(d-m)\bar h_\nu(u_\nu)\Lambda(u_\nu).
\] 
Now let $g_2(u):=\beta H_\nu(u)\lambda(u)\Lambda(u)\Phi_1(u)-(m-\beta) h_\nu(u)\Lambda(u)-(d-m)h_\nu(u_\nu)\Lambda(u_\nu)$. Then we have $g_2(u^*)=0$, $g_2(u_\nu)<0$ and $g_2(\sqrt{2\log d})\to\infty$ as $d\to\infty$ (note that $\lambda\geq1$ on $[0,\infty)$, $\Lambda(\sqrt{2\log d})\to1$ and $\eta=O(1)$ as $d\to\infty$). Since $g_2$ is increasing on $[u_\nu,\infty)$, we conclude $u^*=O(\sqrt{\log d})$ as $d\to\infty$. 
Consequently, we obtain $\lambda(u^*)=O(1)$ as $d\to\infty$ and thus
\[
f(x^*)\leq\beta^\beta\{H_{\nu}(u^*)\lambda(u^*)+\bar h_\nu(u^*)\}^\beta=O((\log d)^{\beta\nu/2})
\]
as $d\to\infty$. 
This completes the proof. 
\end{proof}

Now we are ready to prove Lemma \ref{L2}. 
\begin{proof}[Proof of Lemma \ref{L2}]
For every $q\in\{1,\dots,r\}$, set 
\ba{
\mathcal{N}_q(r)&:=\{(\nu_1,\dots,\nu_q)\in\mathbb{Z}^q:\nu_1,\dots,\nu_q\geq0,\nu_1+\cdots+\nu_q=r\},\\
\mathcal{J}_q(d)&:=\{(j_1,\dots,j_q)\in\{1,\dots,d\}^q:j_{p}\neq j_{p'}\text{ if }p\neq p'\}.
}
Then we have for all $A\in\mathcal{R}$
\begin{align*}
&\sum_{j_1,\dots,j_r=1}^d\sup_{y\in R(0;\eta)}\left|\int_A\partial_{j_1,\dots, j_r}\phi(z+y)dz\right|\\
&\leq C_r\sum_{q=1}^r\sum_{(\nu_1,\dots,\nu_q)\in\mathcal{N}_q(r)}\sum_{(j_1,\dots,j_q)\in\mathcal{J}_q(d)}\sup_{y\in R(0,\eta)}\left|\int_A\partial^{\nu_1}_{j_1}\cdots\partial^{\nu_q}_{j_q}\phi(z+y)dz\right|,
\end{align*}
where $C_r>0$ depends only on $r$. 
Therefore, we obtain the desired result once we prove
\[
\sup_{A\in\mathcal{R}}\sum_{(j_1,\dots,j_q)\in\mathcal{J}_q(d)}\sup_{y\in R(0;\eta)}\left|\int_A\partial^{\nu_1}_{j_1}\cdots\partial^{\nu_q}_{j_q}\phi(z+y)dz\right|
=O((\log d)^{r/2})
\qquad
\text{as }d\to\infty
\]
for any (fixed) $(\nu_1,\dots,\nu_q)\in\mathcal{N}_q(r)$ with $q\in\{1,\dots,r\}$. 

Take $A=\prod_{j=1}^d(a_j,b_j)\in\mathcal{R}$ arbitrarily and set
\[
I_A:=\sum_{(j_1,\dots,j_q)\in\mathcal{J}_q(d)}\sup_{y\in R(0;\eta)}\left|\int_A\partial^{\nu_1}_{j_1}\cdots\partial^{\nu_q}_{j_q}\phi(z+y)dz\right|.
\]
Then we have
\begin{align*}
I_A
&=\sum_{(j_1,\dots,j_q)\in\mathcal{J}_q(d)}\sup_{y\in R(0;\eta)}\left(\prod_{p=1}^q\left|h_{\nu_p}(b_{j_p}+y_{j_p})-h_{\nu_p}(a_{j_p}+y_{j_p})\right|\right)\prod_{k:k\neq j_1,\dots,j_q}\left\{\Phi_1(b_k+y_k)-\Phi_1(a_k+y_k)\right\}\\
&\leq\sum_{(j_1,\dots,j_q)\in\mathcal{J}_q(d)}\sup_{y\in R(0;\eta)}\left(\prod_{p=1}^q\left(|h_{\nu_p}(b_{j_p}+y_{j_p})|+|h_{\nu_p}(a_{j_p}+y_{j_p})|\right)\right)\\
&\hphantom{\leq\sum_{(j_1,\dots,j_q)\in\mathcal{J}_q(d)}\sup_{y\in R(0;\eta)}}
\times\prod_{k:k\neq j_1,\dots,j_q}\left\{\Phi_1(b_k+y_k)+\Phi_1(-a_k-y_k)-1\right\},
\end{align*}
where we use the identity $1-\Phi_1(x)=\Phi_1(-x)$ to deduce the last line. 
Set $c_j:=(|a_j|\wedge|b_j|)\vee\eta$, $j=1,\dots,d$. Then, we have $\min\{|a_j+y_j|,|b_j+y_j|\}\geq c_j-\eta\geq0$ for all $j$. 
Thus, noting that $\Phi$ is increasing and bounded by 1, we obtain by Lemma \ref{l2-1} 
\begin{align*}
I_A
&\leq2^q\sum_{(j_1,\dots,j_q)\in\mathcal{J}_q(d)}\left(\prod_{p=1}^q\tilde h_{\nu_p}(c_{j_p}-\eta)\right)\prod_{k:k\neq j_1,\dots,j_q}\Phi_1(c_k+\eta)\\
&=2^q\sum_{(j_1,\dots,j_q)\in\mathcal{J}_q(d)}\left(\prod_{p=1}^q\frac{\tilde h_{\nu_p}(c_{j_p}-\eta)}{\Phi_1(c_{j_p}-\eta)}\frac{\Phi_1(c_{j_p}-\eta)}{\Phi_1(c_{j_p}+\eta)}\right)\prod_{k=1}^d\Phi_1(c_k+\eta)\\
&\leq2^q\left(\prod_{p=1}^q\sum_{j_p=1}^d\frac{\tilde h_{\nu_p}(c_{j_p}-\eta)}{\Phi_1(c_{j_p}-\eta)}\frac{\Phi_1(c_{j_p}-\eta)}{\Phi_1(c_{j_p}+\eta)}\right)\prod_{k=1}^d\Phi_1(c_k+\eta).
\end{align*}
Now, since $\sum_p\nu_p=r$, the generalized AM-GM inequality yields 
\begin{align*}
I_A
&\leq2^q\sum_{p=1}^q\frac{\nu_p}{r}\left(\sum_{j_p=1}^d\frac{\tilde h_{\nu_p}(c_{j_p}-\eta)}{\Phi_1(c_{j_p}-\eta)}\frac{\Phi_1(c_{j_p}-\eta)}{\Phi_1(c_{j_p}+\eta)}\right)^{r/\nu_p}\prod_{k=1}^d\Phi_1(c_k+\eta)\\
&\leq C'_r\sum_{p=1}^q\left\{
\left(\sum_{j=1}^d\bar\phi_1(c_{j}-\eta)\Lambda(c_{j}-\eta)\right)^{r/\nu_p}\right.\\
&\left.\qquad\qquad+\left(\sum_{j=1}^d\bar h_{\nu_p}(c_{j}-\eta)\Lambda(c_{j}-\eta)1_{(u_{\nu_p},\infty)}(c_{j}-\eta)\right)^{r/\nu_p}\right\}\prod_{k=1}^d\Phi_1((c_k-\eta)+2\eta)\\
&\leq C'_r\sum_{p=1}^q\left\{
\sup_{x\in[0,\infty)^d}F_{1}(x)^{r/\nu_p}G(x)
+\sup_{x\in[u_{\nu_p},\infty)^d}F_{\nu_p}(x)^{r/\nu_p}G(x)
\right\},
\end{align*}
where $C'_r>0$ depends only on $r$ (note that $\bar h_\nu$ is positive on $[u_\nu,\infty)$). Consequently, by Lemma \ref{l2-3} we conclude 
$\sup_{A\in\mathcal{R}}I_A=O((\log d)^{r/2})$ as $d\to\infty$. 
\end{proof}

\subsection{Proof of Proposition \ref{p2}}
We follow the proof of Proposition 1.4 of \cite{Zh18}.
Let $A\in \mathcal{R}$ be a given hyperrectangle. For a parameter $\epsilon$ to be specified later, define
\be{
A^\epsilon=\left\{ x\in \mathbb{R}^d: \inf_{a\in A} |x-a|\leq \epsilon   \right\},
}
\be{
A_\epsilon=\left\{ x\in \mathbb{R}^d: \inf_{a\in \mathbb{R}^d\backslash A} |x-a|\geq \epsilon   \right\}.
}
Applying the Gaussian anti-concentration inequality in Lemma~\ref{l1} with $Y=(Z^\top, -Z^\top)^\top$ gives
\be{
P(Z\in A^\epsilon \backslash A)\leq C \epsilon \sqrt{\log d},\  \text{and}\ P(Z\in A \backslash A_\epsilon)\leq C \epsilon \sqrt{\log d}.
}
We may regard $T$ as being coupled to $Z$ so that $E[|T-Z|^2]=\mathcal{W}_2(T, Z)^2$.
Then
\bes{
P(T\in A)&\leq P(|T-Z|\leq \epsilon, T\in A)+P(|T-Z|>\epsilon)\\
&\leq P(Z\in A^\epsilon) +\epsilon^{-2} \mathcal{W}_2(T, Z)^2\\
&\leq P(Z\in A)+C \epsilon \sqrt{\log d} +\epsilon^{-2} \mathcal{W}_2(T, Z)^2.
}
Similarly, 
\bes{
P(Z\in A)&\leq P(Z\in A_\epsilon)+C\epsilon \sqrt{\log d}\\
&\leq P(|T-Z|\leq \epsilon, Z\in A_\epsilon)+P(|T-Z|>\epsilon)+C\epsilon \sqrt{\log d}\\
&\leq P(T\in A)+\epsilon^{-2} \mathcal{W}_2(T, Z)^2+C\epsilon \sqrt{\log d}.
}
Thus,
\be{
|P(T\in A)-P(Z\in A)|\leq \epsilon^{-2} \mathcal{W}_2(T, Z)^2+C\epsilon \sqrt{\log d},
}
and taking $\epsilon=\frac{W_2(T, Z)^{2/3}}{(\log d)^{1/6}}$ gives the result.

\section*{Acknowledgements}

We thank the two anonymous referees for their careful reading of the manuscript and for their valuable suggestions which led to many improvements.
Fang X. was partially supported by Hong Kong RGC ECS 24301617 and GRF 14302418 and 14304917, a CUHK direct grant and a CUHK start-up grant. 
Koike Y. was partially supported by JST CREST Grant Number JPMJCR14D7 and JSPS KAKENHI Grant Numbers JP17H01100, JP18H00836, JP19K13668.


\end{document}